\documentclass[11pt]{article}
\usepackage{amsmath, amssymb, theorem, latexsym, epsfig, graphics, eufrak}
\usepackage{subfigure}
\numberwithin{equation}{section}

\theoremstyle{plain}
\theorembodyfont{\itshape}
\newtheorem{theorem}{Theorem}[section]
\newtheorem{proposition}[theorem]{Proposition}
\newtheorem{lemma}[theorem]{Lemma}
\newtheorem{corollary}[theorem]{Corollary}
                                                                                
\theorembodyfont{\rmfamily}
\newtheorem{definition}[theorem]{Definition}
\newtheorem{claim}[theorem]{Claim}
\newtheorem{example}[theorem]{Example}
\newtheorem{remark}[theorem]{Remark}

\newenvironment{proof}{{\noindent \textbf{Proof}\,\,}}{\hspace*{\fill}$\Box$\medskip}
\def\la{\lambda}

\def\cc{\mathbb C}

\def\rr{\mathbb R}

\def\zz{\mathbb Z}
\def\mco{\mathcal O}
\def\La{\Lambda}
\def\mo{\operatorname{mod}}

\def\diag{\operatorname{diag}}
\def\sign{\operatorname{sign}}
\def\mcl{\mathcal L}
\def\rp{\mathbb{RP}}
\def\mcs{\mathcal S}

\def\mch{\mathcal H}

\def\nn{\mathbb N}

\def\mcp{\mathcal P}
\def\wh#1{\widehat#1}

\def\var{\varepsilon}
\def\mcr{\mathcal R}

\def\wt#1{\widetilde#1}

\def\tt{\mathbb T}

\title{On phase-lock area parquet in a special slow-fast limit of  model of Josephson junction}
\author{Alexey Glutsyuk\thanks{Higher School of Modern Mathematics MIPT, Moscow, Russia}
\thanks{HSE University, Moscow, Russia}
\thanks{CNRS, UMR 5669 (UMPA, ENS de Lyon), Lyon, France}\thanks{The research is supported by the MSHE RF GZ project and by grant No. 24-7-1-15-1 of the
Theoretical Physics and Mathematics Advancement Foundation “BASIS”.}}
\begin{document}
\maketitle
\begin{abstract} B.Josephson (Nobel 
Prize, 1973) predicted a {\it tunnelling effect} for  a system of two superconductors 
separated by a  narrow dielectric (such a system is called {\it Josephson junction}): existence of a supercurrent through it and equations governing it. 
The overdamped Josephson junction is modeled by the family of differential equations  on the torus $\tt^2=\rr^2\slash2\pi\zz^2$, $\frac{d\theta}{d\tau}=\frac1{\omega}(\cos\theta+B+A\cos\tau)$,  which is known as the {\it RSJ model.} It depends on three parameters: $B$ called the abscissa, $A$ called the ordinate, and a fixed frequency $\omega$. We study its 
{\it rotation number} $\rho(B,A;\omega)$ 
as a function of  $(B,A)$ and the {\it phase-lock areas:} those its level subsets that have non-empty interiors. They  exist only for integer values of the rotation number (Buchstaber, Karpov, Tertychnyi). In this paper we study asymptotics of the phase-lock area portrait in a special slow-fast limit, as $\omega\to0$ and $(B,A)\to(0,1)$ so that $(B,A)-(0,1)=O(\omega)$. We show that in the rescaled parameters $\ell:=\frac B{\omega}$ and $u:=\frac{A-1}{\omega}$ the phase-lock area portrait converges to a parquet with 
boundary lines being parallel to the lines $\{ u\pm\ell=0\}$. Namely, the limit of the phase-lock area with rotation number $r$ 
 is the union of an infinite chain of squares going up with integer vertices, having diagonals of length two lying on the line $\{\ell=r\}$,  and an infinite strip going down (sector in the case, when $r=0$). We state and prove a generalization of this result to a wide class of slow-fast systems on 2-torus. 
 \end{abstract} 
 \tableofcontents
\section{Introduction}
\subsection{RSJ Model of Josephson junction as a special slow-fast system. Main results}
The tunnelling effect predicted by B.Josephson in 1962 \cite{josephson} (Nobel 
Prize 1973) deals with a {\it Josephson junction:} a system of two superconductors 
separated by a  narrow dielectric. It states existence of a supercurrent through it and yields  equations governing it. It was confirmed experimentally by P.W.Anderson and J.M.Rowell in 1963 \cite{ar}. 

The RJS model of  {\it overdamped Josephson junction},  
see \cite{stewart, mcc,  lev,  schmidt}, \cite[p. 306]{bar}, \cite[pp. 337--340]{lich}, 
\cite[p.193]{lich-rus}, \cite[p. 88]{likh-ulr} is the family of nonlinear differential equations
 \begin{equation}\frac{d\phi}{dt}=-\sin \phi + B + A \cos\omega t, \ \omega>0, \ B\geq0.\label{joso}\end{equation}
 Here $\phi$ is the  difference of phases (arguments) of the complex-valued 
 wave functions describing the quantum mechanic 
 states of the two superconductors. Its derivative is 
 equal to the voltage up to known constant factor.  
   
Equations (\ref{joso}) also arise in several models in physics, mechanics and geometry, e.g.,  
in  planimeters, see  \cite{Foote, foott}. 
 
 V.M.Buchstaber, O.V.Karpov and S.I.Tertychnyi suggested to present (\ref{joso}) as a dynamical system family on the 2-torus $\mathbb T^2=S^1\times S^1=\rr^2_{\theta,\tau}\slash2\pi\zz^2$. Namely,  the variable 
 change 
\begin{equation}\tau:=\omega t, \ \theta:=\phi+\frac{\pi}2\label{elmu}\end{equation}
 transforms (\ref{jos}) to a non-autonomous ordinary differential equation on $\tt^2$:  
\begin{equation} \frac{d\theta}{d\tau}=\frac1{\omega}(\cos\theta + B + A \cos \tau).\label{jostor}\end{equation}
The graphs of its solutions are the orbits of the vector field
\begin{equation}\begin{cases}\dot\theta=\cos\theta+B+A\cos\tau\\
\dot\tau=\omega.\end{cases}\label{jos}\end{equation}
on $\mathbb T^2$. The {\it rotation number} of its flow, see \cite[p. 104]{arn},  is a function $\rho(B,A)$ of parameters\footnote{There is a misprint, 
missing $2\pi$ in the denominator, in analogous formulas in previous papers of the 
 author  with co-authors: \cite[formula (2.2)]{4}, \cite[the formula after (1.16)]{bg2}.}:
$$\rho(B,A;\omega)=\lim_{\tau\to+\infty}\frac{\theta(\tau)}{\tau}.$$
Here $\theta(\tau)$ is a general $\rr$-valued solution of  equation  (\ref{jostor}), which depends 
on the initial condition for $\tau=0$. Recall that the rotation number exists and is independent on the choice of the initial condition, see \cite[p.104]{arn}. 
The rotation number modulo $\zz$ is known to be equal to the rotation number of the {\it Poincar\'e first return map} 
\begin{equation}h: S^1_{\theta}\times\{0\}\to S^1_{\theta}\times\{0\},\label{poincmap}\end{equation}
 which sends an initial condition $(\theta_0,0)$ to the next intersection 
point of the corresponding phase curve of (\ref{jos}) with the cross-section $S^1_\theta\times\{0\}$. It is given by the time 
$\frac{2\pi}{\omega}$ flow map of the field (\ref{jos}). 

 The parameter $B$ is called {\it abscissa,}  $A$ is called the {\it ordinate,} $\omega$ is called {\it frequency.} 
 Recall the following well-known definition. 

\begin{definition} \label{defasl} (cf. \cite[definition 1.1]{4}) The {\it $r$-th planar phase-lock area} is the level set 
$$L_r=L_r(\omega)=\{(B,A)\in\rr^2 \ | \ \rho(B,A;\omega)=r\}\subset\rr^2_{B,A},$$ 
provided that it has a non-empty interior. 
\end{definition}
Phase-lock areas of family (\ref{jos}) were studied by V.M.Buchstaber, O.V.Karpov, S.I.Tertychnyi, 
Yu.P.Bibilo, the author et al, see \cite{bibgl, bibgl2}, \cite{bg}--\cite{bt1}, \cite{4}--\cite{gn19}, 
\cite{LSh2009, IRF, krs, RK},  \cite{tert, tert2} and references therein.  The  following  results are known and proved mathematically:

1) The {\it rotation number quantization effect}  \cite{buch2}: phase-lock areas exist only for integer rotation number  values.

2) The boundary of each $L_r(\omega)$
 consists of two  graphs of analytic functions $L_{r,\alpha}=\{ B=G_{r,\alpha}(A)\}$, $\alpha=0,\pi$, see \cite{buch1}. Each $L_{r,\alpha}$ consists of those parameter values for which the Poincar\'e map (\ref{poincmap}) fixes the point $(\alpha,0)$.  
This fact was later explained by A.V.Klimenko via symmetry, see \cite{RK}.

3)  The functions $G_{r,\alpha}(A)$ have Bessel asymptotics, as $A\to\infty$. This was  
 observed and proved on physics level in ~\cite{shap}, see also \cite[p. 338]{lich},
 \cite[section 11.1]{bar}, ~\cite{buch2006}, and proved mathematically in ~\cite{RK}.
 
 4) Each planar phase-lock area is a garland  of infinitely many bounded domains going to infinity in the vertical direction,  separated by  points of intersection $L_{r,0}\cap L_{r,\pi}$, see \cite{RK}. Those  separation 
 points that lie on the horizontal $B$-axis, namely $A=0$, are the so-called
 {\it growth points} with $B=\sign r\sqrt{r^2\omega^2+1}$, see \cite[corollary 3]{buch1}. The other separation points, which  lie outside the horizontal $B$-axis, are called the  {\it constrictions}. 
 
 5)  For every $r\in\zz$ and $\omega>0$ the $r$-th planar phase-lock area $L_r(\omega)$ is symmetric to the $-r$-th one with respect to the vertical $A$-axis and is symmetric to itself with respect to the $B$-axis. See Figure 1 below. 
 \begin{figure}[ht]
  \begin{center}
  \vspace{-0.3cm}
   \epsfig{file=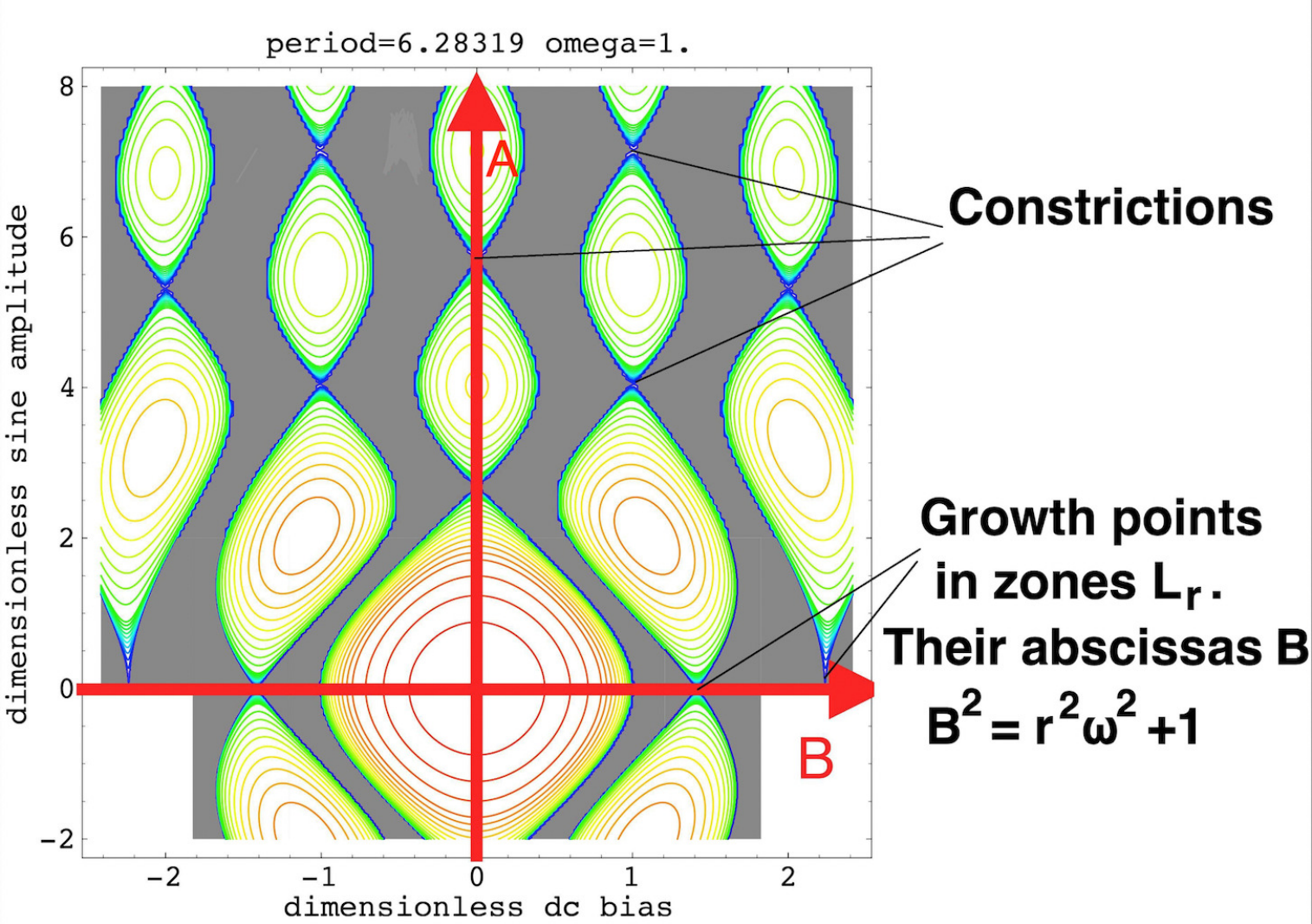, width=18em}
   \hspace{1.8em}
   \epsfig{file=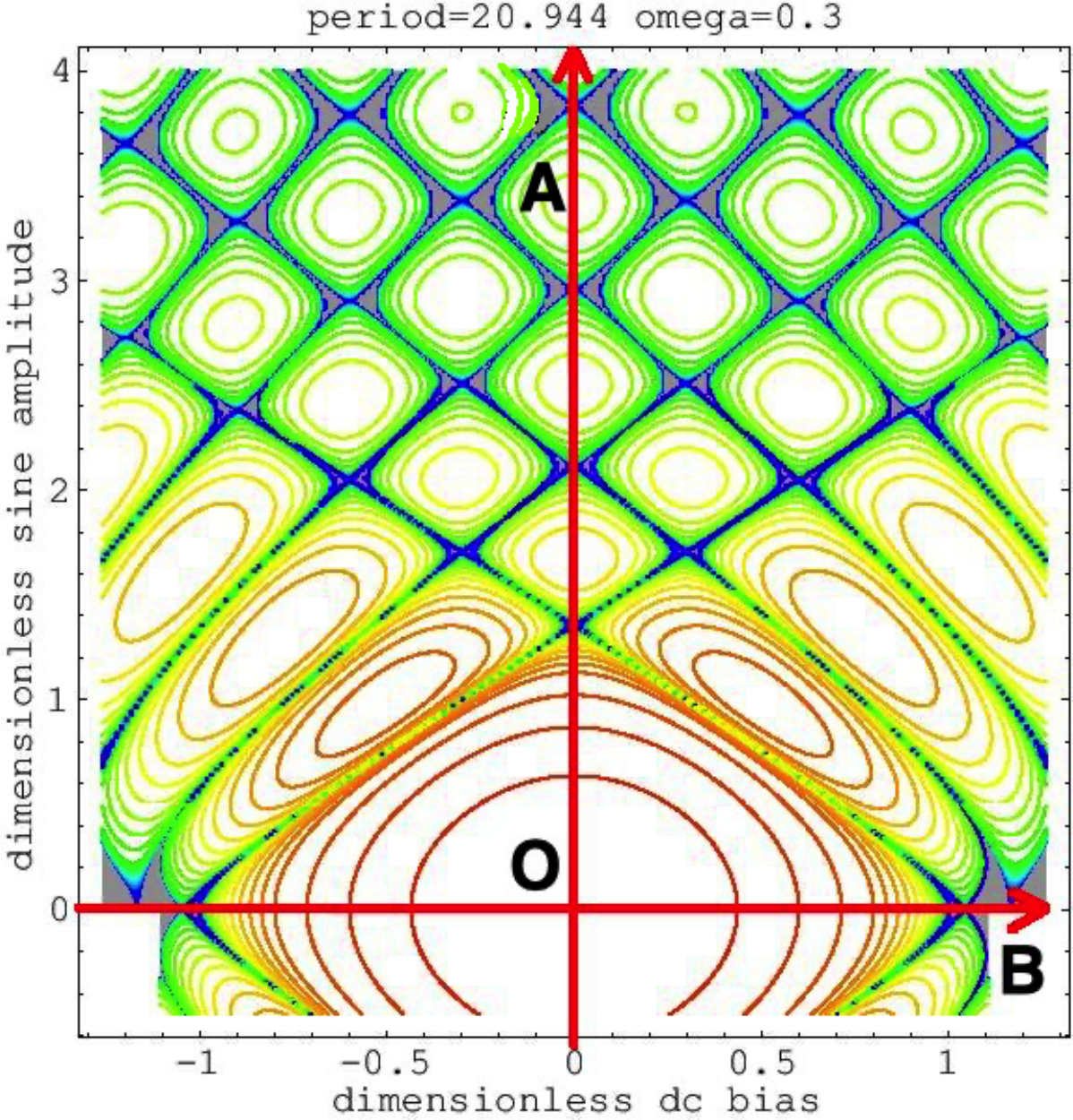, width=12em}
   \vspace{-0.2cm}
    \caption{Phase-lock areas and their constrictions for $\omega=1$ on the left and $\omega=0.3$ on the right. The abscissa is $B$, the ordinate is $A$. Figures 
    taken from papers \cite[fig. 1b,d,e)]{bg2}, \cite[p. 331]{bt1} with authors' permission, with coordinate axes added.}
    \vspace{-0.2cm}
  \end{center}
\end{figure}

6) In each planar phase-lock area $L_r(\omega)$ all its  constrictions lie in the same 
vertical line $\Lambda_r:=\{ B=r\omega\}$, see \cite[theorem 1.4]{bibgl}.

The right picture in Fig. 1 is the numerical phase-lock area picture for $\omega=0.3$ obtained by V.M.Buchstaber, O.V.Karpov and S.I.Tertychnyi \cite{buch2}. It suggest that phase-lock areas may limit to a parquet structure, as $\omega\to0$. 

\smallskip

{\bf Question} (V.M.Buchstaber, 2010) {\it Study the asymptotics of phase-lock areas, as $\omega\to0$.} 
\smallskip

In the present paper we prove the next theorem, which is a first result  on explicit asymptotics of the phase-lock areas. We consider the case, when  $B$ and $A$ depend on  $\omega$ so that 
 \begin{equation}(B,A)\to(0,1), \ \ \ (B,A)-(0,1)=O(\omega), \ \text{ as } \omega\to0.\label{asab}\end{equation}
We work  in the rescaled parameters
\begin{equation}\ell:=\frac B\omega, \ \ \ u:=\frac{A-1}\omega.\label{ell-u}\end{equation}
Everywhere below by $L_r$ we denote the phase-lock area with the rotation number $r$ in the rescaled parameter 
plane $\rr^2_{\ell,u}$. Set 
\begin{equation}Z_m:=(2m-1,2m+1) \text{ for } m\in\nn, \ \ Z_0:=(-\infty,1).\label{zmz0}\end{equation}
\begin{theorem} \label{thm1}\footnote{Theorem \ref{thm1} with a brief proof was announced in the author's talk at Subtropical International Workshop on Dynamical Systems and Analytic Differential Equations.  Shenzhen, China, April 20-24, 2026, see https://math.sustech.edu.cn/conference/13603.html?lang=en}
\footnote{After the author obtained the results of this paper, Artem Alexandrov presented first asymptotic term in $\omega$ of  the phase-lock area boundaries in the $(B,A)$-plane at points far from $(0,1)$ in terms of elliptic integrals in his preprint \cite{alduck} very recently posted to arxiv. He used a completely different method based on reduction to Riccati equations, quantum mechanics, complex analytic extension of the slow curve as an elliptic curve and WKB arguments. The results of the present paper and method of proof, though valid only near the point $(0,1)$ in the case of the RSJ model (\ref{jos2}), are valid for a wide class of slow-fast systems, which are neither analytic, nor reducible to Riccati equations. See Theorem \ref{thm2} generalizing Theorem \ref{thm1}.}
1) For every $r\in\zz$ 
the phase-lock area $L_r=L_r(\omega)$ converges\footnote{Here and in Theorem \ref{thm2} convergence of subsets $M_\omega=L_r(\omega)$ in the parameter space means that both intersections of $M_\omega$ and of its complement 
 with the closure  of every bounded domain $U$ converge in the sense of Hausdorff to the intersection of the closure $\overline U$ with  the limit in question and with its complement respectively.} 
to the closure of the union 
\begin{equation}L_r^0:=\cup_{k, k+r\in\zz_{\geq0}}\La_{k,r}, \ \La_{k,r}:=\{ u+\ell\in Z_{k+r}, \ u-\ell\in Z_k\}\subset\rr_{\ell,u}.\label{lr0}\end{equation}
The domains $\La_{k,r}$ are marked by $r$ at Figure 2.   Each $\La_{k,r}$ is

- a square  for $k, k+r\geq1$; 

- a half-strip if exactly one of the numbers $k$, $k+r$ is zero; 

- the  sector   $\{ u\pm\ell\leq1\}$ with vertex $(1,1)$, if $k=r=0$. 

2) The vertices  of the limit domain $L^0_r$ lying in the line $\{ \ell=r\}$  are exactly limits of  constrictions of the phase-lock area $L_r(\omega)$. 
\end{theorem}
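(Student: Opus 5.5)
Write $B=\omega\ell$, $A=1+\omega u$. The field (\ref{jos}) is then a slow--fast system $\omega\,d\theta/d\tau=\cos\theta+\cos\tau+\omega(\ell+u\cos\tau)$. Its slow curve $\{\cos\theta+\cos\tau=0\}$ is the union of the two lines $\theta=\pi\pm\tau$. These lines cross transversally at the two points $(\theta,\tau)=(\pi,0)$ and $(0,\pi)$ mod $2\pi$. Away from these crossings, each branch is a normally hyperbolic invariant curve, attracting or repelling according to the sign of $\partial_\theta(\cos\theta+\cos\tau)=-\sin\theta$. So trajectories over one period follow an attracting branch and must choose which branch to follow at each crossing. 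The rotation number is the net number of full turns in $\theta$ accumulated this way.

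The plan is to reduce the Poincaré map to a combinatorial transition rule at the two crossings. Near a crossing, say $(\pi,0)$, set $x=\theta-\pi$ and rescale $x=\sqrt\omega X$, $\tau=\sqrt\omega T$. The leading-order equation is
\[
dX/dT=\tfrac12(X^2-T^2)+(\ell+u\cos\tau)\big|_{\tau=0}=\tfrac12(X^2-T^2)+(\ell+u),
\]
which is a Riccati (Weber-type) equation. By the standard analysis of a canard passing a transcritical-type crossing, the attracting solution coming from $T=-\infty$ exits either along $X=-T$ or along $X=+T$, or escapes to $X=+\infty$ (one extra turn, since $\theta$ jumps). Which case occurs, and how many extra windings are added, depends only on the parameter $c=\ell+u$. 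The thresholds are determined by the exactly solvable parabolic-cylinder problem, whose special values are odd integers $2m\pm1$; this produces the intervals $Z_m$ for $u+\ell$. The same analysis at $(0,\pi)$ involves the combination $\ell-u$, because $\cos\tau=-1$ there, which gives the condition on $u-\ell$. In this way the windings $k+r$ acquired at one crossing and $k$ at the other are governed independently by $u+\ell\in Z_{k+r}$ and $u-\ell\in Z_k$. Summing the windings over one period gives $\rho=r$ on $\La_{k,r}$, and the Poincaré map is a contraction there, which yields an open phase-lock area. On the boundary lines $u\pm\ell\in2\zz+1$ the transition is not hyperbolic.

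To get Hausdorff convergence, I would prove that the reduced Poincaré map, computed by matching the inner Riccati solutions with the outer Fenichel slow manifolds, is $C^0$-close to the limiting combinatorial map uniformly on compact subsets of the open domains $\La_{k,r}$. On a neighborhood of each boundary line, the rotation number takes both adjacent values. Together these give convergence of the sets and of their complements.

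For part 2, the constrictions lie on $\{\ell=r\}$ by result 6 of the introduction ($B=r\omega$). At a constriction both boundary graphs $L_{r,0}$ and $L_{r,\pi}$ meet. In the limit this corresponds to both $u+\ell$ and $u-\ell$ being odd integers simultaneously, which is exactly a square vertex on $\ell=r$. Conversely, each such vertex is a transversal intersection of the limiting boundary lines, so by continuity it is a limit of intersection points $L_{r,0}\cap L_{r,\pi}$ off the $B$-axis, that is, of constrictions.

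The main obstacle is the inner analysis: the precise matching of the Weber equation asymptotics at the crossings, with uniform error bounds as $\omega\to0$. In particular one must show that the thresholds are exactly the odd integers and that the $O(\sqrt\omega)$ and $O(\omega)$ corrections do not shift them in the limit. I would first try explicit parabolic-cylinder asymptotics. The connection problem between $e^{\mp T^2/4}$-type solutions has a Stokes multiplier of the form $1/\Gamma\big(\tfrac{1-c}{2}\big)$, and its zeros at $c=1,3,5,\dots$ give the thresholds. I would then use monotonicity of the rotation number in $\ell$ to control the corrections.
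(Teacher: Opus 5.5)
\paragraph{Part 1.} This follows the paper's route. You use the Riccati limit $dx/dy=x^2-y^2+c_\pm$ with $c_\pm=u\pm\ell$ at the two crossings, count each real pole of the stable solution $x_-$ as one full turn, get $m$ poles for $c\in Z_m$, and obtain net rotation $(k+r)-k$. Three steps need more than you indicate.

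First, the hard step is less the Stokes data than a reinjection statement. After $x_-$ hits a pole near $(\pi,0)$, the true orbit must enter the inner region of $(3\pi,0)$ with a $\tau$-increment $o(\sqrt\omega)$. Only then does it continue along the \emph{same} Riccati solution past its pole. The paper gets an increment $O(\sqrt\omega/R)$ from the bound $f>\sigma_1\theta^2$ on a sector.

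Second, the zeros of $1/\Gamma(\tfrac{1-c}{2})$ locate the thresholds but do not count the poles. The count needs an oscillation or continuity argument; the paper deforms from the Hermite case $c=2m+1$ using monotonicity in $c$.

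Third, for the exclusion half of the Hausdorff convergence, ``the rotation number takes both adjacent values'' is not what is needed. You must show that $L_r(\omega)$ does not accumulate on edges or vertices of the parquet that are not adjacent to $L^0_r$. This follows from monotonicity of $\rho$ in $\ell$ along short horizontal segments whose endpoints lie in limit domains with rotation numbers $r_1,r_2$, where $r\notin[r_1,r_2]$.

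\paragraph{Part 2: the genuine gap.} The converse step does not follow from Part 1. You argue that each vertex is a transversal intersection of the limiting boundary lines, so by continuity it is a limit of points of $L_{r,0}\cap L_{r,\pi}$. But Hausdorff convergence of $L_r(\omega)$ and its complement is equally consistent with $L_r(\omega)$ having a thin neck at $X=(r,u_0)$. In that case one boundary graph converges to the left zigzag $\ell=r-|u-u_0|$, the other to the right zigzag $\ell=r+|u-u_0|$, and they never cross. Nothing in your argument determines which limiting edge comes from $L_{r,0}$ and which from $L_{r,\pi}$, and that is exactly what ``transversality of the limits'' presupposes. The paper flags this squeezing alternative explicitly.

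\paragraph{The missing idea.} Track the attracting fixed point of the Poincar\'e map.
\begin{itemize}
\item By Klimenko's symmetry $(\theta,\tau)\mapsto(-\theta,-\tau)$, a fixed point at $0$ or $\pi$ forces the map to be parabolic or the identity. So such a parameter lies on $\partial L_r(\omega)$.
\item At the vertex, $c_+$ is an odd integer, so the limiting orbit is the Hermite heteroclinic solution. It meets $y=0$ at $x=0$ or $x=\infty$.
\item For $u=u_0\pm\varepsilon$, monotonicity in $c$ pushes this crossing to opposite sides. Hence the attractors lie in opposite half-circles $(0,\pm\pi)$.
\item The intermediate value theorem on $\{\ell=r\}$ then yields a boundary point $(r,u(\omega))$ close to $X$.
\item To conclude it is a constriction, you still need \cite[lemma 5.1]{bibgl}: the ray $\{\ell=r,\ u>r-\tfrac12\}$ lies in $L_r(\omega)$, and its only boundary points are constrictions.
\end{itemize}

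\paragraph{Forward direction.} ``Both $u\pm\ell$ odd'' is not the right characterization. On $\{\ell=r\}$ it holds as soon as one of them is odd. It also holds at non-vertices such as $(r,r-1)$, where $u-\ell=-1$ is not a threshold. Instead, place limits of constrictions in $\partial L^0_r\cap\{\ell=r\}$ using Part 1 together with result 6.
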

 \begin{figure}[ht]
  \begin{center}
   \epsfig{file=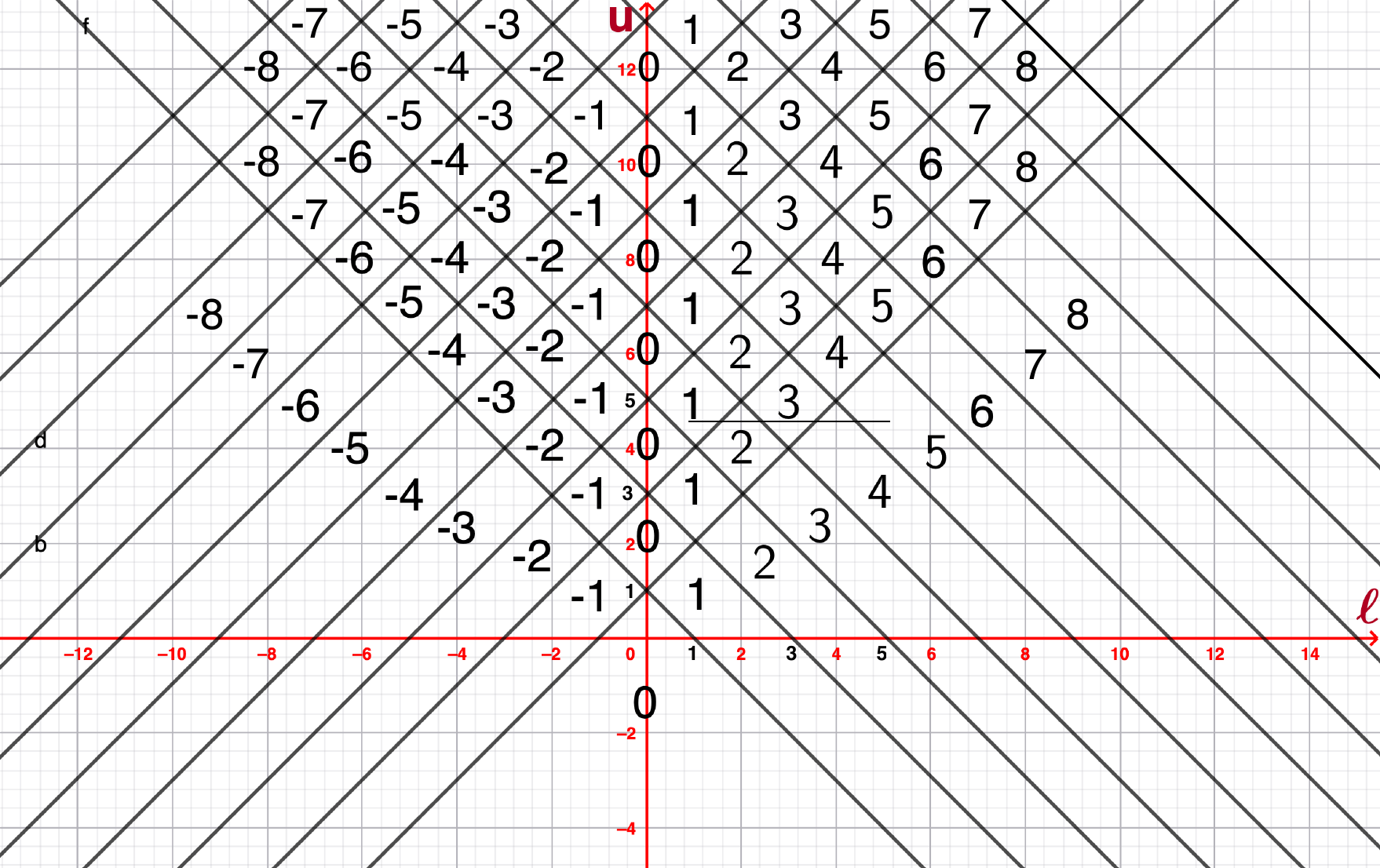, width=21em}
    \caption{Limit phase-lock areas.}
  \end{center}
\end{figure} 

\begin{remark} The statement of Theorem \ref{thm1}  is difficult to observe numerically. 
As $\omega$ is small,  some numerical experience artefacts arise and distort the pictures. 
The following numerical phase-lock area pictures for small $\omega$ made by Artem Alexandrov, see Fig. 3, seem to be among the best and more precise ones up to now. Even there one can see that most of constrictions do not come close enough to the corresponding limit parquet vertices. 
\end{remark}
\begin{figure}[ht]
  \begin{center}
   \epsfig{file=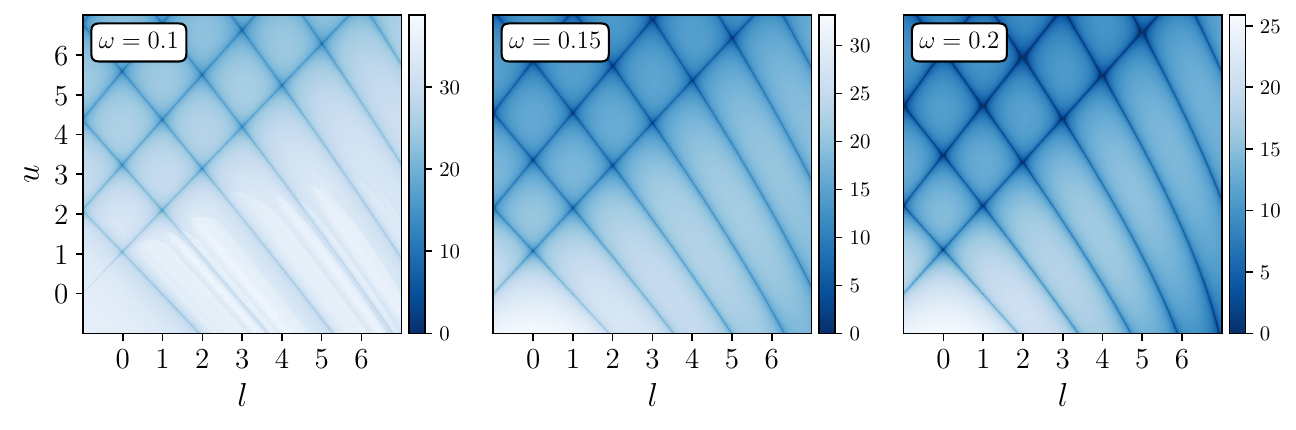, width=30em}
    \caption{Artem Alexandrov's numerical pictures for small $\omega$.}
  \end{center}
\end{figure} 
\begin{remark} \label{hist1} 
Asymptotics $A=1+\omega+o(\omega)$ of the lower constriction in the intersection $L_0\cap\{ A>0\}$ was early  found by M.J.Renne and D.Polder \cite{rp}. It was shown in \cite{krs} that as $\omega\to0$, the gaps between neighbor phase-lock areas converge to zero exponentially. It was observed in recent physical experiments \cite{stol} with a modified version of the RSJ model that includes a  half-harmonic in $\theta$  that for  small effective frequency of exterior current forcing there is also a kind of parquet structure. 
\end{remark}
In Subsection 1.2 we present a more general Theorem \ref{thm2} for a large class of slow-fast systems on 2-torus. In Subsection 1.3 we give  sketch-proofs of Statement 1) of Theorem \ref{thm1} and of Theorem \ref{thm2}. In Subsection 1.4 we sketch proof of Statement 2) of Theorem \ref{thm1}. The proofs will be given in Section 2. 

\subsection{Generalization to slow-fast systems on two-torus with  two Morse critical points in the slow curve}

\begin{definition} \label{defclr} A critical point $z_0=(\theta_0,\tau_0)$ of a function $f(\theta,\tau)$ with critical value $f(z_0)=0$ 
is called {\it horizontally positive (negative) 
self-intersection},   if the Hessian form $Hf(z_0)$ of the function $f$ at $z_0$ is non-degenerate sign-indefinite and the partial derivative $\frac{\partial^2f(z_0)}{\partial\theta^2}$ is positive (respectively, negative). 
In this case the germ at $z_0$ of the level curve $C=\{ f=0\}$ consists of two transversely intersected regular germs of curves $C_{\mcl}=C_{\mcl}(z_0)$ and $C_\mcr=C_\mcr(z_0)$ that are both transversal to the horizontal line $\{\tau=\tau_0\}$. We consider their upper and lower parts numerated by $+$ and $-$ respectively:  
\begin{equation}C_{\mcl,\pm}:=C_{\mcl}\cap\{ \pm(\tau-\tau_0)>0\}, \ C_{\mcr,\pm}:=C_\mcr\cap\{\pm(\tau-\tau_0)>0\}.\label{clr}\end{equation}
We  name $C_{\mcl}$, $C_\mcr$ so that $C_{\mcl,+}$ lies on the left from  $C_{\mcr,+}$, thus 
$C_{\mcr,-}$ lies on the left from $C_{\mcl,-}$. 
The germs $C_{\mcl,+}$, $C_{\mcr,+}$ are called  {\it left and right upward branches} of the curve $C$ at $z_0$. The germs $C_{\mcr,-}$, $C_{\mcl,-}$ are called its  {\it left and right downward branches}. See Fig. 4. 
\end{definition}
\begin{figure}[ht] 
  \begin{center}
  \vspace{-1cm}
 \epsfig{file=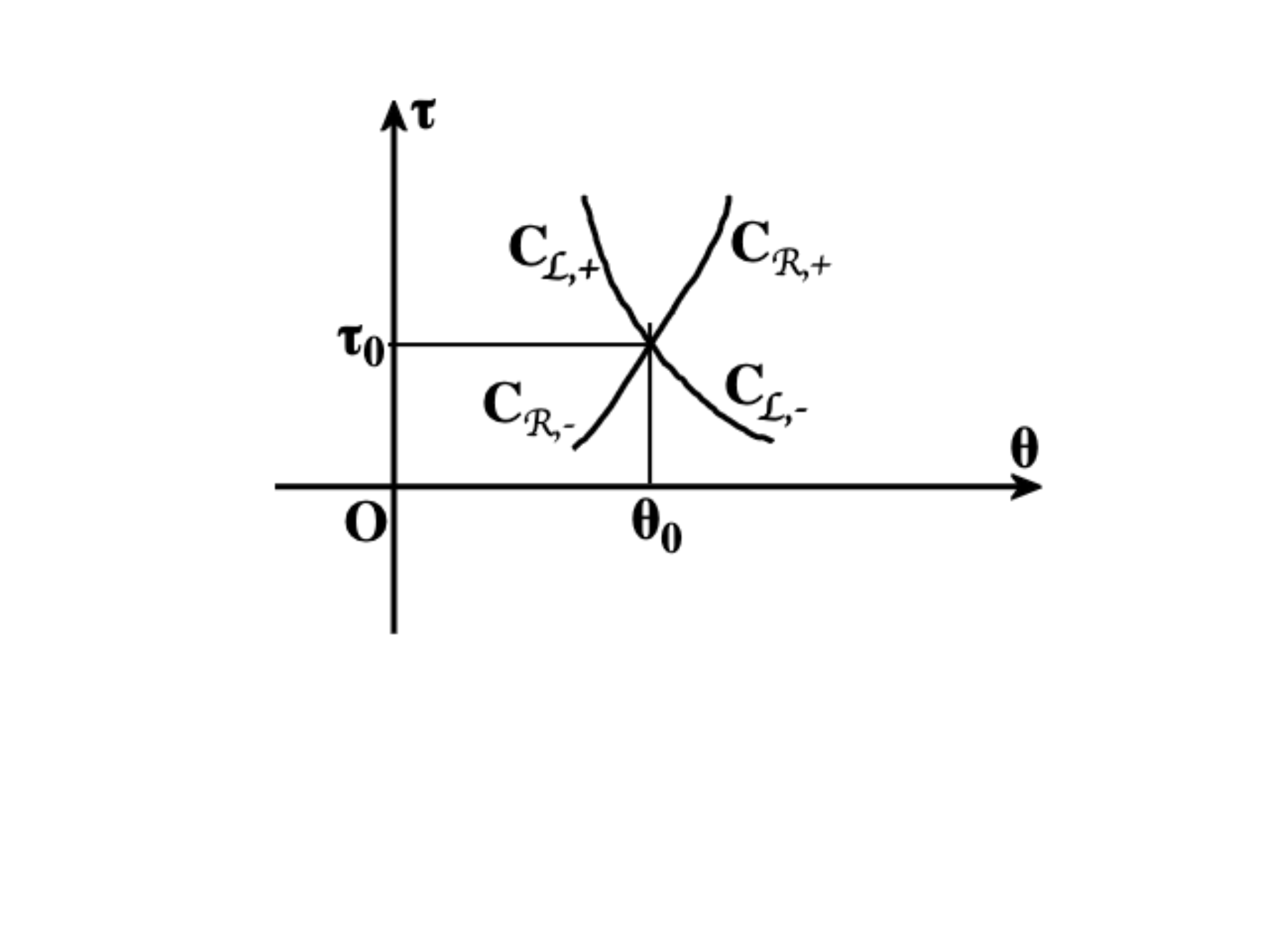, width=23em}
 \vspace{-1.8cm}
\caption{The branches $C_{\mcl,\pm}$, $C_{\mcr,\pm}$.} 
  \end{center}
\end{figure}
\begin{definition} \label{horpos} Consider a family  $f_{s;\omega}(\theta,\tau)$ of functions on a neighborhood of the origin $O=(0,0)$ depending on a parameter $s$ lying in a domain $U\subset\rr^n$ and $\omega\in(-\delta,\delta)$, continuous  in 
$(s,\omega,\theta,\tau)$  with continuous derivatives up to order 2 in $(\theta,\tau)$ and up to order 1 in $\omega$: 
continuous in $(s,\omega,\theta,\tau)$. We 
consider that for every $s\in U$ one has $f_{s;0}(O)=0$ and $O$ is a horizontally positive (negative) self-intersection. 
Then we say that its {\it germ at $O$ is horizontally positive (negative),} and $O$ is {\it right (left) flow point} for the corresponding system 
$$\dot\theta=f_{s;\omega}(\theta,\tau), \ \ \dot\tau=\omega.$$
\end{definition}
To each horizontally positive (negative) germ $f_{s;\omega}$ we associate a number $c=c(s)$ defined as follows, where 
in $\pm$ the signs "$+$", "$-$" correspond to horizontally positive, respectively negative germ. 
The Hessian form of the function $f_{s;0}$, which is given by its quadratic terms, takes the form 
\begin{equation}Hf_{s;0}(d\theta,d\tau)=\pm\frac12(a_{11}d\theta^2+2a_{12}d\theta d\tau+a_{22}d\tau^2),\label{hesf}
\end{equation}
\begin{equation}a_{11}=\pm\frac{\partial^2f_{s;0}(O)}{\partial\theta^2}>0, \ \ \ 
\Delta:=a_{12}^2-a_{11}a_{22}>0.\label{hfelu2}\end{equation}
Set
$$\nu:=\pm\frac{\partial f_{s;\omega}(O)}{\partial\omega}|_{\omega=0},$$
\begin{equation} c=c(s)=c_{\pm}(s):=(\pm a_{12}+\nu a_{11})\Delta^{-\frac12}.\label{defc}\end{equation}
Here we deal with a family of dynamical systems on torus $\tt^2_{\theta,\tau}$ of the type 
\begin{equation}\begin{cases}\dot\theta=f_{s;\omega}(\theta,\tau)\\
\dot\tau=\omega,\end{cases}\label{jos2g}\end{equation}
depending on $(s;\omega)$, where $s$ lies in a connected domain $U\subset\rr^n$, with continuous derivatives as in the above definition. Thus, 
$f_{s;\omega}(\theta,\tau)$ are $2\pi$-periodic in $\theta$ and $\tau$. We consider that the  unperturbed slow curve 
$$C_{s;0}:=\{ f_{s;0}(\theta,\tau)=0\} \subset\tt^2$$
satisfies the following conditions for every $s\in U$:

(i) It contains two critical points $z_{0}$, $w_0$ 
of the function $f_{s;0}$. The function family  $f_{\ell,u;\omega}$ has horizontally positive (negative) germs at $z_0$ ($w_0$). 

(ii) For every $\psi=z_0, w_0$ the  circle $\{\tau=\tau(\psi)\}$ intersects $C_{s;0}$ only at $\psi$. 

(iii) The projection $\pi_\tau:C_{s;0}\to S^1_\tau$ has a finite number, uniformly bounded in $s$, of critical points with pairwise distinct critical values that are (if any) all local minima or maxima.

(iv) There exists a closed  path $\gamma=\gamma_s\subset C_{s;0}$ starting at $z_{0}$, going along the left upward branch $C_{\mcl,+}(z_0)$
 of the curve $C_{s;0}$ 
at $z_0$, then arriving to $w_0$ along  $C_{\mcl,-}(w_0)$, then leaving $w_0$ along  $C_{\mcr,+}(w_0)$ and then closing up arriving to $z_0$ along $C_{\mcr,-}(z_0)$. The path $\gamma$ is homotopic to the circle $\{0\}\times S^1_{\tau}$. 

(v) The map $\sigma:s\mapsto(c_+(s), c_-(s))$ sending $s$ to the  values $c_+$, $c_-$, see (\ref{defc}), corresponding 
to the points $z_0$ and $w_0$ respectively is a submersion. 

(vi) The function $f_{s,\omega}(\theta,\tau)$ is monotonous in some $s_j$, say, $s_1$, for all $\omega$ 
small enough, and each $c_{\pm}(s)$ is locally non-constant as a function of the same variable $s_j$ with fixed other 
 values $s_i$, $i\neq j$.   
\begin{example} The function family 
\begin{equation}f_{\ell,u;\omega}(\theta,\tau)=\cos\theta+B+A\cos\tau, \ \ \ B=\ell\omega, \ \ A=1+u\omega,\label{fluo}\end{equation}
 see (\ref{ell-u}), satisfies the above assumptions (i)--(v), with the curve $C_{\ell,u;0}=\{\cos\theta+\cos\tau=0\}$ independent on $(\ell,u)$ and presented at Fig. 4. The right (left) flow points $z_0=(\pi,0)$ (respectively, $w_0=(0,\pi)$) are horizontally positive (negative) self-intersections. The path $\gamma$ lifted to $\rr^2_{\theta,\tau}$ consists of two straightline segments adjacent to $(0,\pi)$, connecting successive points $(\pi,0)$, $(0,\pi)$, $(\pi,2\pi)$. 
  The values $c=c_+$, $c=c_-$ given by (\ref{defc})  corresponding to the points $z_0$ and $w_0$ respectively 
can be taken as parameters and 
\begin{equation}u\pm\ell=c_{\pm}.\label{ulgen}
\end{equation}
 Formula (\ref{ulgen}) follows by straightforward calculation, see Subsection 2.1. 
\end{example}

In fact, we need a weaker condition than (iv), see the next theorem. 
To state it, let us recall the following well-known definition from slow-fast system theory. 
\begin{definition} An arc $\gamma$ of the unperturbed slow curve $C_{s;0}$ of a slow-fast system (\ref{jos2g}) is called {\it stable,} if it is diffeomorphically projected to 
an interval of the $\tau$-axis and the unperturbed horizontal field (\ref{jos2g}) is directed to $\gamma$ on its both sides: on its left and on its right.  Let now $C_{s;0}$ satisfy conditions (i)--(iii). A {\it singular orbit}  is a piecewise smooth oriented curve consisting of stable arcs of the curve $C_{s;0}$ and horizontal segments. The stable arcs are oriented up. Their upper endpoints belong to the union of critical points of the function 
$f_{s;0}$ and  local maxima of the projection $\pi_\tau$.  Each horizontal segment bounded by a local maximum and another point lying in $C_{s;0}$. It is oriented by the unperturbed horizontal vector field from the local maximum to its  other end,  
and its interior is disjoint from $C_{s;0}$.  See Fig. 5a),b).
\end{definition}
\begin{figure}[ht] 
  \begin{center}
  \vspace{-1cm}
 \epsfig{file=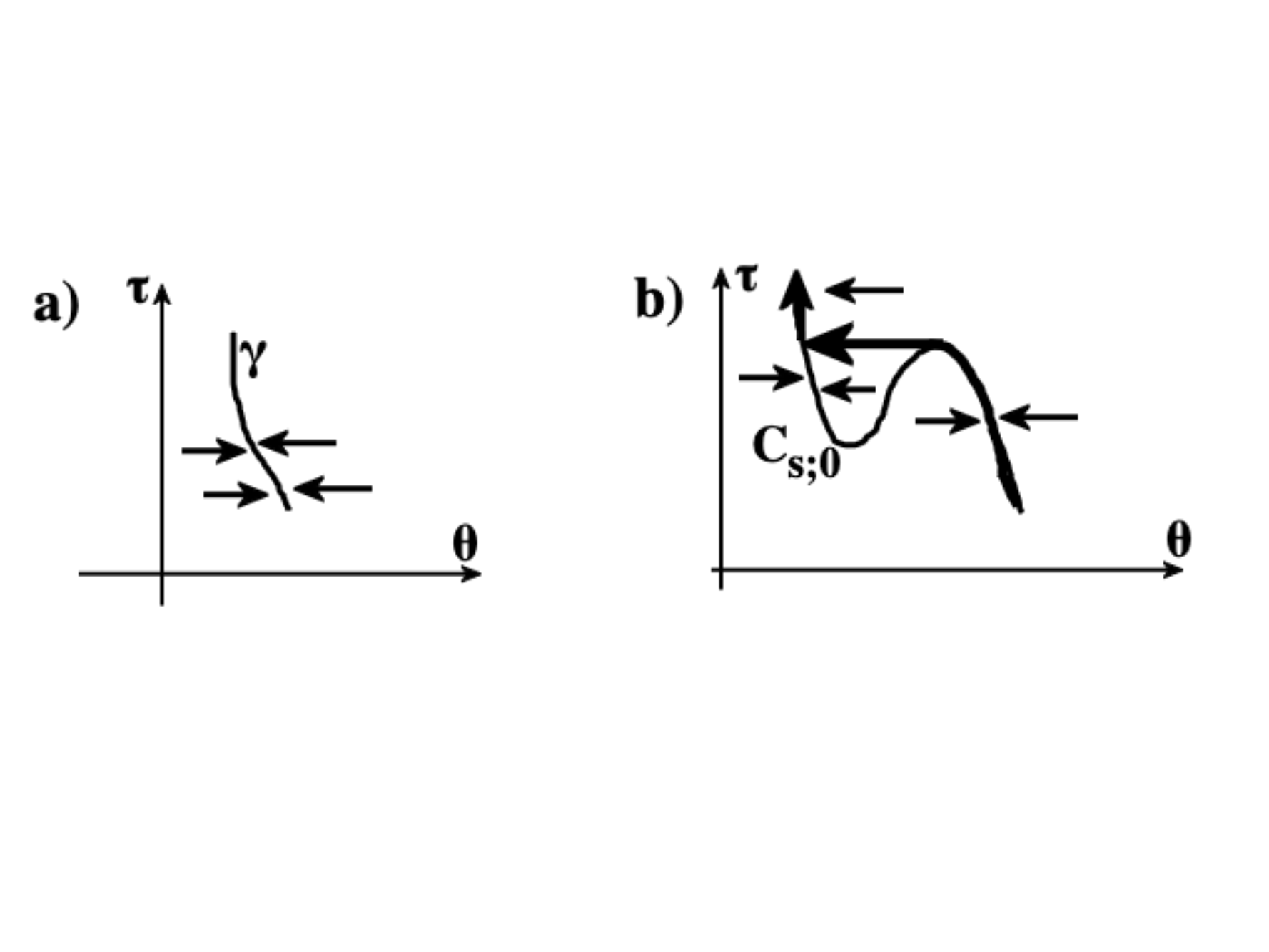, width=23em}
 \vspace{-1.5cm}
  \caption{a) A stable arc. b) A piece of singular orbit, marked in bold.} 
  \end{center}
\end{figure}
\begin{example} If  system (\ref{jos2g}) is horizontally positive (negative) at $z_0$, then the branches $C_{\mcr,-}$, $C_{\mcl,+}$ (respectively, $C_{\mcr,+}$, $C_{\mcl,-}$) of the curve $C_{s;0}$ are stable, and its two other branches aren't. If in   (iv)  $\gamma$ is bijectively projected to $S^1_{\tau}$, as e.g. in the case (\ref{fluo}), then $\gamma$ oriented up is a singular orbit.
\end{example}
\begin{remark} Let conditions (i)--(iii) hold. Then  a closed singular orbit exists, if and only if the curve $C_{s;0}$ is projected to the whole circle $S^1_{\tau}$. Then it is unique and depends continuously on $s$. 
Indeed, the condition $\pi_\tau(C_{s;0})=S^1_\tau$ is obviously necessary for existence of a closed graph. Now assuming it holds, let  us construct a closed singular orbit. Take the initial arc $C_{\mcl,+}(z_0)$ of the curve $C_{s;0}$ going up from the point $q_0=z_0$ and extend it up until it 
reaches a local maximum $p_0$ of the projection $\pi_\tau$. The arc $q_0p_0$ thus constructed is the initial   
arc of the singular orbit.   We consider the horizontal circle $L$ tangent to $C_{s;0}$ at $p_0$. The restriction of the field (\ref{jos2g}) with $\omega=0$ to a small punctured neighborhood of the point $p_0$ in $L$ is horizontal and  orients $L$ in the same way at all points of the neighborhood. The circle $L$ 
  intersects $C_{s;0}$ at some  points different from $p_0$. Take the segment $[p_0,q_1]$ of the circle $L$ bounded by the point $p_0$ and another point, denoted $q_1$, of intersection $L\cap C_{s;0}$, such that the interval $(p_0,q_1)$  is disjoint from 
  $C_{s;0}$ and  is oriented by the unperturbed horizontal field (\ref{jos2g}) with $\omega=0$ from $p_0$ to $q_1$. 
  The segment $[p_0,q_1]$ is the next arc of the singular orbit. There exists a unique stable arc of the curve $C_{s;0}$ adjacent to $q_1$ from above and oriented up. 
  Let us extend it to the next local maximum, denoted by $p_1$ and construct the point $q_2$ 
  as above etc.  Making this procedure   for all the local maxima $p_j$ finishes in a finite number of steps and  yields a singular orbit  $\wh\gamma=q_0p_0q_1p_1...$ finishing at the point $w_0$ by the stable arc $C_{\mcl,-}(w_0)$. Then we extend it further on by the arc $C_{\mcr,+}(w_0)$ and repeat the above procedure for the latter arc. 
  The singular orbit thus constructed closes up at $z_0$. Its  uniqueness and continuity in $s$ follows from definition and 
  (i)--(iii). If there exists a path $\gamma\subset C_{s;0}$ 
  satisfying  (iv) without requirement of homotopy to the circle $\{0\}\times S^1_{\tau}$, the singular orbit 
   is homotopic to 
  $\gamma$. Indeed, then all the above $p_j$ and $q_j$ lie in $\gamma$. Each arc $p_jq_{j+1}$ in $\gamma$ is homotopic to $[p_j,q_{j+1}]$ as a path with given ends. This is proved as follows. The arc $p_jq_{j+1}$ is contained in a cylinder 
  $\Psi=S^1\times(\tau(z_0),\tau(w_0))$. The cylinder $\Psi$ taken together with the curve $\wh\gamma\cap\Psi$ as a cut locus is homeomorphic to 
  the standard cylinder: a rectangle glued by a pair of opposite sides; the glued sides are considered as a cut locus. 
  The interior of the arc $p_jq_{j+1}$ is clearly contained in the topological rectangle $\Psi\setminus\wh\gamma$. Therefore, 
  $p_jq_{j+1}$ is homotopic to   its boundary  segment $[p_j,q_{j+1}]$. 
  \end{remark}

\begin{theorem} \label{thm2} Let a family (\ref{jos2g}) satisfy conditions (i)--(vi). Here we can replace (iv) by the condition of existence of a closed singular orbit homotopic to $\{0\}\times S^1_\tau$ for some $s$, and then it holds for every $s$, by continuity. 
 Let $L_r=L_r(\omega)\subset U\subset\rr^n_s$ denote its phase-lock areas in the $s$-parameter space with fixed 
 $\omega$; here $r$ denotes the rotation number value. Then for every integer value $r$ the phase-lock area $L_r$  exists and converges, see Footnote 3, to the preimage of the corresponding parquet domain  $L_r^0$ from Theorem \ref{thm1} under the composition map 
$$s\mapsto (c_+,c_-)\mapsto(\ell,u), \ \ \ell=\frac{c_+-c_-}2, \ u=\frac{c_++c_-}2.$$
If $U=\rr^2$, $s=(\ell,u)$ and the above map is the identity, then $L_r\to L^0_r$. 
\end{theorem}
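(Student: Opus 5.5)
Away from the two flow points the orbits of (\ref{jos2g}) shadow the closed singular orbit, and all the dependence on $s$ is concentrated in two local passages: near the right flow point $z_0$ and near the left flow point $w_0$. The plan is to compute, in rescaled coordinates, how many extra turns in $\theta$ an orbit makes at each passage as a function of $c_\pm$, and to read off the rotation number from these two counts.

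\textbf{Step 1 (regular part).} By standard Tikhonov/Fenichel-type theory, for small $\omega$ the forward orbits are exponentially attracted to the stable arcs of $C_{s;0}$ and follow the closed singular orbit $\wh\gamma$. Near the fold points (the local maxima of $\pi_\tau$) they jump along the horizontal segments. The only places where the behaviour depends on $s$ are $\omega^{1/2}$-neighbourhoods of $z_0$ and $w_0$. Away from these neighbourhoods the Poincar\'e map contracts strongly onto a narrow strip around $\wh\gamma$. Since $\wh\gamma$ is homotopic to $\{0\}\times S^1_\tau$, the winding it contributes is zero.

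\textbf{Step 2 (local normal form at a flow point).} Near $z_0$, take the rescaling $\theta=\omega^{1/2}x$, $\tau=\omega^{1/2}t$. After the linear change diagonalizing the Hessian (\ref{hesf}), the equation $d\theta/d\tau=f/\omega$ becomes, up to $O(\omega^{1/2})$ terms, a Riccati-type equation
\[
\frac{dx}{dt}=\tfrac12\bigl(a_{11}x^2+2a_{12}xt+a_{22}t^2\bigr)+\nu .
\]
A shift $y=x+a_{12}t/a_{11}$ and scaling reduce it to $y'=y^2-t^2+c$, with $c=c_+$ given by (\ref{defc}); this is a direct check. Its linearization is the Weber equation $\psi''=(t^2-c)\psi$. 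Take the solution entering from $t=-\infty$ along the stable branch $C_{\mcr,-}(z_0)$. Classical asymptotics of parabolic cylinder functions show that it exits along the stable branch $C_{\mcl,+}$. Meanwhile it makes exactly $k$ full turns in $\theta$ (poles of $y$, i.e.\ zeros of $\psi$) when $c\in Z_k$, that is $c\in(2k-1,2k+1)$, with the convention $Z_0=(-\infty,1)$. The boundary values $c=2k+1$ are the Hermite eigenvalues, for which the solution instead exits along the unstable branch. At $w_0$ the same analysis with reversed horizontal orientation gives $m$ turns in the opposite direction when $c_-\in Z_m$.

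\textbf{Step 3 (gluing).} Compose the two local passage maps with the contracting regular transition maps of Step 1. For $(c_+,c_-)$ in the open set $\{c_+\in Z_{k+r},\ c_-\in Z_k\}$ the Poincar\'e map then has a strongly attracting fixed point, after lifting, with total winding $(k+r)-k=r$. This gives rotation number $r$ on compact subsets, so the phase-lock area exists there. Near the boundaries $c_\pm=2j+1$ the passage turns by an arbitrary fractional amount. Using monotonicity in $s_1$ (condition (vi)) and monotonicity of the rotation number in $f$, we show that the rotation number sweeps monotonically from one integer to the next within a $o(1)$ neighbourhood of these lines. Hausdorff convergence of $L_r$ and of its complement then follows. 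The submersion condition (v) transfers the picture from the $(c_+,c_-)$-plane to $s$-space via the preimage, and in the identity case this gives $L_r\to L_r^0$.

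\textbf{Main obstacle.} The hard step is the uniform justification of Step 2. We must show that the true passage through the $\omega^{1/2}$-box converges, with the required $C^1$-smoothness in $\omega$ and $s$, to the Weber model, and that the entry and exit matching with the slow-curve branches is uniform even near the Hermite values. I would first attempt an explicit comparison argument in the $\theta$-circle: use monotone sub- and super-solutions built from the model equations with $c\pm\varepsilon$, exploiting the monotonicity of the rotation number in the right-hand side of the equation.
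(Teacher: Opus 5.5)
Your architecture matches the paper's. It uses the Riccati/Weber model $x'=x^2-y^2+c_\pm$ at the two flow points and the pole count $k$ of the stable solution for $c\in Z_k$. It glues these with the slow--fast drift so that the Poincar\'e map sends a segment into its $(2\pi r,2\pi)$-translate. For the converse it uses monotonicity of the rotation number in $s_1$ and disjointness of distinct phase-lock areas.

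There is, however, a genuine gap in Step 2. You identify each pole of the Riccati solution with one full turn in $\theta$, after which the orbit is assumed to continue along the \emph{same} Riccati solution. The rescaled line field converges to the Riccati field only uniformly on compact subsets of the $(x,y)$-plane. So the local model says nothing once the orbit leaves the $\sqrt\omega$-box through its right side. From there it travels a distance of order $2\pi$ in $\theta$ outside every rescaled box. This excursion is also not covered by the Tikhonov/Fenichel theory of your Step 1, because it starts inside the $\sqrt\omega$-box, not at a fold point of $\pi_\tau$.

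What you need is that the orbit re-enters the translated box $\widetilde Q_R$ around $z_0+(2\pi,0)$ through its left side after a $\tau$-increment $o(\sqrt\omega)$. In Riccati coordinates, it must come back $O(1/R)$-close to the point $(\infty,y_*)$ where it left. Otherwise it re-enters on a different Riccati solution, with a different number of remaining poles. Without condition (ii) it could even be captured by another stable arc of $C_{s;0}$ on the circle $\{\tau=\tau(z_0)\}$. Either way ``$k$ poles $=$ $k$ turns'' fails.

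This is the content of the paper's Second Main Lemma (Lemma \ref{ml2}, via Proposition \ref{pl2}). It is exactly where condition (ii) is used, and your plan never invokes (ii). From (ii) and the Morse form at $z_0$ one gets the a priori bound $f_{s;\omega}>\sigma_1\theta^2$ on the truncated sectors $S_R=(\{|\tau|<\chi|\theta|\}\setminus Q_R)\cap\{\theta<\pi\}$. Three consequences follow:
\begin{itemize}
\item the field enters $S_R$ through its lateral sides;
\item the transit time to $\{\theta=\pi\}$ is $O(1/(R\sqrt\omega))$;
\item hence $\Delta\tau=O(\sqrt\omega/R)$, i.e.\ $\Delta y=O(1/R)$.
\end{itemize}
The mirror construction at $z_0+(2\pi,0)$ in backward time uses a wider sector, so that the images on $\{\theta=\pi\}$ nest; this completes the passage. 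Induction over $z_0,\dots,z_m$ then gives the $m$ turns.

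Your ``main obstacle'' paragraph aims elsewhere. $C^1$-dependence of the passage maps on $(s,\omega)$ is not needed; the paper uses only $C^0$ convergence on compacts plus sequential compactness. The entry matching is done with invariant trapezoids $\Pi_\delta$ around the slow curve, together with the characterization of $x_-$ as the limit of solutions with $x_n\le0$, $y_n\to-\infty$ (Lemmas \ref{lconv1}, \ref{lxyc}). A sub/super-solution comparison with $c\pm\varepsilon$ on compacts does not touch the excursion through $x=\infty$, which is where the real work lies.

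Your Step 3 converse is the paper's argument, with one correction: at parquet vertices an $s_1$-segment may jump from $r'-1$ to $r'+1$ through a constriction of $L_{r'}$. This is harmless, since one only needs $r\notin[r_1,r_2]$.
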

\begin{remark} In general, a family (\ref{jos2g}) may have phase-lock areas corresponding to fractional non-integer rotatiton number values for every fixed $\omega\neq0$. Theorem \ref{thm2} states that for every $m\in\zz$ 
the union of the  phase-lock areas $L_r$ with 
$m<r<m+1$ shrinks to the piecewise linear curve separating the limit phase-lock areas $L^0_m$, $L^0_{m+1}$, as $\omega\to0$, and  its width tends to zero. 
\end{remark} 
\begin{remark} J.-L. Callot introduced analogues of numbers $c_{\pm}$ in a particular case, for the class of slow-fast families given by Riccati equations of type $\var\frac{du}{d\tau}=-f(\tau)u+g(\tau,s)u^2+\var$,  with $\var>0$ small, $f(\tau)$, $g(\tau,s)$ being $2\pi$-periodic in $\tau$, $g>0$, and $f$ having two simple zeros on the circle. Note that after the variable change 
$u=\tan\frac{\theta}2$  graphs of their solutions are transformed to orbits of vector fields on torus 
$$\begin{cases}\dot\theta=-f(\tau)\sin\theta+g(\tau,s)(1-\cos\theta)+2\var\cos^2\frac{\theta}2\\ \dot\tau=\var.
\end{cases}$$
 See his unpublished PhD thesis:  \cite[deuxi\`eme partie, chapitre 2, section 1]{callot}. An implicit version of the statement of Theorem \ref{thm2} in this particular case is presented there: see the beginning of p.85 in loc. cit.
  \end{remark}

\subsection{Plan of proof of Theorems \ref{thm1} and \ref{thm2}} 
For simplicity we first sketch the proof of Theorem \ref{thm1}. The proof of Theorem \ref{thm2} is analogous and will be discussed at the end of the subsection. 
Let us introduce the  auxiliary  coordinates 
$$c_{\pm}:=u\pm\ell$$
  on the rescaled parameter plane $\rr^2_{\ell,m}$.  In the coordinates $(c_+,c_-)$ the limit parquet domains $L_r^0$ from Theorem \ref{thm1} are
$$L^0_r=\cup_{m-k=r}Z_m\times Z_k\subset\rr^2_{c_+,c_-}; \ \ m,k\in\zz_{\geq0},$$
see (\ref{zmz0}), (\ref{lr0}). 
For every $m\in\zz_{\geq0}$ and $\var\in(0,1)$  set 
\begin{equation}Z_{m,\var}:=\begin{cases}[2m-1+\var,2m+1-\var], \ \text{ if } m\geq1\\
[-\frac1\var,1-\var], \ \text{ if } m=0.\end{cases}\label{zm}\end{equation}
These are compact sets  exhausting $Z_m$, as $\var\to0$. 

For the proof of Theorem \ref{thm1} we have to show that for every $m,k\in\zz_{\geq0}$,  $\var\in(0,1)$ 
and every $\omega$ small enough depending on $m$, $k$, $\var$ the rectangle $Z_{m,\var}\times Z_{k,\var}\subset\rr^2_{c_+,c_-}$ lies in the phase-lock area $L_r=L_r(\omega)$ of system (\ref{jos2}) with the rotation number 
$r=m-k$. To do this, we prove existence of a $2\pi$-periodic orbit with this rotation number. 

The proof is based on  slow-fast system theory. 
In the rescaled parameters $(\ell,u)$ system (\ref{jos}) takes the form 
\begin{equation}\begin{cases}\dot\theta=f_{\ell,u;\omega}(\theta,\tau):=\cos\theta+\ell\omega+(1+u\omega)\cos\tau\\
\dot\tau=\omega.\end{cases}\label{jos2}\end{equation}
It is a slow-fast system with small parameter $\omega$. For  $\omega=0$ it degenerates to the fast system 
\begin{equation}\begin{cases}\dot\theta=f_0(\theta,\tau):=\cos\theta+\cos\tau\\
\dot\tau=0,\end{cases}\label{jos0}\end{equation}
We consider liftings of (\ref{jos2}), (\ref{jos0}) to the universal cover $\rr^2_{\theta,\tau}$. The phase portrait 
of the fast system (\ref{jos0}) is presented at Fig. 6. Its orbits are horizontal lines. 
The unperturbed slow curve $\{ f_0=0\}$ is the union of lines:  
$$C_0=\{ f_0=0\}=\cup_{j\in\zz}\cup_{\pm}\{ \tau=\pm\theta+\pi(2j+1)\}.$$
The intersection points of the latter lines are Morse critical points of the function $f_0$ 
that are split into the two following groups:

- the {\it right flow points }  $z_{j,m}:=(\pi(2j+1), 2\pi m)$, $j,m\in\zz$;

- the {\it left flow points }  $w_{j,m}:=(2\pi j, \pi(2m+1))$.

They are named so according to Definition \ref{horpos}.  The horizontal orbits of  (\ref{jos0}) crossing the positive (negative) flow points are directed to the right (respectively, left) on both sides from them. 
\begin{figure}[ht] \label{figfast}
  \begin{center}
  \hskip3cm \epsfig{file=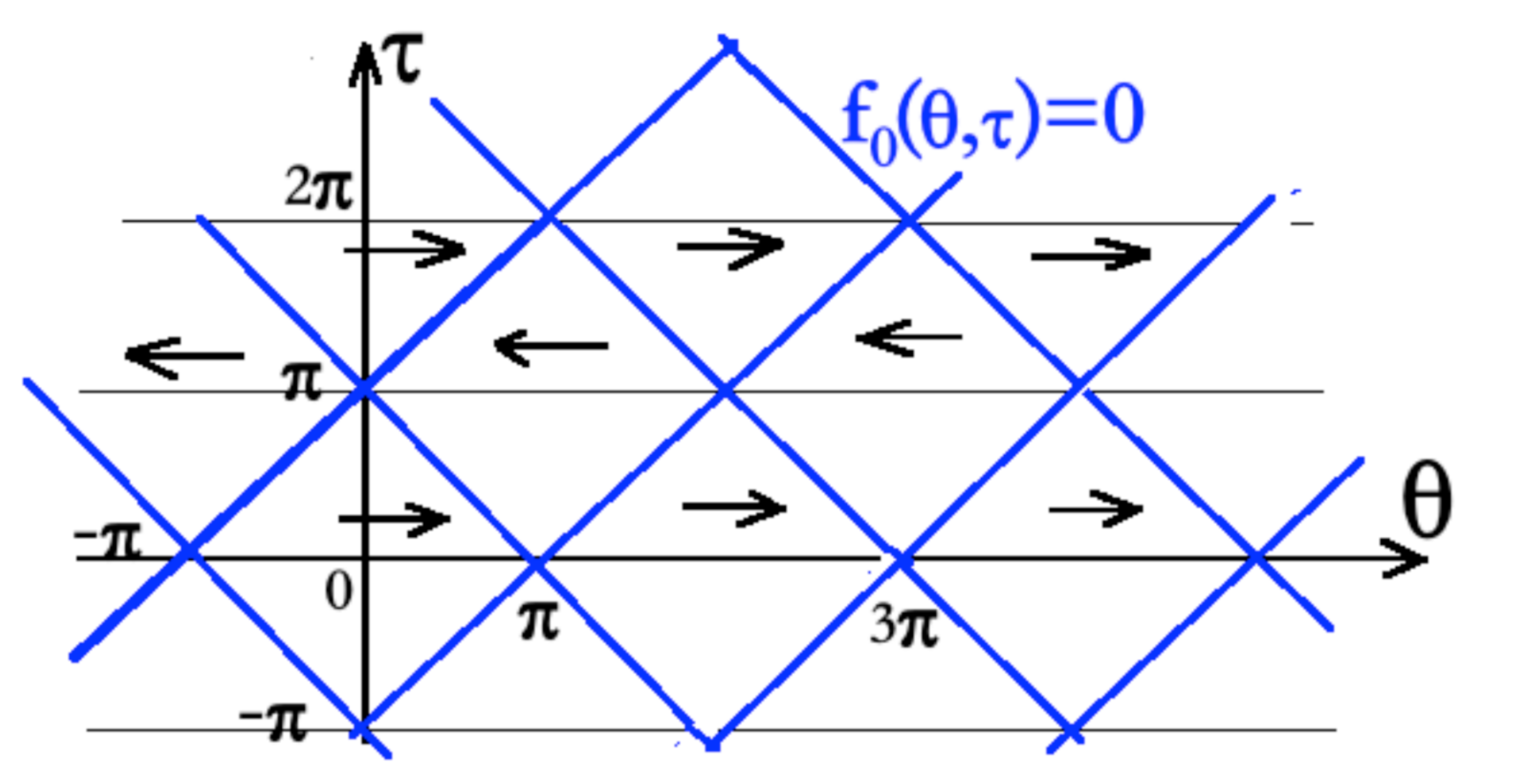, width=18em}
  \caption{Phase portrait of  field (\ref{jos0}) with $\omega=0$.} 
  \end{center}
\end{figure} 
Let us introduce the horizontal segments 
\begin{equation}I_0=[0,\pi]\times\{-\frac{2\pi}3\}, \ \ \ I_1:=[0,\pi]\times\{\frac\pi3\}.\label{i0i1}\end{equation}
Each of them crosses the curve $C_0$ and is contracted to itself by the flow of the unperturbed vector field (\ref{jos0}), 
which is directed to the right at its left end and to the left at its right end. 

Consider now the perturbed vector field (\ref{jos2}) lifted to $\rr^2$, with $\omega>0$ and with $(\ell,u)$ lying in 
a compact subset, e.g., the preimage of the set $Z_{m,\var}\times Z_{k,\var}\subset\rr^2_{c_+,c_-}$ under the map 
$(\ell,u)\mapsto(u+\ell, u-\ell)$. The slow  curve 
$$C_{\ell,u;\omega}=\{ f_{\ell,u;\omega}=0\}\subset\rr^2_{\theta,\tau}$$
converges to $C_0$, as $\omega\to0$. It is well-known from the slow-fast system theory that 
for small $\omega$ the  forward orbit of each segment $I_j$, $j=0,1$ under the flow of (\ref{jos2}) in times $t\in[\frac{\tau_1}\omega,\frac{\tau_2}\omega]$, $0<\tau_1<\tau_2<\frac{2\pi}3$,  is a so-called stable flowbox, denoted $F_-(I_j)$,   $O(\omega)$-close to a stable arc of the slow curve $C_{\ell,u;0}$, and thus, to an arc of the curve 
 $C_{\ell,u;\omega}$. The latter arc  is 
the graph of a smooth function $\theta=\theta(\tau)$. The horizontal width of the flowbox 
is less than $\exp(-\frac d{\omega})$ with some constant $d$ independent on $\omega$. See Proposition \ref{pflboxes} and Figure 7 drawn for  
$u\pm\ell>0$. The  upward extension of the above  arc  may reach a local maximum  of the coordinate $\tau$ on  $C_{\ell,u;\omega}$. As the forward orbit of the flowbox reaches the latter maximum, it goes immediately to the right for $j=0$ (to the left for $j=1$).   
\begin{figure}
 \begin{center}
\epsfig{file=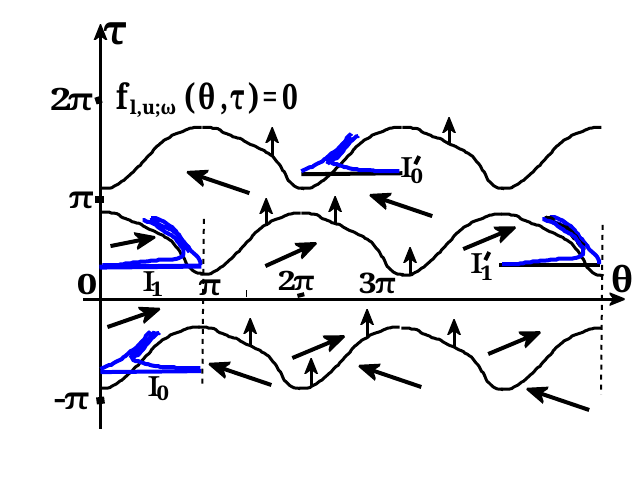, width=17em}
\caption{Phase portrait of field (\ref{jos2}) with $c_{\pm}=u\pm\ell>0$ and small $\omega>0$. Segments $I_j$, their periodic shifts and stable flowboxes marked in bold thick.}
  \end{center}
\end{figure} 

Using the latter facts and a detailed analysis of what happens to the flowbox after it passes near the maximum we prove the following theorem, which will imply Statement 1) of Theorem \ref{thm1}.

Everywhere below for a subset $I\subset\rr^2_{\theta,\tau}$ and a vector field on $\rr^2$, usually either (\ref{jos2}), 
or (\ref{jos2g}), by $\mco(I)$ we denote its forward orbit under the flow 
of the  field in question.
\begin{theorem} \label{mt} For every $m,k\in\zz_{\geq0}$,  $\var\in(0,1)$ for every $\omega>0$ small enough depeding on 
$m$, $k$, $\var$ for every $(\ell,u)$ such that $c_+\in Z_{m,\var}$, $c_-\in Z_{k,\var}$ 

 1) the orbit $\mco(I_0)$ by  (\ref{jos2}) enters  $Int(I_1')$, $I_1'=I_1+(2\pi m,0)$;
 
 2) the  orbit $\mco(I_1')$ enters $Int(I_0')$, $I_0'=I_0+(2\pi(m-k),2\pi)$. See Fig. \ref{fig:orb}.
\end{theorem}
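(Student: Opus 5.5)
The plan is to follow the orbit of $I_0$ through three regimes: a slow drift along a stable arc, a passage through an $O(\sqrt\omega)$-neighbourhood of a flow point, and an exit onto the next stable arc. I will carry this out for Statement 1), where the flow point is the right flow point $z_{0,0}=(\pi,0)$. Statement 2) is the mirror image at the left flow point $w_{m,0}=(2\pi m,\pi)$, with $c_-$ in place of $c_+$ and leftward rotations in place of rightward ones.

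\emph{Step 1 (stable arc).} The segment $I_0$ crosses $C_0$ at $\theta=\pi/3$, on the branch $\tau=\theta-\pi$. Since $\partial f_0/\partial\theta=-\sin\theta<0$ there, this branch is stable; it is $C_{\mcr,-}(z_{0,0})$. By Proposition \ref{pflboxes}, for $\omega$ small the orbit $\mco(I_0)$ is contained in a stable flowbox of width $<e^{-d/\omega}$, which is $O(\omega)$-close to this arc. I push the flowbox up to the level $\tau=-\delta$ for a small fixed $\delta>0$.

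\emph{Step 2 (local passage at $z_{0,0}$).} Set $x=\theta-\pi$. Near $z_{0,0}$ the field is
\[
f=\tfrac12(x^2-\tau^2)+\omega(u+\ell)+O(|x|^3+|\tau|^3)+O(\omega\tau^2),
\]
so the relevant constant is exactly $c_+=u+\ell$. I rescale $x=\sqrt\omega X$ and $\tau=\sqrt\omega T$. This gives the Riccati equation
\[
\frac{dX}{dT}=\tfrac12(X^2-T^2)+c_++O(\sqrt\omega)
\]
on compact $T$-intervals, with the error uniform in $(\ell,u)$ ranging over the compact parameter set. The substitution $X=-2y'/y$ turns the unperturbed equation into the Weber equation $y''=(\frac{T^2}4-\frac{c_+}2)y$. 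The orbit arriving along the stable branch $X\approx T$ as $T\to-\infty$ corresponds to the solution $y$ that is recessive at $-\infty$, namely $y(T)=D_\nu(-T)$ with $\nu=\frac{c_+-1}2$. Poles of $X$ are zeros of $y$. For $c_+\in(2m-1,2m+1)$, i.e.\ $\nu\in(m-1,m)$, the function $D_\nu(-T)$ has exactly $m$ real zeros, and it is dominant at $+\infty$. Hence $X\approx -T$ as $T\to+\infty$, i.e.\ the orbit leaves along the stable branch $C_{\mcl,+}$. On $Z_{m,\var}$ the parameter $\nu$ stays at distance at least $\var/2$ from the integers. This excludes the Hermite (canard) cases, so the count $m$ and the dominance at $+\infty$ are uniform. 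For $m=0$ the set $Z_{0,\var}$ is still bounded, so compactness holds there as well.

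\emph{Step 3 (matching and exit).} I interpret each pole of $X$ globally. When $X$ leaves the box $|X|\le R$ towards $+\infty$, the true orbit has $\theta\in[\pi+\sqrt\omega R,\,3\pi-\sqrt\omega R]$ with $f\geq\mathrm{const}\cdot\omega R^2$ in the outer part. I will check that $\theta$ then runs through a full period while $\tau$ changes by $O(\sqrt\omega/R)$, and that the orbit re-enters the rescaled box around the next right flow point $z_{1,0}$ in the state predicted by the continuation of the Riccati solution through the pole. After $m$ such turns the orbit follows $X\approx -T$. For large $T$ it is captured by the stable branch $\tau=\pi-\theta+2\pi m$. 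From there the flowbox argument of Proposition \ref{pflboxes} applies again, and the orbit is carried up to $\tau=\pi/3$ at $\theta=2\pi/3+2\pi m+O(\omega)$. This point lies in $Int(I_1')$.

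\emph{Main obstacle.} I expect the hard part to be the matching in Step 3. It has two components: gluing the outer slow-fast asymptotics to the inner Riccati solution with errors controlled uniformly, and handling the $O(\sqrt\omega)$ and cubic perturbations through each pole and over the large but finite $T$-range. I would first try to pick $R$ and a $T$-window $[-T_0,T_0]$ depending on $\var$ so that the unperturbed solution is $\var$-robustly attracted to the branches at $\pm T_0$. Continuous dependence on parameters on compact sets, together with the exponential contraction of the stable flowboxes, should then absorb the small errors.
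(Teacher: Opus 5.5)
Your three-regime outline is the paper's proof. It has the Riccati normal form at $(\pi,0)$ with constant $c_+$, the stable solution with exactly $m$ real poles for $c_+\in Z_{m,\var}$, and the passage between neighbouring right flow points with $\tau$-increment $O(\sqrt\omega/R)$ and continuation through the pole on $\rp^1\times\rr$ (Lemma \ref{ml2}). It then exits along $C_{\mcl,+}$ into $I_1'$.

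Two of your deviations are legitimate. First, counting poles via the zeros of $D_\nu$ replaces the paper's deformation from the Hermite case $c=2m+1$ (Proposition \ref{cal1}). For $m=0$, though, $\nu$ is $\var/2$-far only from $\zz_{\geq0}$, not from all integers; that is all you need. Second, for the RSJ family Statement 2) really is a mirror image. The map $(\theta,\tau)\mapsto(2\pi m+\pi-\theta,\tau+\pi)$ conjugates the $(\ell,u)$-system to the $(-\ell,u)$-system, so it swaps $c_+$ and $c_-$. It also sends $I_0$ to $I_1'$ and $I_1+(2\pi k,0)$ to $I_0'$, so 2) follows directly from 1). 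The paper instead redoes the argument at the left flow point (Steps 3--5 in the proof of Theorem \ref{mt2}), because it treats general families that lack this symmetry.

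The genuine gap is the matching you postpone, and the remedy you suggest would not close it.

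\emph{Entry.} Proposition \ref{pflboxes} and the exponential contraction of flowboxes control the orbit only for $\tau\le-\delta$ with $\delta$ fixed. Near $z_0$ the normal contraction rate $|\partial_\theta f|\approx|\tau|$ degenerates. So nothing in your plan shows that at $\tau=-\sqrt\omega T_0$ the orbit has bounded Riccati coordinate $X\le 0$, i.e.\ lies within $O(\sqrt\omega)$ of the slow curve. This is not a small-error issue: the rescaled field is close to the Riccati field only for bounded $X$, so without this a priori localization no comparison is possible.

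The paper obtains the localization from a trapping region (Propositions \ref{proapr}, \ref{cl1}):
\begin{itemize}
\item For $R\sqrt{2\omega}\le|\tau|\le M$ one has $|\partial_\theta f|\ge\sigma_0R\sqrt{2\omega}$ near the slow arc $\gamma$.
\item Hence on the shifted curves $\gamma\pm(\delta\sqrt{2\omega},0)$ one gets $|f|\gtrsim\delta R\omega$, which beats $\omega|d\theta/d\tau|$ along them once $R$ is large.
\item So the trapezoid $\Pi_\delta$ between these curves is forward invariant.
\end{itemize}
Only then does your ``robust attraction'' apply, in the precise form of Lemma \ref{lxyc}, 1) and 7). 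Riccati solutions starting in $\{x\le0\}$ at $y=-R$ converge to $x_-$ as $R\to\infty$, uniformly for $c$ in a compact set; this is Lemma \ref{lconv1}.

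\emph{Exit.} A similar trapezoid, of width of order $R\sqrt\omega$, is needed to carry the orbit from the top of the box around $z_m$ into $I_1'$.

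\emph{Between boxes.} The bound $f\ge\mathrm{const}\cdot\omega R^2$ you quote gives only a $\tau$-increment $O(R^{-2})$. In Riccati units that is $O(R^{-2}\omega^{-1/2})$, which is useless. What you need is the following:
\begin{itemize}
\item the bound $f\ge\sigma_1(\theta-\theta_0)^2$ on a sector $|\tau-\tau_0|<\chi|\theta-\theta_0|$;
\item the fact that orbits cannot leave this sector, since their slopes are $O(R^{-2})<\chi$;
\item integration: $\int d\theta/f=O(1/(R\sqrt\omega))$, which gives your $O(\sqrt\omega/R)$.
\end{itemize}
The paper uses one such sector at each of the two flow points, forward from one and backward from the other. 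This guarantees that the orbit lands on the left side of the next box.
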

 \begin{figure}
 \begin{center}
\epsfig{file=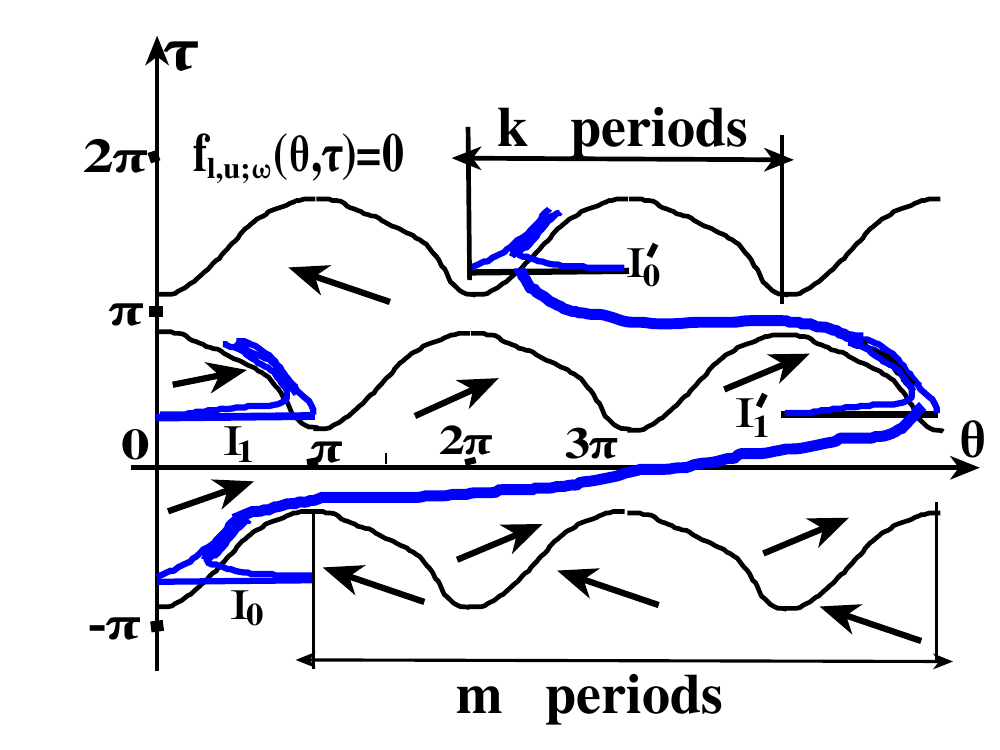, width=17em}
\caption{Phase portrait of field (\ref{jos2}) with small $\omega>0$. Here $c_{\pm}>0$.}
\label{fig:orb}
  \end{center}
\end{figure} 
\begin{corollary} \label{corparq} In the above conditions for every $\omega$ small enough depending on $m$, $k$, $\var$ 
the segment $I_0$ contains a $\frac{2\pi}{\omega}$-periodic point of the flow (\ref{jos2}) as a flow on $\tt^2$  with rotation number $m-k$. 
\end{corollary}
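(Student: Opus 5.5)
We deduce Corollary \ref{corparq} from Theorem \ref{mt} by a one-dimensional fixed point (intermediate value) argument for the lifted Poincar\'e map, followed by a count of the translation.

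\emph{Step 1: a self-map of $I_0$.} Work with the lifted flow of (\ref{jos2}) on $\rr^2_{\theta,\tau}$. Orbits cross every horizontal line $\{\tau=\mathrm{const}\}$ transversally, since $\dot\tau=\omega>0$. Hence the orbit $\mco(I_0)$ meets the line $\{\tau=\frac\pi3\}$ exactly once. This defines a continuous map $g_1:I_0\to\{\tau=\frac\pi3\}$. It is the restriction of the time $\frac{\pi}{\omega}$ flow map, i.e.\ of the flow between the two horizontal levels. It is strictly increasing in $\theta$, because distinct orbits of a planar flow do not cross.

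Statement 1) of Theorem \ref{mt} says $g_1(I_0)\subset Int(I_1')$. Similarly, statement 2) gives a continuous increasing map $g_2:I_1'\to Int(I_0')$, the flow from level $\frac\pi3$ to level $\frac{4\pi}3=-\frac{2\pi}3+2\pi$.

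The composition $g=g_2\circ g_1$ sends $I_0=[0,\pi]\times\{-\frac{2\pi}3\}$ into the interior of $I_0'=I_0+(2\pi(m-k),2\pi)$. It is exactly the time $\frac{2\pi}{\omega}$ flow map restricted to $I_0$.

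\emph{Step 2: a fixed point.} Let $T(\theta,\tau)=(\theta-2\pi(m-k),\tau-2\pi)$ be the deck translation. Then $T\circ g$ maps $I_0$ continuously into its own interior. Identifying $I_0$ with $[0,\pi]$, this is a continuous map of a segment into itself, so by the intermediate value theorem it has a fixed point $p\in I_0$ (in fact in $Int(I_0)$). That is,
$$g(p)=p+(2\pi(m-k),2\pi).$$

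\emph{Step 3: periodicity and rotation number.} The field (\ref{jos2}) is $2\pi\zz^2$-periodic, so the lifted flow commutes with $T$. Iterating Step 2, the orbit of $p$ reaches $p+(2\pi n(m-k),2\pi n)$ at time $\frac{2\pi n}{\omega}$. Projected to $\tt^2$, this is a $\frac{2\pi}{\omega}$-periodic point. Along the solution $\theta(\tau)$ we have $\theta(\tau+2\pi n)-\theta(\tau)=2\pi n(m-k)$. Hence
$$\lim\frac{\theta(\tau)}{\tau}=m-k,$$
so the rotation number is $m-k$.

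\emph{Main point of care.} The only subtlety is that the forward orbit must actually cross the levels $\{\tau=\frac\pi3\}$ and $\{\tau=\frac{4\pi}3\}$ at well-defined single points, so that $g_1$ and $g_2$ are genuine continuous maps. This follows from $\dot\tau=\omega>0$. All the dynamical content (the flowbox passage near the slow curve and the jumps at the maxima) is already contained in Theorem \ref{mt}.
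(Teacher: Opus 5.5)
Your proof is correct and follows the same route as the paper. Theorem~\ref{mt} makes the time $\frac{2\pi}{\omega}$ flow map (the composition of the two Poincar\'e maps) send $I_0$ into the interior of its lattice translate $I_0'$, so after identifying $I_0'$ with $I_0$ you get a self-map of a segment, whose fixed point gives a $\frac{2\pi}{\omega}$-periodic orbit with rotation number $m-k$; you are only more explicit than the paper about the intermediate level $\tau=\frac{\pi}{3}$ and the rotation number computation.
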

\begin{proof} The  time $\frac{2\pi}{\omega}$ flow map, i.e., the Poincar\'e map $\{\tau=0\}\to\{\tau=2\pi\}$  sends $I_0$ to the interior of its image $I_0'$ under translation by 
the period lattice vector $(2\pi(m-k),2\pi)$, by Theorem \ref{mt}. The segment $I_0'$ being identified with $I_0$ by the above translation, the Poincar\'e map has  a fixed point, being a self-map of a segment. The rotation number of its  
$\frac{2\pi}{\omega}$-periodic orbit is equal to $m-k$. This proves the corollary. 
\end{proof}

Statement 1) of Theorem \ref{thm1} follows from the corollary and the discussion at the beginning of the subsection. 
\begin{remark} Consider the auxiliary segments $\wh I_0:=I_0+(\pi,0)$, $\wh I_1:=I_1+(\pi,0)$. On the torus, the unions 
$I_0\cup\wh I_0$, $I_1\cup\wh I_1$ are the circles $S^1_{-\frac{2\pi}3}=S^1\times\{-\frac{2\pi}3\}$ and $S^1_{\frac{\pi}3}=S^1\times\{\frac{\pi}3\}$ respectively. The former circle is sent onto the latter by the Poincar\'e map $h$: the time $\frac{2\pi}{\omega}$ flow map. Statement 1) of Theorem \ref{thm1} holds for some $m$, if and only if the $h$-image of each point 
in $I_0$ lies in $I_1$, not in the interior of $\wh I_1$, and similar criterion takes place for Statement 2) and segments $I_1$, $I_0$, $\wh I_0$. The backward orbits of the segments $\wh I_j$, $j=0,1$ under flow  (\ref{jos2}) in times 
$t\in[\frac{\tau_1}\omega,\frac{\tau_2}\omega]$, $0<\tau_1<\tau_2<\frac{\pi}3$,  are so-called unstable flowboxes denoted $F_+(\wh I_j)$,  lying on $O(\omega)$-distance from the curve $C_{\ell,u;\omega}$ and also having exponentially 
small widths, as do $F_-(I_j)$. The above Poincar\'e map statement for $I_j$ holds if and only if the orbit from 
the flowbox $F_-(I_j)$ does not meet the flowbox $F_+(I_{1-j})$ in time less than $\pi$ after going out of $F_-$. In other terms, this shows that if for small 
$\omega$  we have a transition  from one phase-lock area to another one along a path in the $(\ell,u)$-plane,  then the latter path  contains a parameter value $(\ell,u)$ for which there is an orbit, called duck or canard, that goes from the stable  flowbox $F_-(I_j)$ and arrives to the unstable flowbox $F_+(\wh I_{1-j})$ in time less than $\pi$ for some $j$ (a discussion with A.A.Alexandrov). 
Canards were studied in \cite{chasse}. 
Canard limit cycles were studied in \cite{ilguk} for a subfamily of (\ref{jos2}) and in \cite{callot} for a 
large class of Riccati equations.
\end{remark}

Below we sketch the proof of Theorem \ref{mt}. The steps of proof are described below in the conditions of Theorem \ref{mt} just for simplicity. But in fact,  in Section 2 we do them  in the conditions of more general Theorem \ref{mt2} stated below, which implies Theorem \ref{thm2}. 

Step 1. Local asymptotic model of system (\ref{jos2}) given by Riccati equation 
near a right or left flow point $(\theta_0,\tau_0)$. We show that the rescaling 
$$ (\theta,\tau)\mapsto(x,y):=\left(\pm\frac{\theta-\theta_0}{\sqrt{2\omega}},\frac{\tau-\tau_0}{\sqrt{2\omega}}\right), \ \ + \text{ for right,} \ - \text{ for left,}$$
transforms the line field given by (\ref{jos2}) to a line field converging to the field 
\begin{equation} \frac{dx}{dy}=x^2-y^2+c, \ \ \ c=c_{\pm}=u\pm\ell,\label{ricric}\end{equation} 
as $\omega\to0$. See Subsection 2.1. This result, whose equivalent version is due to J-L.Callot \cite{callot}, extends a well-known similar result due to E.F.Mishchenko and L.S.Pontryagin near  a  local maximum of the coordinate $\tau$ on the unperturbed slow curve, see \cite[section 8]{miroz}, \cite{pontr}, \cite[chapter 4, section 3]{dyn5}. The coordinates $(x,y)$ will be called the {\it Riccati coordinates.} 

Step 2, Subsections 2.2--2.4. Properties of Riccati equation (\ref{ricric}) and its solutions. Equation (\ref{ricric})  
yields an analytic line field on the cylinder $\rp^1\times\rr_y$, $\rp^1=\rr_x\cup\{\infty\}$. 
Each its solution $x(y)$ extends meromorphically to all of $\cc_y$. We will deal with real solutions on $\rr_y$ 
as analytic $\rp^1$-valued functions that may have real poles. 

The three following results contained in \cite{callot, sib75} will be used in the proofs. They are recalled in Subsection 2.2--2.4 with brief proofs.

- For every $c\in\rr$ equation (\ref{ricric}) has  unique so-called {\it stable solution} $x_-(y)$ and {\it unstable solution} $x_{+}(y)$ with 
asymptotics  
$$x_{\pm}(y)\simeq y, \ \text{ as } y\to\pm\infty; \ \ \  x_+(y)=-x_-(-y).$$

- If $x_+(y)=x_-(y)$ as global  functions on $\rr$, then  we say that a {\it heteroclinic connection} takes place. This  
 happens if and only if $c=2m+1$, $m\in\zz_{\geq0}$.  
The latter statement comes from the fact that (\ref{ricric}) is the projectivization of  eigenfunction equation for the quantum harmonic oscillator, equivalence of  heteroclinic connection and existence of an $L_2$-eigenfunction,  and well-known description of its $L_2$-spectrum as $\{2m+1 \ | \ m\in\zz_{\ge0}\}$. 

- If $c\in Z_m$, then $x_-(y)$ has $m$   simple real poles with residue -1, and $x_-(y)\simeq -y$, as $y\to+\infty$.

For every $a>0$ we set 
$$Q_a:=\{|x|<2a, \ \ |y|<a\}\subset\rr^2_{x,y}.$$
For every right or left flow point $z_0=(\theta_0,\tau_0)$ by $Q_a(z_0)$ we denote its rectangular neighborhood given 
by $Q_a$ in the corresponding Riccati coordinates $(x,y)$. In the initial coordinates it is the rectangle
$$Q_a=\{|\theta-\theta_0|<2a\sqrt{2\omega}, \ |\tau-\tau_0|< a\sqrt{\omega}\}.$$
It follows from the above statement that for every $a$ large enough the graph $x=x_-(y)$ intersects $Q_a$ by $m+1$ 
arcs $X_{0,a},\dots, X_{m,a}$ where each two neighbor arcs $X_{j,a}$, $X_{j+1,a}$ are separated by exactly one infinite point of the graph, a real pole of the function $x_-(y)$. The orbit $X_{m,a}$ ends at the upper side of the rectangle $Q_a(z_0)$, at a point 
with $x$-coordinate lying in $(-\frac{11}{10}a, -\frac9{10}a)$.  

Step 3, Subsection 2.5. We extend the orbit $\mco(F_-(I_0))$ to a $O(\sqrt\omega)$-neighborhood of the point $z_0$. We show that its intersection with  the rectangle $Q_a(z_0)$ consists of orbits of (\ref{jos2}) converging uniformly to the graph of the stable solution $x=x_-(y)$ in the 
Riccati coordinates $(x,y)$, that is, to $X_{0,a}$. This will be deduced by an equivalent characterization of the stable solution $x_-(y)$ as limit of 
solutions with initial conditions $(x_n,y_n)$ where $y_n\to-\infty$ and $x_n\leq0$ via an a priori bound on the orbit of 
$F_-(I_0)$.

Step 4, Subsection 2.6. We take arbitrary $(c_+,c_-)\in Z_{m,\var}\times Z_{k,\var}$ and consider the subsequent right flow points 
$z_0,\dots,z_m$, $z_j=z_0+(2\pi j,0)$. We consider the rectangles $Q_a(z_j)$ identified with $Q_a(z_0)$ by 
translations $(\theta,\tau)\mapsto(\theta-2\pi j,\tau)$. We also identify the corresponding Riccati coordinates  
$(x,y)$ centered at $z_j$, $x=x(\theta,\tau)$, $y=y(\theta,\tau)$, by pre-composition by the same translations. 
We show that for every $a>1$ big enough and every $\omega>0$ small enough depending on $a$ and $\var$ further extension of the orbit $\mco(F_-(I_0))$ crosses the rectangles 
$Q_a(z_j)$, $j=0,\dots,m$, by families of arcs of orbits of (\ref{jos2}) that converge uniformly to the arcs  $X_{j,a}$ of graph 
of the same solution $x_-(y)$. To do this, we consider  bigger rectangles $Q_R$  and show that  every orbit arc connecting a point  in appropriate subsegment in the right lateral side of $Q_R(z_j)$ to a point in the left lateral side of $Q_R(z_{j+1})$ is directed up-right and the increment of the coordinate  
$\tau$ along the latter arc is bounded by a quantity $O(\frac{\sqrt\omega}{R})$. This will be deduced from an a priori lower bound  $f_{s;\omega}(\theta,\tau)>\sigma_1(\theta-\theta(z_i))^2$, $i=j, j+1$, with $\sigma_1>0$ independent on $R,\omega$, in appropriate trapezoids with vertical bases, one being a subsegment in one of the above lateral sides, the other lying in the line $\{\theta=\theta(z_j)+\pi\}$.    This implies that the increment of the Riccati coordinate $y$ along the arc in question is bounded by a quantity $O(\frac1R)$, and hence, is small if $R$ is large enough and $\omega$ is small 
depending on $R$. In other terms, the Riccati coordinates of the endpoints of the arc have type 
$(2R,y_{1})$, $(-2R,y_{2})$ respectively where $y_{1,2}$ are close to each other, as $R$ is big enough.  
The orbit $\mco(F_-(I_0))$ crosses $Q_R(z_0)$ by orbits converging to $X_{0,R}\supset X_{0,a}$, see Step 3. 
Let $y_*$ be the pole of the solution $x_-(y)$ such that the point $(\infty,y_*)$ separates $X_{0,R}$ and $X_{1,R}$. 
Then for $j=0$ the above $y_1$ is close to $y_*$, as $R$ is large  and $\omega$ is small depending on $R$. 
This  implies that further extension of the   orbit $\mco(F_-(I_0))$ 
crosses $Q_R(z_1)$ by arcs  starting at points $(-2R,y_2)$ in the Riccati coordinates, with $y_2$ close to $y_*$; thus, 
points in the cylinder $\rp^1\times\rr_y$ close to $(\infty,y_*)$. Hence, as $\omega\to0$, they converge to orbits of Riccati equation (\ref{ricric}) with  initial conditions close to 
$(\infty,y_*)$; thus, orbit of (\ref{ricric}) close to $X_{1,R}$, if $R$ is big.  Since $R$ is arbitrarily big, we get that 
the orbit of $F_-(I_0)$ crosses $Q_a(z_1)$ by arcs converging to $X_{1,a}$. 
Applying the above arguments and induction, we get  that the orbit of $F_-(I_0)$ crosses each 
$Q_a(z_j)$ by  arcs converging uniformly to $X_{j,a}$, as $\omega\to0$.

The next steps 5, 6 finishing the proofs of Theorems \ref{mt}, \ref{mt2} and thus, Statement 1) of Theorem \ref{thm1}, are done in Subsection 2.7. 

Step 5. We take the rectangle $Q_R(z_m)$ and the subinterval $\{-\frac{11}{10}R<x<-\frac9{10}\}$  in its upper side, which contains the endpoint of the arc $X_{m,R}$. 
As $\omega$ is small enough, the orbit $\mco(F_-(I_0))$ crosses the upper side of $Q_R(z_m)$ by 
a segment lying in the latter interval; this follows from result of Step 4. 
We show that the forward orbit of the closure of the above subinterval drifts along the curve $C_{\ell,u;\omega}$ until it crosses the interior of the 
segment $I_1'$. This is proved analogously to the proof of the result of Step 3. This proves Statement 1) of Theorem \ref{mt}.

Step 6:  proof of Statement 2) of Theorem \ref{mt}. We consider the narrow stable flowbox $F_-(I_1')=F_-(I_1)+(2\pi m,0)$ 
and study its forward orbit. It crosses the rectangle $Q_R(w_0)+(2\pi m,0)$ by arcs that all converge uniformly to an arc of the graph of the stable solution $x_-(y)$ of (\ref{ricric}) with $c=c_-$, as in Step 3. Afterwards applying the arguments in Steps 4 and 5 with $z_j$ replaced by $\wh w_j=w_0+(2\pi(m-j),0)$, $j=0,\dots,k$ we get that the  orbit  $\mco(I_1')$ enters the interior of the segment $I_0'$. This will prove Statement 2) of Theorem \ref{mt}. 

The proof of Theorem \ref{thm2} repeats the above-sketched proof of Theorem \ref{thm1}.  It follows from the next generalization of Theorem \ref{mt}. To state it, let us introduce new segments $I_{0,1}$ in the more general settings of Theorem \ref{thm2}. Let $z_0=(\theta_0,\tau_0)$ denote the right flow critical point. Consider the germ at $z_0$ of the 
zero curve $C_{s;0}$. Let $C_{\mcl}$, $C_\mcr$ denote its components intersecting at $z_0$, and let 
$C_{\mcl,\pm}$, $C_{\mcr,\pm}$ denote their upper and lower parts, see (\ref{clr}). We fix  a neighborhood
$|\tau-\tau_0|<M$, $0<M<\frac{\pi}3$,  of the point $\tau_0$, such that the intersection of the curve $C_{s;0}$ with the strip $\rr_{\theta}\times[\tau_0-M,\tau_0+M]$ 
consists of two arcs lying in the curves $C_{\mcl}$ and $C_\mcr$ that are graph of $C^1$-smooth functions $\theta=\theta(\tau)$. 
Let $\Xi_{\pm}$ denote the open  curvilinear triangle bounded by $C_{\mcr,\pm}$, $C_{\mcl,\pm}$ and the line $\{\tau=\tau_0\pm M\}$. The function $f_{s;0}$ is negative on  
$\Xi_{\pm}$  and on its horizontal side with endpoints deleted. It vanishes on its lateral sides. 
Fix two arbitrary points $b_0\in\Xi_-$, $b_1\in\Xi_+$ and horizontal segments $I_j$, see Fig. \ref{fig:i01}: 
\begin{equation} I_j=[a_j,b_j], \ \ \tau(a_j)=\tau(b_j), \ \theta(a_j)<\theta(b_j), \ \ a_j\notin \cup_{\pm}\overline{\Xi_{\pm}}, \ j=0,1.\label{i0i1n}\end{equation}
Then $I_0$ intersects $C_{\mcr,-}$ and not $C_{\mcl,-}$, $I_1$ intersects 
$C_{\mcl,+}$ and not $C_{\mcr,+}$, 
 $$f_{\ell,u;0}(a_{j})>0>f_{\ell,u;0}(b_{j}).$$
 \begin{figure}
 \begin{center}
\epsfig{file=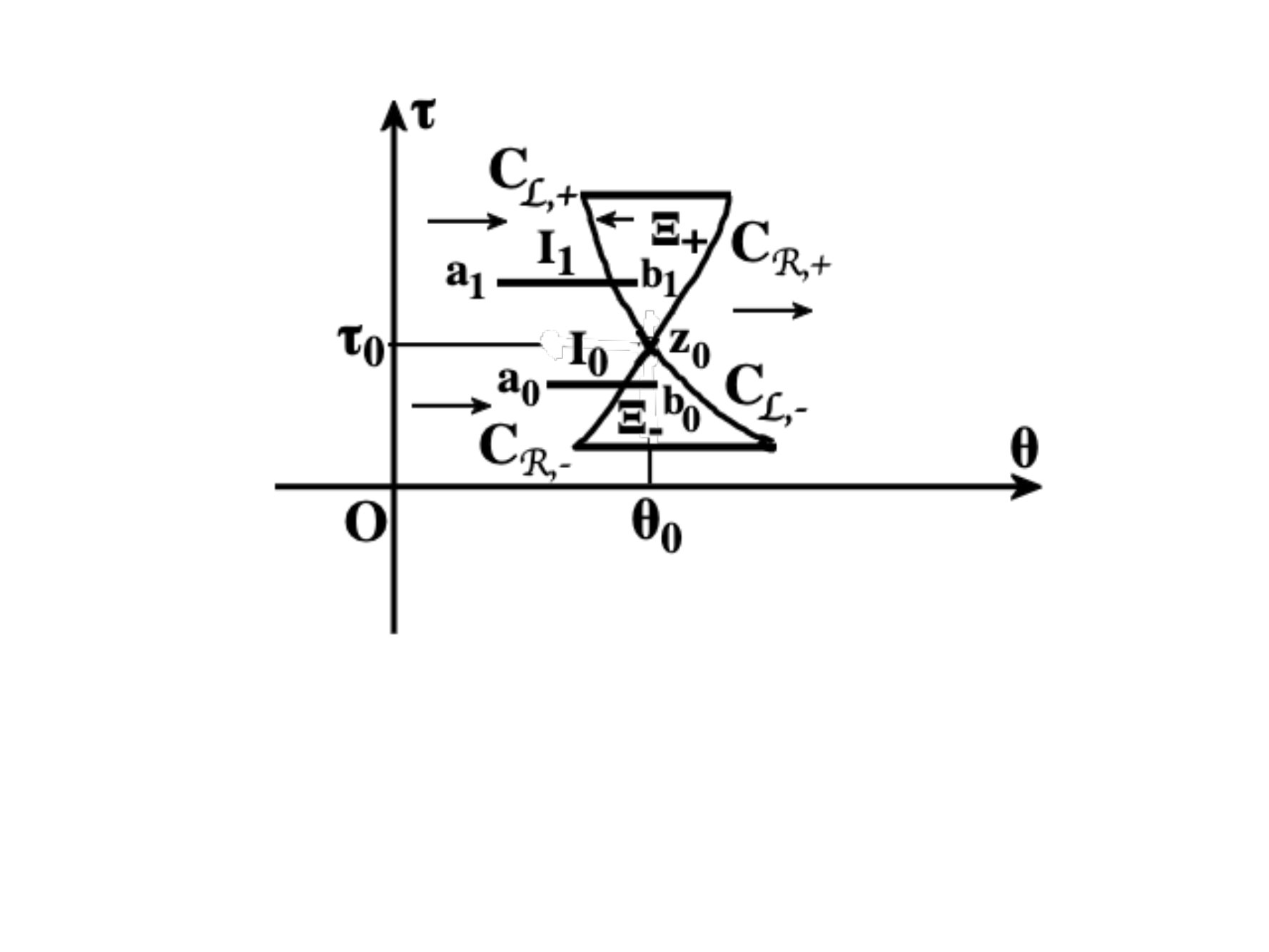, width=25em}
\vspace{-1.8cm}
\caption{The triangles $\Xi_{\pm}$ and the segments $I_0$, $I_1$.}
\label{fig:i01}
  \end{center}
  \end{figure} 
\begin{theorem} \label{mt2} Let a family (\ref{jos2g}) depending on parameters $s\in U\subset\rr^n$ and  
$\omega\in(-d,d)$, $d>0$, satisfy conditions (i)--(iv) in Subsection 1.2, where (iv) is allowed to be modified as in Theorem \ref{thm2}.  In the case, when either $n\neq2$, or the map $\sigma:s\mapsto(c_+,c_-)$ is not a diffeomorphism $U\to\rr^2$, we pass to the restriction of (\ref{jos2g}) to a compact subset $\wt U\Subset U\subset\rr^n_s$.  Then for every $m,k\in\zz_{\geq0}$,  $\var\in(0,1)$ for every $\omega>0$ small enough depending on $m$, $k$, $\var$, $\wt U$ the following statement holds. Let  one have 
$c_+\in Z_{m,\var}$, $c_-\in Z_{k,\var}$, see (\ref{defc}).  Then 
 
 1) the orbit $\mco(I_0)$  enters  $I_1'=I_1+(2\pi m,0)$.
 
 2) the  orbit $\mco(I_1')$ enters  $I_0'=I_0+(2\pi(m-k),2\pi)$.
\end{theorem}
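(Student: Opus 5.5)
The proof follows the six-step scheme sketched above for Theorem \ref{mt}, now carried out for the general family (\ref{jos2g}). All estimates are taken uniformly in $s\in\wt U$ with $c_+\in Z_{m,\var}$, $c_-\in Z_{k,\var}$.

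\textbf{Step 1: local Riccati model.} At the right flow point $z_0$ we write the Taylor expansion
$$f_{s;\omega}=\tfrac12(a_{11}d\theta^2+2a_{12}d\theta d\tau+a_{22}d\tau^2)+\nu\omega+o(|d\theta|^2+|d\tau|^2+\omega).$$
We complete the square by the shear $\xi=d\theta+\frac{a_{12}}{a_{11}}d\tau$ and then rescale by $\sqrt\omega$ with constants chosen from $a_{11}$ and $\Delta$. The rescaled line field then converges, uniformly on compact sets of the cylinder $\rp^1\times\rr_y$, to $\frac{dx}{dy}=x^2-y^2+c_+$. The shear contributes the drift term $\frac{a_{12}}{a_{11}}\omega$ to $\dot\xi$, and this is exactly what produces the combination (\ref{defc}). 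The same computation at the left flow point $w_0$, with $\theta\mapsto-\theta$, gives the model with $c_-$. Only the $C^2$ regularity in $(\theta,\tau)$ and $C^1$ regularity in $\omega$ assumed in Definition \ref{horpos} are used.

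\textbf{Step 2: properties of the Riccati equation.} From the cited results on (\ref{ricric}) I take the stable solution $x_-$ and the fact that for $c\in Z_m$ it has exactly $m$ simple real poles and satisfies $x_-\sim-y$ as $y\to+\infty$. Compactness of $Z_{m,\var}$ makes all of this uniform. In particular the poles stay in a bounded set, and the arcs $X_{j,a}$ in $Q_a$ are well defined for one $a$ chosen large in terms of $m$, $k$, $\var$.

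\textbf{Step 3: entering $Q_a(z_0)$.} The segment $I_0$ meets $C_{\mcr,-}$, and $f$ has the sign pattern at $a_0,b_0$ fixed in (\ref{i0i1n}). Standard Tikhonov/Fenichel theory therefore gives an exponentially thin stable flowbox $F_-(I_0)$ that follows the stable branch $C_{\mcr,-}(z_0)$ up to an $O(\sqrt\omega)$-neighbourhood of $z_0$. To see that it arrives at a point with $x\le0$ and very negative $y$ in Riccati coordinates, I use a barrier: the curve $\{f_{s;\omega}=\mp\delta\}$ shows that the flowbox stays on the correct side of the slow curve. This lets me apply the characterization of $x_-$ as the limit of solutions whose initial data have $y_n\to-\infty$, $x_n\le0$. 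Continuous dependence on initial data in the Riccati chart then gives uniform convergence of the arcs to $X_{0,a}$.

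\textbf{Step 4: passing through $z_1,\dots,z_m$.} The main obstacle is the passage between consecutive copies $z_j,z_{j+1}$, which in general are not horizontal translates of one another in the data. Condition (ii) and the path $\gamma$ (or the closed singular orbit) guarantee that the horizontal strip near $\tau_0$ contains no other zeros of $f_{s;0}$. So on the circle $\{\tau=\tau_0\}$, outside $Q_R(z_0)$, we have $f_{s;0}>\sigma_1(\theta-\theta_0)^2$, where $\theta-\theta_0$ is measured mod $2\pi$ to the nearest copy. This holds in trapezoidal regions of height $O(R\sqrt\omega)$. Integrating $d\tau/d\theta=\omega/f$ across such a region gives a $\tau$-increment of $O(\sqrt\omega/R)$, hence an increment $O(1/R)$ in $y$. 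An orbit leaving $Q_R(z_j)$ through its right side near the pole $(\infty,y_*)$ therefore re-enters $Q_R(z_{j+1})$ through its left side near $(\infty,y_*)$. On $\rp^1\times\rr$, continuity across infinity then yields convergence to $X_{j+1,R}$. Induction on $j$ carries this through $m$ crossings. Since $x_-$ has exactly $m$ poles, after the $m$-th crossing the orbit exits through the upper side of $Q_R(z_m)$ near $x\approx-R$, which lies on the stable branch $C_{\mcl,+}(z_m)$.

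\textbf{Steps 5 and 6: reaching $I_1'$ and then $I_0'$.} After exiting $Q_R(z_m)$, the orbit is first controlled near the exit point by the barrier argument of Step 3. It then follows the singular orbit: stable arcs of $C_{s;0}$, together with jumps at the folds, which are handled by the Mishchenko--Pontryagin analysis and condition (iii). Along this route it reaches the triangle region $\Xi_+$ of $z_m$ and crosses the interior of $I_1'$; this is Statement 1). For Statement 2) I repeat Steps 3--5 starting from $F_-(I_1')$ at the left flow point $w_0+(2\pi m,0)$ with parameter $c_-$. This produces $k$ leftward crossings and then the arrival at $I_0'=I_0+(2\pi(m-k),2\pi)$. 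The homotopy class in (iv) is what guarantees the vertical shift $2\pi$. Finally, $\omega$ is chosen small depending on $R$, $a$, $m$, $k$, $\var$ and $\wt U$.
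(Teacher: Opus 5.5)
Your proposal is correct and follows the paper's proof essentially step by step: the shear-and-rescale of Proposition~\ref{progenr}; Lemma~\ref{lconv1} (a barrier keeps the stable flowbox in $\{x\le 0\}$ at $y=-R$, and the limit characterization of $x_-$ from Lemma~\ref{lxyc} then applies); Lemma~\ref{ml2} (the bound $f>\sigma_1(\theta-\theta_0)^2$ between consecutive right flow points gives a $\tau$-increment $O(\sqrt\omega/R)$, so the orbit passes the pole $(\infty,y_*)$ and keeps following the same $x_-$); the drift from the upper side of $Q_R(z_m)$ into $I_1'$; and, after slow--fast transport along the singular orbit, the mirrored argument at the left flow point with $c_-$. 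Three minor corrections: by $2\pi$-periodicity in $\theta$ the points $z_j$ \emph{are} exact horizontal translates with identical local data, which is exactly why every crossing follows one solution $x_-$; the level $\delta$ in your barrier $\{f_{s;\omega}=\mp\delta\}$ must be taken of order $\omega$, since a fixed level curve stays a fixed distance from $z_0$ and never reaches $Q_R$; and what the homotopy condition in (iv) really secures is zero net horizontal displacement of the singular orbit, which makes the shift exactly $2\pi(m-k)$ (the vertical shift $2\pi$ is essentially automatic).
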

\begin{remark} In the above conditions in $O(\sqrt\omega)$-neighborhood of right or left flow point there exists an 
affine coordinate change transforming line field given by (\ref{jos2g}) to a field converging to Riccati equation (\ref{ricric}) 
with $c=c_{\pm}$ the same, as in Theorem \ref{thm2}. But now the above affine coordinate changes 
are not just compositions of homotheties and translations. See Subsection 2.1. In the coordinates $(\theta,\tau)$ the  rectangles $Q_a$ become  parallelograms depending on the parameters $s$, as do the above affine transformations. 
\end{remark}
\subsection{Constrictions. Plan of proof of Statement 2) of Theorem \ref{thm1}} 
In the proof of Statement 2) of the Parquet Theorem we use symmetry 
$$(\theta,\tau)\mapsto(-\theta,-\tau)$$
of system (\ref{jos2}) found by A.V.Klimenko, see \cite{RK}. In the case, when its Poincar\'e  time $\frac{2\pi}{\omega}$  flow map $h:S^1_{\theta}\times\{0\}\to S^1_{\theta}\times\{2\pi\}$ is hyperbolic, i.e., has one attracting and one repelling fixed point, the symmetry permutes them. If it is parabolic and not the identity, its unique  fixed point is parabolic and fixed by  the involution $\theta\mapsto-\theta$: it is either zero, or $\pi$.

A priori it could happen that a vertex $X=(r,r+2k-1)$, $r\in\zz_{\geq0}$, $k\in\nn$, of the limit domain $L_r^0$ is not a limit of constrictions, but is a result of squeezing of $L_r(\omega)$, as $\omega\to0$. To show that $X$ is  a limit of constrictions, we prove the next proposition. It implies  that as we cross  $X$ along a small vertical segment centered at $X$ of length $2\var<1$, the  attractor of the Poincar\'e map  jumps from one half-circle $(0,\pm\pi)$ to the other one. 
Hence, there is an intermediate value 
$u=u(\omega)\in(r+2k-1-\var, r+2k-1+\var)$ such that for $(\ell,u;\omega)=(r,u(\omega);\omega)$ the  Poincar\'e map fixes  either $0$, or $\pi$, and hence, $(r,u(\omega))\in\partial L_r(\omega)$. Together with the inequality 
$u(\omega)>r-\frac12$ and \cite[lemma 5.1, p. 5462]{bibgl}, this implies that $(r,u(\omega))$ is a constriction.

\begin{proposition} \label{adcomp} Consider an arbitrary vertex $X=(r,r+2k-1)$, $r\in\zz_{\geq0}$, $k\in\nn$, 
of the limit parquet domain $L^0_r$. Let $V$ and $W$ denote its components adjacent to $X$ from above 
and from below respectively. Take arbitrary compact subsets $\wt V\Subset V$, $\wt W\Subset W$. 
Then for every $\omega>0$ small enough depending on $r$, $k$, $\wt V$, $\wt W$ the following statement holds.    Consider  flows  (\ref{jos2}) with arbitrary 
parameters $\psi_1=(\ell_1,u_1)\in\wt V$, $\psi_2=(\ell_2,u_2)\in\wt W$
let us denote them by (\ref{jos2})$_j$, $j=1,2$. The Poincar\'e maps of both (\ref{jos2})$_j$ 
 have  fixed points $(\theta_j,0)$ with $\theta_j(\mo2\pi\zz)$ lying on different half-circles $(0,\pm\pi)$.  They are attracting.
 \end{proposition}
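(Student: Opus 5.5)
The plan is to read off the position of the attracting periodic orbit at the cross-section $\{\tau=0\}$ from the Riccati model near the right flow points. The only Riccati input that changes when one crosses the vertex is the behaviour of the stable solution $x_-(y)$ with $c=c_+$ near $y=0$.

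First I would set up the parameters. At $X$ one has $c_+=u+\ell=2(r+k)-1$ and $c_-=u-\ell=2k-1$. Hence $\wt V$ corresponds to $c_+\in Z_{r+k}$, $c_-\in Z_k$, and $\wt W$ corresponds to $c_+\in Z_{r+k-1}$, $c_-\in Z_{k-1}$. By compactness, both lie in sets of the form $Z_{m,\var}\times Z_{m-r,\var}$ for a fixed $\var$.

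Next, Theorem \ref{mt} and Corollary \ref{corparq} give a periodic orbit through $I_0$ for both parameters $\psi_1,\psi_2$. The time-$\frac{2\pi}{\omega}$ map sends $I_0$ into the interior of $I_0'$. Its image is contained in a stable flowbox of width at most $e^{-d/\omega}$, so the map is a strong contraction of $I_0$ into itself. Hence the fixed point is unique on $I_0$ and attracting. Transporting it by the flow to the section $\{\tau=0\}$ gives an attracting fixed point $(\theta_j,0)$ of the Poincar\'e map $h$, since the two return maps are conjugate by the flow.

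The key step is to locate $\theta_j \pmod{2\pi}$. By Steps 3--4 of the sketch, the forward orbit of $F_-(I_0)$ crosses the strip $|\tau|<a\sqrt{\omega}$ inside the union of the boxes $Q_a(z_j)$, $j=0,\dots,m$, and the connecting arcs between them. In the Riccati coordinates these crossings are uniformly close to the graph of $x_-(y)$ with $c=c_+$. Therefore the point where the periodic orbit meets $\tau=0$ is close, in the cylinder $\rp^1\times\rr_y$, to $(x_-(0),0)$. There are two cases.
\begin{itemize}
\item If $x_-(0)$ is finite, then $\theta\approx \pi+\sqrt{2\omega}\,x_-(0) \pmod{2\pi}$. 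The half-circle containing $\theta$ is then determined by the sign of $x_-(0)$.
\item If $x_-(0)=\infty$, then $y=0$ is a pole. In that case the orbit crosses $\tau=0$ far from $\pi$, in the connecting region near $\theta\equiv0$.
\end{itemize}
To make this a sign statement, I would linearize by $x=-\varphi'/\varphi$ with $\varphi''=(y^2-c)\varphi$. Here $\varphi$ is the solution decaying at $-\infty$, and its zeros are the poles of $x_-$. Then the half-circle containing $\theta_j$ is determined by the quadrant of $(\varphi(0),\varphi'(0))$. I expect this to amount to the sign of the product $\varphi(0)\varphi'(0)$: the point lies near $\pi$ on one side when this product is positive and on the other side when it is negative, and the endpoints $0,\pi$ correspond to $\varphi(0)=0$ or $\varphi'(0)=0$.

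Now use the Pr\"ufer angle $\beta(c)=\arg(\varphi'(0)+i\varphi(0))$. By Sturm comparison it is continuous and strictly monotone in $c$. At the eigenvalues $c=2n+1$ the solution $\varphi$ is the $n$-th Hermite function, which is even or odd. Hence $\varphi'(0)=0$ or $\varphi(0)=0$ there, so $\beta(2n+1)\in\frac{\pi}{2}\zz$, and consecutive eigenvalues give consecutive multiples of $\pi/2$. Consequently, for $c\in Z_m$ the angle $\beta$ lies in a single open quadrant, and the quadrants for $Z_{r+k}$ and $Z_{r+k-1}$ are adjacent. Adjacent quadrants give opposite signs of $\varphi(0)\varphi'(0)$. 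Therefore $\theta_1$ and $\theta_2$ lie on different half-circles $(0,\pm\pi)$, with a margin uniform on the compacts $\wt V,\wt W$.

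The main obstacle is the uniform control of the orbit at the single level $\tau=0$, including the case where $y=0$ is close to a pole, where $\theta$ is far from $\pi$. For this I would use the connecting-arc estimate of Step 4, which gives a $\tau$-increment of order $O(\frac{\sqrt\omega}{R})$. Because $\beta$ stays in a compact subset of an open quadrant, the situation is non-degenerate: $\varphi(0)\neq0$ and $\varphi'(0)\neq 0$. Hence $y=0$ is at a definite distance from the poles and $x_-(0)$ is a definite distance from $0$. The convergence to the Riccati model then pins down the half-circle for all $\omega$ small enough.

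I would also double-check the orientation conventions, namely the sign in the rescaling $x=\pm(\theta-\theta_0)/\sqrt{2\omega}$ and the direction of increase of $\beta$. These only determine which of the two half-circles occurs above $X$ and which below, not the fact that they differ.
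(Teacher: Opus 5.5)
Your proposal is correct, but its decisive step follows a different route from the paper's.

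\textbf{The paper's route.} The paper does not locate the attractor directly for arbitrary $\psi_1\in\wt V$, $\psi_2\in\wt W$. It first argues that, inside a connected component of the interior of $L_r(\omega)$, the attractor depends analytically on the parameters and can never pass through $0$ or $\pi$. The reason is that, by Klimenko's symmetry, a fixed point at $0$ or $\pi$ would force a parabolic map or the identity, and monotonicity in $\ell$ excludes this in the interior. This reduces the problem to the two test points $(r,u_0\pm\var)$ on the line $\{\ell=r\}$; the reduction needs a path inside the interior of $L_r(\omega)$ joining each $\psi_j$ to its test point. At these points $c_+=2m-1\pm\var$ with $m=r+k$. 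The unperturbed solution is then the Hermite-rational one, so $x_-(0)$ equals $0$ or $\infty$ according to the parity of $H_{m-1}$. Proposition~\ref{pmon} then decides on which side $x_-(0)$ moves.

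\textbf{Your route.} You compute the sign of $x_-(0;c)=-\cot\beta(c)$ on the whole of each $Z_m$ via the Pr\"ufer angle. Strict monotonicity holds, e.g.\ from $\partial_c\beta=\rho(0)^{-2}\int_{-\infty}^0\varphi^2\,dy>0$. Together with $\beta(2n+1)=(n+1)\frac\pi2$ this gives $\beta(Z_m)\subset(m\frac\pi2,(m+1)\frac\pi2)$. Hence $\operatorname{sign}x_-(0)=(-1)^{m+1}$, with $\varphi(0)\varphi'(0)\neq0$ uniformly on compact subsets of $Z_m$.

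\textbf{What each buys.} Your argument treats all parameters in $\wt V,\wt W$ directly and uniformly. It needs neither the symmetry, nor the continuation argument, nor monotonicity in $\ell$. It also gives sharper information: the attractor on $\{\tau=0\}$ sits at $\pi+\sqrt{2\omega}\,(x_-(0)+o(1))$ modulo $2\pi$, at distance of order $\sqrt\omega$ from $\pi$. The paper's route uses only local information near an eigenvalue, which is already available from Theorem~\ref{homcon} and Proposition~\ref{pmon}. Its symmetry argument is also reused for Statement 2) of Theorem~\ref{thm1}.

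\textbf{A small correction.} An exponentially small image $h_{I_0}(I_0)$ does not by itself bound the derivative, so ``strong contraction'' needs a justification. The cleanest one is that the Poincar\'e map of (\ref{jos2}) is a M\"obius map of the circle. In a chart sending the repeller to infinity it is linear, with a tiny multiplier. Alternatively, look at the nested images $h_{I_0}^n(I_0)$. The paper simply takes the attracting property from Corollary~\ref{corparq}.
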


\section{Proofs of main results}
\subsection{Local rescaling to asymptotically Riccati equations}
The next proposition is a version of a similar result from \cite{callot} stated for appropriate class of slow-fast families of Riccati equations. It extends a well-known result due to E.F.Mishchenko and L.S.Pontryagin for regular quadratic turning point of slow-fast system, see \cite[section 8]{miroz}, \cite{pontr}, \cite[chapter 4, section 3]{dyn5}, to a Morse critical point. 
\begin{proposition} \label{proresc}  For every right (left) flow point $(\theta_0,\tau_0)$ of (\ref{jos2}) the variable change 
\begin{equation} (\theta,\tau)\mapsto(x,y):=\left(\pm\frac{\theta-\theta_0}{\sqrt{2\omega}},\frac{\tau-\tau_0}{\sqrt{2\omega}}\right)
\label{change+}\end{equation}
with  "$+$" for right, "$-$" for left, transforms the line field defined by system (\ref{jos2}) to the line field given by a differential equation 
\begin{equation} \frac{dx}{dy}=x^2-y^2+c_++O(\omega), \ \ c_\pm:=u\pm\ell,\label{ric+}\end{equation} 
where the $O(\omega)$ is uniform on compact subsets in $\rr^2_{x,y}\times\rr^2_{\ell,u}$. 
\end{proposition}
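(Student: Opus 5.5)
The proof is a direct Taylor expansion at the flow point. The key observation is that both coordinates in (\ref{change+}) are rescaled by the same factor $\sqrt{2\omega}$, up to the sign in $x$. Hence the slope of the line field is preserved up to sign:
$$\frac{dx}{dy}=\pm\frac{d\theta}{d\tau}=\pm\frac{f_{\ell,u;\omega}(\theta,\tau)}{\omega}.$$
It therefore suffices to show that $\pm f_{\ell,u;\omega}=\omega(x^2-y^2+c_\pm)+O(\omega^2)$, uniformly for $(x,y,\ell,u)$ in a compact set. I would carry this out separately at a right flow point and at a left flow point. By $2\pi$-periodicity it suffices to take $z_0=(\pi,0)$ and $w_0=(0,\pi)$.

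\emph{Right flow point} $z_0=(\pi,0)$. Substitute $\theta=\pi+\sqrt{2\omega}\,x$ and $\tau=\sqrt{2\omega}\,y$. Then
$$\cos\theta=-1+\omega x^2+O(\omega^2x^4), \qquad \cos\tau=1-\omega y^2+O(\omega^2y^4),$$
with Taylor remainders bounded by the fourth-order terms. Plugging into $f=\cos\theta+\ell\omega+(1+u\omega)\cos\tau$ gives
$$f=-1+\omega x^2+\ell\omega+1-\omega y^2+u\omega+O(\omega^2)=\omega(x^2-y^2+u+\ell)+O(\omega^2).$$
Here the error collects $\omega^2x^4$, $\omega^2y^4$ and $u\omega^2y^2$, all bounded on compacts in $(x,y,\ell,u)$. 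Dividing by $\omega$ yields (\ref{ric+}) with $c_+=u+\ell$.

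\emph{Left flow point} $w_0=(0,\pi)$. Substitute $\theta=-\sqrt{2\omega}\,x$ and $\tau=\pi+\sqrt{2\omega}\,y$, so that $dx/dy=-f/\omega$. Now
$$\cos\theta=1-\omega x^2+O(\omega^2), \qquad \cos\tau=-1+\omega y^2+O(\omega^2),$$
so
$$f=1-\omega x^2+\ell\omega-(1+u\omega)(1-\omega y^2)+O(\omega^2)=\omega(-x^2+y^2+\ell-u)+O(\omega^2).$$
Hence $-f/\omega=x^2-y^2+(u-\ell)+O(\omega)$, which is (\ref{ric+}) with $c_-=u-\ell$. This also confirms formula (\ref{ulgen}).

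The only point needing care is uniformity of the $O(\omega)$ term. On a compact $K\subset\rr^2_{x,y}\times\rr^2_{\ell,u}$ the original variables satisfy $|\theta-\theta_0|,|\tau-\tau_0|=O(\sqrt\omega)$, so the fourth-order Taylor remainders of $\cos$ are $O(\omega^2)$ with constants depending only on $K$. The cross term $u\omega\cdot\omega y^2$ is $O(\omega^2)$ as well. After division by $\omega$ the total error is therefore $O(\omega)$ uniformly on $K$.

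I do not expect a genuine obstacle here. For the general family (\ref{jos2g}) one would instead use the $C^2$ Taylor expansion of $f_{s;0}$ with Hessian (\ref{hesf}) and the $C^1$ dependence on $\omega$. There the remainder is only $o(\omega)$ rather than $O(\omega)$, and an affine shear must be used to complete the square $a_{11}d\theta^2+2a_{12}d\theta\,d\tau+a_{22}d\tau^2$ into $x^2-y^2$. That shear is where the constant $c$ of (\ref{defc}), with its factor $\Delta^{-1/2}$, arises. For (\ref{jos2}) the Hessian is already diagonal ($a_{12}=0$, $a_{11}=-a_{22}$), so no shear is needed.
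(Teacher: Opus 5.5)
Your proof is correct and is essentially the paper's: expand $\cos$ to second order at the flow point (using $\cos\theta=-\cos\psi$, $\cos\tau=\cos\xi$ at a right point and the reverse at a left point), substitute, and divide by $\dot\tau=\omega$. The only cosmetic difference is that the paper first writes the system in $(\psi,\xi)=(\theta-\theta_0,\tau-\tau_0)$ and rescales afterwards, and you correctly obtain $c_-=u-\ell$ at left flow points, exactly as the paper's computation does, so the $c_+$ in the displayed equation should be read as $c_\pm$.
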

\begin{remark} The proposition implies that if $u\pm\ell>0$ then for every $\omega>0$ small enough the zero locus 
\begin{equation}C_{\ell,u;\omega}:=\{ f_{\ell,u;\omega}=0\}\subset\rr^2_{\theta,\tau}\label{glu}\end{equation}
has  topological type presented at Figures 7, 8. 
\end{remark}
\begin{proof} {\bf of Proposition \ref{proresc}.} 
Let $(\theta_0,\tau_0)$ be a right flow point. Then it is equal to $(\pi,0)(\operatorname{mod}2\pi\zz^2)$. Set 
\begin{equation}\psi:=\theta-\theta_0, \ \  \xi:=\tau-\tau_0; \ \ \ \psi=\sqrt{2\omega}x, \ \ \xi=\sqrt{2\omega}y.\label{thxy}\end{equation}
One has $\cos\theta=-\cos\psi=-1+\frac{\psi^2}2+O(\psi^4)$, 
 $\cos\tau=\cos\xi=1-\frac{\xi^2}2+O(\xi^4)$. 
Substituting the two latter formulas to (\ref{jos2}) yields 
\begin{equation}\begin{cases}\dot{\psi}=f_{\ell,u;\omega}(\theta,\tau)=\frac{\psi^2-\xi^2+2\omega(\ell+u)}2+O(\omega\xi^2)+
O(\psi^4)+O(\xi^4)\\
\dot{\xi}=\omega.\end{cases}\label{fnew+}\end{equation}
Diviging the first equation in (\ref{fnew+}) by the second one and rescaling to $(x,y)$, see (\ref{thxy}) with "$+$", yields 
(\ref{ric+}). 

Case of left flow point $(\theta_0,\tau_0)$,  which is now  $(0,\pi)(\operatorname{mod}2\pi\zz^2)$, is treated analogously. Now $\cos\theta=\cos\psi$, $\cos\tau=-\cos\xi$, and we get 
\begin{equation}\begin{cases}\dot{\psi}=f_{\ell,u;\omega}(\theta,\tau)=\frac{-\psi^2+\xi^2-2\omega(u-\ell)}2+O(\omega\xi^2)+
O(\psi^4)+O(\xi^4)\\
\dot{\xi}=\omega.\end{cases}\label{fnew-}\end{equation}
Dividing the first equation in (\ref{fnew-}) by the second one and rescaling by $\psi=-\sqrt{2\omega}x$, 
$\xi=\sqrt{2\omega}y$ 
yields (\ref{ric+}).
\end{proof}

Let us now consider the case of Theorem \ref{thm2}.

\begin{proposition} \label{progenr} Let $f_{s;\omega}(\theta,\tau)$, $s\in U\subset\rr^n$, be a horizontally positive (negative) germ at a point 
$O=(0,0)$, see Definition \ref{horpos}. Let $a_{ij}=a_{ij}(s)$ be the coefficients of the Hessian form $Hf_{s;0}(O)$, see (\ref{hesf}), taken with sign "$+$" for positive and "$-$" for negative, and let $\Delta$, $\nu$, $c=c_{\pm}(s)$,  be the same, as  
in (\ref{hfelu2})--(\ref{defc}). 
Then the  variable change 
\begin{equation}(\theta,\tau)\mapsto(x,y), \ \ \theta=\la_1\sqrt{2\omega}(\pm x-\mu y), \ \ \tau=\la_2\sqrt{2\omega} y,
\label{varch}\end{equation}
$$\la_1=a_{11}^{-1}\Delta^{\frac14}, \ \ \la_2=\Delta^{-\frac14}, \ \ \mu=a_{12}\Delta^{-\frac12},$$
transforms (\ref{jos2g}) to a system converging as $\omega\to0$ to  Riccati equation (\ref{ricric})  with 
 $c=c(s)$ given by (\ref{defc}) uniformly on compact subsets in $\rr^2_{x,y}\times U$. 
\end{proposition}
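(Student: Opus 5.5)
The plan is to Taylor-expand $f_{s;\omega}$ at $O$ to second order in $(\theta,\tau)$ and first order in $\omega$, then substitute the affine change (\ref{varch}) into the line field $\frac{d\theta}{d\tau}=\frac1\omega f_{s;\omega}(\theta,\tau)$. The choice of $\la_1,\la_2,\mu$ should make the quadratic part become exactly $x^2-y^2$ and the constant term become $c(s)$. All remainders should be $o(1)$ uniformly on compact sets.

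\emph{Expansion.} Take the horizontally positive case first. Since $O$ is a critical point of $f_{s;0}$ with critical value zero, and $f$ is $C^2$ in $(\theta,\tau)$ with second derivatives continuous in $(s,\omega,\theta,\tau)$, we get
$$f_{s;0}(\theta,\tau)=\tfrac12(a_{11}\theta^2+2a_{12}\theta\tau+a_{22}\tau^2)+o(\theta^2+\tau^2).$$
Since $\partial_\omega f$ is continuous, $f_{s;\omega}-f_{s;0}=\omega(\nu+o(1))$ as $(\omega,\theta,\tau)\to0$. Both $o(\cdot)$ terms are uniform in $s$ on compact subsets of $U$, by uniform continuity of the relevant derivatives on compacts; this is the only place the regularity hypotheses of Definition \ref{horpos} enter. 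On a compact set of $(x,y)$ we have $\theta,\tau=O(\sqrt\omega)$, so
$$\frac{f_{s;\omega}}{\omega}=a_{11}\la_1^2(x-\mu y)^2+2a_{12}\la_1\la_2(x-\mu y)y+a_{22}\la_2^2y^2+\nu+o(1).$$

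\emph{Algebra.} Using $\frac{d\theta}{d\tau}=\frac{\la_1}{\la_2}\left(\frac{dx}{dy}-\mu\right)$ and $\frac{\la_2}{\la_1}=a_{11}\Delta^{-1/2}$, $\la_1\la_2a_{11}=1$, $\la_2^2=\Delta^{-1/2}$, I compute the coefficients one by one:
\begin{itemize}
\item the coefficient of $x^2$ is $a_{11}\la_1\la_2=1$;
\item the $xy$-terms give $-2\mu+2a_{12}\la_2^2=-2\mu+2\mu=0$;
\item the coefficient of $y^2$ is $\mu^2-2\mu^2+a_{11}a_{22}\Delta^{-1}=-(a_{12}^2-a_{11}a_{22})/\Delta=-1$;
\item the constant term is $\mu+\nu a_{11}\Delta^{-1/2}=(a_{12}+\nu a_{11})\Delta^{-1/2}=c_+$, as in (\ref{defc}).
\end{itemize}
The remainder, multiplied by the bounded factor $\la_2/\la_1$, stays $o(1)$. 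Hence $\frac{dx}{dy}=x^2-y^2+c_++o(1)$ uniformly on compacts of $\rr^2_{x,y}\times\wt U$.

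\emph{Negative case.} Here $f_{s;0}=-\frac12(a_{11}\theta^2+2a_{12}\theta\tau+a_{22}\tau^2)+\dots$ and $\partial_\omega f=-\nu$. With $\theta=\la_1\sqrt{2\omega}(-x-\mu y)$, both sides of the line-field equation acquire a global sign $-1$. Setting $X=-x$ reduces to the previous computation with $a_{12}$ entering through $-a_{12}$ relative to the substitution, which produces $c_-=(-a_{12}+\nu a_{11})\Delta^{-1/2}$. I will check carefully that this sign bookkeeping matches (\ref{defc}) and the convention $\pm x$ in (\ref{varch}), since that is where an error is most likely. As a consistency check, one recovers Proposition \ref{proresc} for $f=\cos\theta+B+A\cos\tau$.

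\emph{Main obstacle.} The main difficulty is not the algebra but the uniformity of the remainder. The hypotheses give only $C^2$ in $(\theta,\tau)$ and $C^1$ in $\omega$, so I will write the remainders via the integral form of Taylor's formula with second derivatives (respectively $\partial_\omega f$) evaluated at intermediate points. I will then use equicontinuity of these derivatives on a compact neighbourhood of $\{O\}\times\wt U\times\{0\}$ to get an $o(1)$ bound uniform in $s\in\wt U$ and $(x,y)$ in compacts.
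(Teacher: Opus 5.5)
Your proposal is correct and follows the paper's proof: you substitute the affine change into the line field, expand to second order in $(\theta,\tau)$ and first order in $\omega$, and match coefficients to obtain $x^2-y^2+c_\pm+o(1)$. The differences are cosmetic. The paper solves the coefficient system (\ref{sysa11}) for $\la_1,\la_2,\mu$ where you verify the given values, and it carries the sign $\pm$ throughout where you reduce the negative case to the positive one (effectively $a_{12}\mapsto-a_{12}$). Your integral-Taylor argument for uniformity of the remainder in $s$ spells out a point the paper only asserts.
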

\begin{proof} Let us find a variable change of type (\ref{varch}) transforming (\ref{jos2g}) to a system converging to 
(\ref{ricric}). Substituting (\ref{varch}) to (\ref{jos2g}) yields, as $\omega\to0$: 
$$\la_1\sqrt{2\omega}(\pm\dot x-\mu\dot y)=\pm\omega(a_{11}\la_1^2(\pm x-\mu y)^2+2a_{12}\la_1\la_2(\pm x-\mu y)y$$
$$+a_{22}\la_2^2y^2+\nu+o(1)), \ \ \ \la_2\sqrt{2\omega}\dot y=\omega.$$
Dividing the former equation by the latter yields
$$\frac{\la_1}{\la_2}(\pm x'_y-\mu)=\pm(a_{11}\la_1^2(\pm x-\mu y)^2+2a_{12}\la_1\la_2(\pm x-\mu y)y+a_{22}\la_2^2y^2+\nu+o(1)),$$ 
$$x'_y= a_{11}\la_1\la_2x^2\pm2(a_{12}\la_2^2-a_{11}\la_1\la_2\mu)xy$$
\begin{equation}+(a_{11}\la_1\la_2\mu^2-2a_{12}\la_2^2\mu+a_{22}\frac{\la_2^3}{\la_1})y^2+(\pm\mu+\frac{\nu\la_2}{\la_1})+o(1).\label{x'ym}\end{equation}
Here the  $o(1)$ are uniform in $(x,y;s)$ on compact subsets, as $\omega\to0$. Equation (\ref{x'ym}) converges to an equation (\ref{ricric}), if and only if, up to $o(1)$, one has  
$$a_{11}\la_1\la_2=1, \ \ a_{12}\la_2^2-a_{11}\la_1\la_2\mu=0,$$
 \begin{equation}a_{11}\la_1\la_2\mu^2-2a_{12}\la_2^2\mu+a_{22}\frac{\la_2^3}{\la_1}=-1\label{sysa11}\end{equation}
The system of two last equations in (\ref{sysa11}) with $\la_{1,2}\neq0$ is equivalent to 
$$\mu=\frac{a_{12}\la_2}{a_{11}\la_1}, \ \ a_{11}\la_1=\Delta\la_2^3.$$
Therefore, the  solution of (\ref{sysa11}) with $\la_{1,2}>0$ and the $c$ from (\ref{ricric}) are 
$$\la_2=\Delta^{-\frac14}, \ \ \la_1=a_{11}^{-1}\Delta^{\frac14}, \ \ \mu = a_{12}\Delta^{-\frac12}; \ \ c=\pm\mu+\frac{\nu\la_2}{\la_1}=(\nu a_{11}\pm a_{12})\Delta^{-\frac12}.$$
\end{proof}
\begin{remark} In the special case, when family (\ref{jos2g}) is (\ref{jos2}), Proposition \ref{progenr} implies Proposition 
\ref{proresc}.
\end{remark}

\begin{definition} The coordinates $(x,y)$ from Propositions \ref{proresc} or \ref{progenr} centered at a right or left flow point  will be called the {\it Riccati coordinates.}
\end{definition}

\subsection{Riccati equations.  Stable and unstable solutions. Heteroclinic connections}
Equivalent versions of the next two theorems can be found in \cite{callot, sib75}.
\begin{theorem} \label{stunst}  For every $c\in\rr$  Riccati equation (\ref{ricric}) 
has  the {\bf stable} solution $x_-(y)$ and the {\bf unstable} solution $x_+(y)$ with asymptotics 
\begin{equation}x_\pm(y)\simeq y, \ \ \ \text{ as } \ y\to\pm\infty.\label{xpm}\end{equation}
The solutions (\ref{xpm}) are unique, and $x_+(y)=-x_-(-y)$.
\end{theorem}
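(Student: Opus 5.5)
The plan is to prove existence via a fixed-point/limit argument on a one-dimensional family of solutions, and uniqueness via the linearization around the diagonal $x=y$. The antisymmetry relation comes from a symmetry of the equation.

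\emph{Symmetry.} If $x(y)$ solves (\ref{ricric}), then $\tilde x(y):=-x(-y)$ satisfies
$$\tilde x'(y)=x'(-y)=x(-y)^2-y^2+c=\tilde x^2-y^2+c.$$
So $\tilde x$ is again a solution. If $x\simeq y$ as $y\to+\infty$, then $\tilde x\simeq y$ as $y\to-\infty$, and conversely. Hence it suffices to construct a unique $x_-$ and then set $x_+(y):=-x_-(-y)$.

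\emph{Existence of $x_-$.} Write $x=y+v$. Then
$$v'=2yv+v^2+(c-1).$$
For $y\to-\infty$ the linear part $2yv$ is strongly contracting in forward time. So I look for $v$ small on a half-line $(-\infty,-Y]$. Using the integrating factor $e^{-y^2}$, the equation becomes equivalent to the integral equation
$$v(y)=\int_{-\infty}^y e^{y^2-t^2}\bigl(v(t)^2+c-1\bigr)\,dt .$$
Since $\int_{-\infty}^y e^{y^2-t^2}\,dt\simeq \frac{1}{2|y|}$ for $y\to-\infty$, the right-hand side maps the ball $\{\sup|v|\le K/|y|\}$ into itself for suitable $K$ and large $Y$, and it is a contraction there. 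This gives a solution with $v=O(1/|y|)$, i.e. $x\simeq y$ as $y\to-\infty$. Continue it to all of $\rr$ as an $\rp^1$-valued analytic solution on the cylinder; poles are allowed, as noted in Step 2.

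An alternative that is closer to the paper's later use is to take limits of solutions with initial conditions $x(y_n)=0$, $y_n\to-\infty$. Monotonicity comparison with the barriers $x=y\pm\delta$ shows that $y\mp\delta$ traps orbits as $y\to-\infty$. I would mention this but rely on the contraction argument.

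\emph{Uniqueness.} Let $x_1,x_2$ both satisfy $x_j\simeq y$ as $y\to-\infty$, and set $w:=x_1-x_2$. Then
$$w'=(x_1+x_2)w,$$
so
$$w(y)=w(y_0)\exp\int_{y_0}^y (x_1+x_2)\,dt .$$
Since $x_1+x_2\simeq 2y$, the integral behaves like $y^2-y_0^2$. Letting $y_0\to-\infty$ with $y$ fixed therefore gives $|w(y)|\le |w(y_0)|\,e^{-(y_0^2-y^2)(1+o(1))}$. The factor $w(y_0)$ is only $o(|y_0|)$, while the exponential factor decays much faster. Hence $w(y)=0$ near $-\infty$, and by uniqueness of ODE solutions (analytic continuation across poles) $w\equiv0$.

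\emph{Main obstacle.} The step needing care is making precise the meaning of ``$\simeq y$'' (ratio tending to $1$ versus difference tending to $0$). The contraction argument yields the stronger statement $x_- - y\to0$. For uniqueness, I would first show that any solution with $x/y\to1$ actually has $x-y\to0$, using the barriers $y\pm\delta$ above. Only then would I apply the Gronwall-type estimate, which requires just $x_1+x_2\le (2-\epsilon)|y|$ in absolute value with the correct sign.
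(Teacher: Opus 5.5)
Your proof is correct, but apart from the symmetry reduction $x_+(y)=-x_-(-y)$, which is the same as in the paper, it follows a different route.

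\emph{Existence.} You write $x=y+v$ and solve $v(y)=\int_{-\infty}^y e^{y^2-t^2}\bigl(v(t)^2+c-1\bigr)\,dt$ by contraction in the weighted norm $\sup_{y\le -Y}|y|\,|v(y)|$. The estimates close because $\int_{-\infty}^y e^{y^2-t^2}\,dt\le\frac{1}{2|y|}$ holds exactly for $y<0$. The paper instead uses the trapping region $V_{c,d}$, bounded by the left lower branch $\Gamma_{c,l,-}$ of the hyperbola $\{x^2-y^2+c=0\}$ and its translate by $-1$. It obtains $x_-$ by compactness as a limit of solutions starting in $V_{c,d}$ at $y_n\to-\infty$, which only yields $x_-=y+O(1)$.

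\emph{Uniqueness.} Both arguments rest on $w'=(x_1+x_2)w$. You integrate it and let the Gaussian factor $e^{-(y_0^2-y^2)(1+o(1))}$ beat the prefactor $|w(y_0)|=o(|y_0|)$. The paper instead uses the Riccati inequality $|w'|\ge w^2$, valid as long as $x$ and $x_-$ are both negative. From it, it shows that every solution other than $x_-$ has a pole or a positive value below every $y_0$.

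\emph{Comparison.} Your route is self-contained, quantitative, and gives the sharper asymptotics $x_-=y+O(1/|y|)$. The paper's route gives more structural information:
\begin{itemize}
\item uniqueness among all solutions that are merely finite and negative near $-\infty$;
\item the statement that all other solutions satisfy $x\simeq -y$;
\item the characterization of $x_-$ as the limit of solutions with $x(y_n)\le 0$, $y_n\to-\infty$, uniformly in $c$.
\end{itemize}
These are Statements 1), 3), 7) of Lemma \ref{lxyc}. The limit characterization is exactly what is used in Lemma \ref{lconv1}, and your contraction argument does not provide it by itself.

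The detour in your last paragraph is not needed. Your Gronwall estimate already works under the hypothesis $x_j/y\to1$, since it uses only $|w(y_0)|=o(|y_0|)$ and $x_1+x_2\le(2-\epsilon)t$ for $t$ near $-\infty$. Also, the barriers $x=y\pm\delta$ would not by themselves show that $x/y\to1$ implies $x-y\to0$. They only give forward invariance of the strip $|x-y|<\delta$, so you would again need the linearization estimate.
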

 \begin{theorem} \label{tother} If $x(y)$ is a solution of  (\ref{ricric}) different from 
 $x_+(y)$, then  $x(y)\simeq-y$, as $y\to+\infty$. If $x(y)\neq x_-(y)$, 
 then $x(y)\simeq-y$, as $y\to-\infty$. 
 \end{theorem}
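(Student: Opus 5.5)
We need to prove Theorem \ref{tother}: every solution other than $x_+$ behaves like $-y$ as $y\to+\infty$, and symmetrically at $-\infty$. By the symmetry $x(y)\mapsto -x(-y)$, which preserves equation (\ref{ricric}) and exchanges $x_+$ with $x_-$ (Theorem \ref{stunst}), it is enough to treat $y\to+\infty$.

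The plan is to linearize. The substitution $x=-\phi'/\phi$ turns (\ref{ricric}) into the Weber equation
$$\phi''=(y^2-c)\phi .$$
Real solutions of (\ref{ricric}), viewed as $\rp^1$-valued functions, correspond bijectively to lines in the two-dimensional space of real solutions $\phi$. Poles of $x$ are the zeros of $\phi$, and these are simple.

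For $y$ large one has $y^2-c>0$, so there is a standard basis of solutions:
\begin{itemize}
\item a recessive solution $\phi_0\sim y^{(c-1)/2}e^{-y^2/2}$;
\item a dominant solution $\phi_1\sim y^{-(c+1)/2}e^{y^2/2}$.
\end{itemize}
These asymptotics come from Liouville--Green/WKB estimates or from the parabolic cylinder functions, with derivative asymptotics $\phi_0'/\phi_0=-y+O(1/y)$ and $\phi_1'/\phi_1=y+O(1/y)$. The line spanned by $\phi_0$ gives $x=-\phi_0'/\phi_0\simeq y$ as $y\to+\infty$, so by uniqueness in Theorem \ref{stunst} it is $x_+$.

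Any other line is spanned by some $\phi=\alpha\phi_1+\beta\phi_0$ with $\alpha\neq0$. For this $\phi$, $\phi'/\phi=y+o(1)$, hence $x\simeq-y$.

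The one point needing care is the derivative asymptotics of $\phi_1$ and of combinations of the basis. I would avoid special functions and argue on (\ref{ricric}) directly.

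\begin{itemize}
\item \textbf{Step 1: trapping below $-y$.} For $y>\sqrt{|c|}+1$, consider the curves $x=-y\pm\delta$. On $x=-y+\delta$ the slope $x^2-y^2+c=-2y\delta+\delta^2+c$ is less than $-1$ for large $y$. On $x=-y-\delta$ the slope $2y\delta+\delta^2+c$ exceeds $-1$. So the strip $|x+y|<\delta$ is forward-invariant for large $y$. Since $\delta$ can be taken of order $1/y$, any solution entering the strip satisfies $x=-y+o(y)$, in fact $x+y\to0$.

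\item \textbf{Step 2: every other solution enters the strip.}
\begin{itemize}
\item Solutions below the strip: $x<-y-\delta$ gives $x'>-1$ with $x'\geq x^2-y^2+c$, which pushes $x$ up into the strip. A solution cannot escape to $-\infty$, since $x'>0$ whenever $|x|>y+1$.
\item Solutions with $x>y+\delta$ blow up to $+\infty$ in finite $y$ and reappear from $-\infty$ (a pole), landing below the strip.
\item Solutions in the region $|x|<y-\delta$ have $x'<0$ and drift down into the strip.
\end{itemize}
The only way to avoid the strip forever is to stay in the thin funnel around $x=y$. That funnel is repelling for increasing $y$: $\partial_x(x^2-y^2)=2x>0$. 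By the standard funnel argument, at most one solution stays in it, and $x_+$ does. Uniqueness follows from the contraction of the difference: if $x_1,x_2$ both stay near $y$, then $(x_1-x_2)'=(x_1+x_2)(x_1-x_2)$ grows like $e^{y^2}$, which is a contradiction.

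\item \textbf{Step 3: finitely many poles.} Steps 1--2 give $x\simeq-y$ once the solution is in the strip. Thereafter there are no further poles by forward invariance, so only finitely many poles occur on $[y_0,\infty)$.
\end{itemize}

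I expect the main obstacle to be the case analysis in Step 2, specifically showing that a solution cannot oscillate between the regions indefinitely. This is handled by the sign of $x'$ in each region together with the single-pole passage from $+\infty$ to $-\infty$. The Weber-equation viewpoint gives a clean backup: since $\phi$ has finitely many zeros on the half-line where $y^2-c>0$ (Sturm comparison), the pole count is automatic.
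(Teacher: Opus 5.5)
Your phase-plane argument in Steps 1--2 is correct and is essentially the paper's proof. The paper proves the statement as Statement 3 of Lemma \ref{lxyc} at $y\to-\infty$ and transports it by the same symmetry $x(y)\mapsto -x(-y)$. It too uses an invariant strip along the attracting asymptote $x\approx -y$ (bounded by the translates $\Gamma_{c,r,-}\pm(1,0)$ of the hyperbola branch rather than your lines $x=-y\pm\delta$), together with the identity $(x_1-x_2)'=(x_1+x_2)(x_1-x_2)$ to show that only the distinguished solution avoids the strip. The paper reduces to a solution on one side of the strip by comparing with $x_-$ and then uses the sign and size of $(x+y)'=f_c+1$, where you partition into regions and isolate the repelling funnel around $x=y$. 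In that funnel the difference is expanded, not contracted, and a fixed-width strip is invariant once $2y\delta>1+\delta^2+|c|$, not merely $y>\sqrt{|c|}+1$.
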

 
\begin{remark} \label{rem-harm} Riccati equation (\ref{ricric}) is the projectivization of the 
quantum harmonic oscillator eigenfunction equation with eigenvalue $c$: 
\begin{equation} -\psi''+y^2\psi=c\psi.\label{qharm}\end{equation}
Namely, $x(y)$ is a solution of (\ref{ricric}), if and only if 
\begin{equation} x(y)=-\frac{\psi'(y)}{\psi(y)},\label{xpsi}\end{equation}
 where $\psi(y)$ is a solution of 
(\ref{qharm}). 
\end{remark}
Theorems \ref{stunst}, \ref{tother} can be proved  by using the above  remark and the Stokes phenomena theory 
\cite{jlp, bjl1, bjl2, sib, sib75}, which implies existence of solution bases $\psi_{1,\pm}$, $\psi_{2,\pm}$ of 
equation (\ref{qharm}) on $\rr_{\pm}$ such that 
$$\ln\psi_{1,\pm}(y)=\frac{y^2}2(1+o(1)), \ \ \ln\psi_{2,\pm}=-\frac{y^2}2(1+o(1)), \ \ \text{ as } y\to\pm\infty.$$
Solutions $\psi_{2,\pm}$ decrease superexponentially, as $y\to\pm\infty$, and they correspond to solutions 
$x_\pm$ via formula (\ref{xpsi}). Any other solution $\psi(y)$  grows superexponentially and formula (\ref{xpsi}) yields 
$x(y)\simeq -y$. For completeness of presentation we present proofs of Theorems \ref{stunst}, \ref{tother} in Subsection 2.3.

 \begin{definition} We say that a Riccati equation (\ref{ricric}) has a {\it heteroclinic connection,} if  $x_{\pm}(y)$ paste together in one  $\rp^1$-valued solution $x(y)$. 
 \end{definition}
\begin{theorem} \label{homcon} \cite{callot},  \cite[p.89, theorem 22.2]{sib75}  A Riccati equation  (\ref{ricric}) has a heteroclinic connection, 
if and only if $c=2m+1$, $m\in\zz_{\geq0}$. In this case the solution $x_{\pm}(y)$ is a rational function of degree $m+1$ having $m$ finite real poles and one infinite, 
 expressing via the Hermite polynomial $H_m(y)$ as
\begin{equation} x_-(y)=y-\frac{H_m'(y)}{H_m(y)},\ \ \ \ H_m(y)=(-1)^me^{y^2}\frac{d^m}{dy^m}(e^{-y^2}).\label{x-herm}\end{equation}
\end{theorem}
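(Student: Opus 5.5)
By Remark \ref{rem-harm} the plan is to translate the statement into a statement about the quantum harmonic oscillator (\ref{qharm}) and to use the classical description of its $L_2$-spectrum. The first step is to identify the stable and unstable solutions with the decaying solutions of (\ref{qharm}). Let $\psi_{2,-}$ be the solution of the linear equation (\ref{qharm}) that decays superexponentially as $y\to-\infty$; it is defined on all of $\rr$. Then $\ln\psi_{2,-}=-\frac{y^2}2(1+o(1))$ gives $\psi_{2,-}'/\psi_{2,-}\simeq-y$. Hence the $\rp^1$-valued function $-\psi_{2,-}'/\psi_{2,-}$ is a global solution of (\ref{ricric}) with asymptotics $\simeq y$ as $y\to-\infty$. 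By the uniqueness part of Theorem \ref{stunst} it coincides with $x_-$. In the same way $x_+=-\psi_{2,+}'/\psi_{2,+}$.

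Since $x$ determines $\psi$ up to a constant factor, a heteroclinic connection $x_+\equiv x_-$ holds if and only if $\psi_{2,+}$ and $\psi_{2,-}$ are proportional. This happens if and only if (\ref{qharm}) has a nonzero solution $\psi$ decaying superexponentially at both ends, that is, an $L_2$-eigenfunction with eigenvalue $c$.

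The second step is the implication ``connection $\Rightarrow c=2m+1$''. I will show this by the standard ladder-operator argument. Let $a:=\frac d{dy}+y$ and $a^*:=-\frac d{dy}+y$. Then $-\frac{d^2}{dy^2}+y^2=a^*a+1=aa^*-1$. Any decaying eigenfunction $\psi$ of (\ref{qharm}), together with all its derivatives times polynomials, is in $L_2$; this follows from the asymptotics of $\psi_{2,\pm}$ and the equation. Integration by parts then gives
$$c\|\psi\|^2=\|\psi'\|^2+\|y\psi\|^2>0.$$
If $a\psi\neq0$, then $a\psi$ is again a decaying eigenfunction, with eigenvalue $c-2$. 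Iterating, positivity forces the process to stop at some $\psi_0=a^j\psi$ with $a\psi_0=0$. Then $\psi_0=\mathrm{const}\cdot e^{-y^2/2}$ and $\psi_0$ has eigenvalue $1$, so $c=2j+1$.

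I expect the main obstacle to be the justification of the integration by parts and of the decay of $a\psi$. This reduces to the Stokes asymptotics of $\psi_{2,\pm}$ quoted after Theorem \ref{tother}, differentiated once via the equation.

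The third step is the converse implication together with formula (\ref{x-herm}), and here I will argue directly. For $c=2m+1$ the function $\psi=H_m(y)e^{-y^2/2}$ solves (\ref{qharm}); this is equivalent to the Hermite equation $H_m''-2yH_m'+2mH_m=0$, which follows from the Rodrigues formula in (\ref{x-herm}). The function $\psi$ decays at both ends, and
$$x=-\psi'/\psi=y-H_m'/H_m.$$
This rational function has degree $m+1$ and satisfies $x\simeq y$ both as $y\to+\infty$ and as $y\to-\infty$. So by uniqueness in Theorem \ref{stunst} it equals both $x_-$ and $x_+$, and the connection holds.

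Finally, the pole structure. $H_m$ has $m$ simple real zeros, since it is an orthogonal polynomial for the weight $e^{-y^2}$. At each such zero $-H_m'/H_m$ has a simple pole with residue $-1$. Together with the pole at infinity coming from the term $y$, this gives the claimed $m$ finite real poles and one infinite pole.
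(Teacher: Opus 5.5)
Your proposal is correct and follows the paper's route. Both arguments reduce the heteroclinic connection, via $x=-\psi'/\psi$, to the existence of a solution of (\ref{qharm}) decaying at both ends, i.e.\ an $L_2$-eigenfunction; the paper then simply cites classical spectral theory for the spectrum $\{2m+1\}$ and the eigenfunctions $e^{-y^2/2}H_m$, whereas you prove the ``only if'' direction with ladder operators and obtain the converse and formula (\ref{x-herm}) directly from the uniqueness in Theorem \ref{stunst}, which also spares you the one-dimensionality of the eigenspace.
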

\begin{proof} A solution $x(y)$ has asymptotics $x(y)\simeq y$, as $y\to+\infty$, if and only if the corresponding 
solution $\psi(y)$ of the oscillator equation (\ref{qharm}), see (\ref{xpsi}), decreases superexponentially. Otherwise it grows superexponentially. See Remark \ref{rem-harm} and the discussion after it. Therefore, a solution $x(y)=x_{\pm}(y)$ represents 
a heteroclinic connection, if and only if the corresponding solution $\psi(y)$ lies in $L_2(\rr)$. Each solution $\psi(y)$ of (\ref{qharm}) is an eigenfunction of the harmonic oscillator operator with eigenvalue $c$.  An eigenfunction exists in $L_2(\rr)$, if and only if  $c=2m+1$, $m\in\zz_{\geq0}$, and then $\psi(y)=e^{-\frac{y^2}2}H_m(y)$ 
up to constant factor, by classical spectral theory. Hence, $x(y)=y-\frac{H_m'(y)}{H_m(y)}$. 
\end{proof}

\subsection{Dynamics of Riccati equations (\ref{ricric})}
First we prove the following more precise version of Theorem \ref{stunst}.

\begin{lemma} \label{lxyc} 1) For every  $c\in \rr$ and every sequence $(x_n,y_n)$ with $y_n\to-\infty$ and $x_n\leq0$ the sequence of solutions $g_n(y)$ of (\ref{ricric}) with $g_n(y_n)=x_n$ converges to a solution $x_-(y)$ that has asymptotics  $y+O(1)$, as $y\to-\infty$. 

2) For every $c$ a solution $x_-(y)$ with the latter asymptotics is unique. 

3) Every other solution $x(y)$ has asymptotics $x(y)\simeq-y$, as $y\to-\infty$.

4) If for some $y_0$  and a solution $x(y)$ one has $x(y_0)<x_-(y_0)$, then $x(y)$ has a finite real pole $y_*<y_0$. 

5) If for some $y_1\leq-\sqrt{|c|}$  and a solution $x(y)$ one has $x_-(y_1)<x(y_1)\neq\infty$, 
then $x(y)$ has no  pole in $(-\infty,y_1]$.

6) Analogous statements hold for $x_-$, $-\infty$ replaced by $x_+$, $+\infty$ and "$<$", "$>$" in 4), 5) replaced by "$>$" 
and "$<$" respectively. 

7) Statement 1) holds for uniform convergence: for every finite segment $I\subset\rr_c$ and every sequence 
$(x_n,y_n;c_n)\in\rr^2_{x,y}\times I$ with $y_n\to-\infty$, $x_n\leq0$ and $c_n\to c_*$ the sequence of solutions 
$g_n(y)$ of Riccati equation (\ref{ricric}) with $g_n(y_n)=x_n$ and $c=c_n$ converges uniformly to the 
stable solution $x_-(y)=x_-(y;c_*)$ of (\ref{ricric}) with $c=c_*$. 
\end{lemma}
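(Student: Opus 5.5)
The plan is to work throughout with the shifted unknown $z:=x-y$. Along solutions of (\ref{ricric}) it satisfies
\begin{equation}z'=z^2+2yz+c-1=z(z+2y)+c-1.\label{zeqplan}\end{equation}
For $y\to-\infty$ the coefficient $2y$ is large and negative, so the linearization strongly contracts $z$ toward $0$. This is the source of all the statements about $-\infty$. Statement 6) then follows by the symmetry $x(y)\mapsto -x(-y)$ of (\ref{ricric}), which exchanges $x_-$ and $x_+$ and reverses the inequalities.

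\emph{Trapping funnel (main step, giving 1) and 7)).} Fix a segment $I\ni c$. I will show that there exist $K,Y>0$, uniform in $c\in I$, such that the funnel $\Phi=\{y\le -Y,\ |z|\le K/|y|\}$ is forward invariant. On the upper boundary $z=K/|y|$, formula (\ref{zeqplan}) gives $z'\approx-2K+c-1<0$, while the boundary has slope $K/y^2$, which is small. Symmetrically, on the lower boundary $z'\approx 2K+c-1>0$.

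Next, every solution with $x_n\le0$ at $y_n\ll-Y$ enters $\Phi$ quickly and without a pole. Here $z_n=x_n-y_n\le|y_n|<2|y_n|$, so $z+2y<0$ holds initially.
\begin{itemize}
\item If $z_n>K/|y_n|$, then $z'<0$ with decay rate of order $|y|$.
\item If $z_n<-K/|y_n|$, then $z(z+2y)>0$ is large, so $z$ increases toward $0$ monotonically. This rules out blow-up: $z$ is increasing but stays below the funnel, hence below $0$, until it enters.
\end{itemize}
In both cases $z$ reaches $\Phi$ within a $y$-interval of length $O(\log|y_n|/|y_n|)$, which tends to $0$.

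For two solutions $g_n,g_p$ lying in $\Phi$, the difference $d=g_n-g_p$ satisfies $d'=(g_n+g_p)d$ with $g_n+g_p=2y+O(1/|y|)$. Hence $|d|$ is multiplied by roughly $\exp(-(y^2-y_n^2))$, which is tiny. Therefore $(g_n)$ is Cauchy uniformly on every $(-\infty,Y']$. The limit is a solution with $x-y=O(1/|y|)$, and it extends to all of $\rr$ by the continuous dependence of the flow on $\rp^1\times\rr$.

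For 7), all constants depend only on $I$, and continuity in $c$ handles $c_n\to c_*$. I expect this uniform a priori entry into the funnel to be the main technical obstacle. In particular, controlling solutions that start with very negative $x_n$ before the quadratic term stops dominating needs care.

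\emph{Uniqueness and the other solutions (2), 3)).} Take two solutions with $x_i=y+O(1)$ as $y\to-\infty$. Their difference satisfies $d'=(x_1+x_2)d$ with $x_1+x_2=2y+O(1)$. Integrating backward shows that $|d|$ grows like $e^{y^2(1+o(1))}$ unless $d\equiv0$, which contradicts boundedness. This gives 2).

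For 3), take a solution $x\ne x_-$ and run it backward. Let $w=x+y$; it satisfies $w'=w(w-2y)+c+1$, and for $y\to-\infty$ it has a backward-attracting funnel $|w|\le K/|y|$, by the same computation as above with time reversed. A solution that is not $x_-$ cannot stay in the forward funnel of $z$ backward, by the growth of $d$. Compactness of $\rp^1$ and the sign analysis of (\ref{zeqplan}) then force it into the $w$-funnel, so $x\simeq-y$.

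\emph{Poles (4), 5)).} First, a key observation: since $x'\ge c-y^2$, a solution cannot reach $-\infty$ going forward in finite time. So every real pole is crossed from $+\infty$ on the left to $-\infty$ on the right, consistent with residue $-1$.

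For 4), suppose $x(y_0)<x_-(y_0)$. By uniqueness, $x<x_-$ persists backward until a pole. If there is no pole on $(-\infty,y_0]$, then $x<x_-\simeq y<0$ all the way back. By 3) this forces $x\simeq -y>0$, a contradiction. Hence $x$ has a pole $y_*<y_0$.

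For 5), first note that on $\{x=0\}$ we have $x'=c-y^2\le0$ when $y\le-\sqrt{|c|}$. So the horizontal line $x=0$ cannot be crossed upward there, and $x_-\le0$ on $(-\infty,-\sqrt{|c|}]$. In particular $x_-$ has no pole there, since a pole needs $x\to+\infty$ forward.

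Now suppose $x_-(y_1)<x(y_1)\ne\infty$ and $x$ has a pole $y_*\le y_1$. Just to the right of $y_*$ we have $x\to-\infty$, while $x>x_-$ with $x_-$ finite on $[y_*,y_1]$. This is a contradiction, which proves 5).
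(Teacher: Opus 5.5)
Your proposal is correct, and it takes a genuinely different route from the paper's.

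\textbf{How the paper argues.} The paper works qualitatively throughout.
\begin{itemize}
\item For existence, it traps solutions in the unit-width domain $V_{c,d}$ between $\Gamma_{c,l,-}$ and $\Gamma_{c,l,-}-(1,0)$. This only produces \emph{some} solution with $x=y+O(1)$.
\item For 2) and 4), it uses the comparison $|(x-x_-)'|\ge(x-x_-)^2$ when $x,x_-$ have the same sign, so the difference blows up in finite backward time.
\item For 3), it uses the backward-invariant domain $W_{c,d}$ around $\Gamma_{c,r,-}$ together with the growth of $x+y$.
\item It obtains 1) softly. Solutions with $x_n\le0$ stay non-positive on $[y_n,-\sqrt{|c|})$, because the line $x=0$ is crossed only leftward there. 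Any subsequential limit, which exists by compactness of $\rp^1$, is therefore non-positive, so it is not $\simeq-y$, so it equals $x_-$ by 3).
\end{itemize}

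\textbf{How your route differs, and what each buys.} You use the shrinking funnel $|x-y|\le K/|y|$ and the contraction $d'=(g_n+g_p)d$ with $g_n+g_p=2y+O(1/|y|)$. This gives you several things the paper's argument does not:
\begin{itemize}
\item convergence of the whole sequence, with no subsequence extraction and no dependence on 3);
\item the sharper asymptotics $x_-=y+O(1/|y|)$ and an explicit convergence rate;
\item transparent uniformity in $c$ for 7).
\end{itemize}
The price is the quantitative entry estimate into the funnel, which the paper avoids entirely. Your own observation from 5), that $x=0$ cannot be crossed upward for $y\le-\sqrt{|c|}$, would let you run the paper's soft argument for 1) if you wanted to skip that estimate. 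Your uniqueness proof, via $e^{y^2(1+o(1))}$ backward growth of the difference, is a clean alternative to the paper's blow-up comparison. Your proof of 5) is more direct than the paper's: $x$ cannot cross $x_-$, which is finite on $(-\infty,-\sqrt{|c|}]$, yet $x\to-\infty$ just to the right of a pole.

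\textbf{Points to tidy.}
\begin{itemize}
\item The contraction factor has a sign slip. It should be $\exp(y^2-y_n^2)=\exp(-(y_n^2-y^2))$, which is tiny, not $\exp(-(y^2-y_n^2))$.
\item In 4), say explicitly what happens if $x_-$ has a pole $y_p<y_0$. Then $x<x_-\to-\infty$ as $y\downarrow y_p$, which forces a pole of $x$ in $(y_p,y_0)$.
\item In 3), the phrase ``compactness of $\rp^1$ and the sign analysis force it into the $w$-funnel'' carries real weight. It needs the same kind of entry estimate as your forward one, now over the whole arc of $\rp^1$ complementary to the $z$-funnel. That arc includes one passage through $\infty$ for solutions starting to the left of the $z$-funnel. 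The estimate should use two facts:
\begin{itemize}
\item backward departure from the boundary of the $z$-funnel happens at rate $\sim2|y|$;
\item the backward speed is at least $\epsilon y^2$ on $|x|<(1-\epsilon)|y|$.
\end{itemize}
This is at the same level of detail as the paper's $W_{c,d}$ argument, so it is acceptable as a plan, but it is the step to write out carefully.
\end{itemize}
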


In the proof of the lemma and in what follows we use the next notations. Let 
\begin{equation} v_{Ric, c}:=\begin{cases} \dot x=x^2-y^2+c\\ \dot y=1.\end{cases}\label{riccases}\end{equation}
be the vector field  given by Riccati equation (\ref{ricric}). It 
extends analytically to the cylinder $\rp^1\times\rr_y$, where 
$\rp^1=\rr_x\cup\{\infty\}$ is a topological circle. Thus extended vector field is  transversal to the infinity line $\{\infty\}\times\rr_y$ and  
\begin{equation} v_{Ric, c} \ \text{ is directed inside the domain } \ \rr_-\times\rr_y \ \text{ at } \{\infty\}\times\rr_y.
\label{atinf}\end{equation}
  
For every $c\in\rr$ set 
$$f_c(x,y):=x^2-y^2+c, \ \ \Gamma_c:=\{ f_c(x,y)=0\}\subset\rr^2_{x,y},$$  
$$\Gamma_{c,l}=\Gamma_c\cap\{ x\leq0\}, \ 
\Gamma_{c,r}=\Gamma_c\cap\{ x\geq0\},$$
$$\Gamma_{c,\pm}=\Gamma_c\cap\{\pm y>0\}, \ \Gamma_{c,l,\pm}=\Gamma_{c,l}\cap\{\pm y>0\}, 
\ \Gamma_{c,r,\pm}=\Gamma_{c,r}\cap\{\pm y>0\},$$
$$\Omega_{c,\pm}=\{ f_c<0\}\cap\{\pm y>0\}.$$
Note that for $c>0$ one has 
$$\partial\Omega_{c,\pm}=\Gamma_{c,\pm}=\Gamma_{c,l\pm}\cup\Gamma_{c,r,\pm},$$
and the common point $(0,\pm\sqrt c)$ of the left and right branches $\Gamma_{c,l,\pm}$, $\Gamma_{c,r,\pm}$ is the vertex of the component $\Gamma_{c,\pm}$ of the hyperbola $\Gamma_c$.

\begin{proof} {\bf of Lemma \ref{lxyc}.} It suffices to prove the statement of the lemma for $x_-$; its statements for $x_+$ then follow by symmetry $(x,y)\mapsto(-x,-y)$, which implies that $x_+(y)=-x_-(-y)$. First we find at least one sequence of initial conditions for which the corresponding solutions converge to a solution with required asymptotics. We prove uniformity of convergence in $c$ lying in a fixed finite segment $I\subset\rr$. To do this, let us introduce the following auxiliary curve and domain.  Set 
$$\Gamma_{c,l,-}':=\Gamma_{c,l,-}-(1,0).$$
For every  $d>0$ big enough depending on $I$ the line $\{ y=-d\}$ intersects each curve $\Gamma_{c,l,-}$, 
$\Gamma_{c,l,-}'$ at one point, which will be denoted $p_{c,d}=(x_{c,d},-d)$ and  $p_{c,d}'=(x_{c,d}-1,-d)$ respectively. 
Let $V_{c,d}\subset\{ y<-d\}$ denote the domain bounded by $\Gamma_{c,l,-}$, $\Gamma_{c,l,-}'$ and the segment 
$[p'_{c,d},p_{c,d}]$. 

\begin{proposition} \label{pogldom} For every $d>0$ big enough depending on $I$ for every 
$c\in I$  
the restriction of the field (\ref{riccases}) to the lateral sides of the domain $V_{c,d}$  is directed inside it. 
\end{proposition}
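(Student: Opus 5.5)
Both lateral sides of $V_{c,d}$ are graphs over the $y$-axis, so I would check the direction of the field (\ref{riccases}) on each side by comparing its slope $dx/dy=f_c(x,y)$ with the slope of the boundary curve. Take $d>\sqrt{\max(|c|\colon c\in I)}+1$, so that $y^2-c>0$ for all $y\le -d$ and $c\in I$. Then
$$\Gamma_{c,l,-}\cap\{y\le -d\}=\{x=\phi_c(y):=-\sqrt{y^2-c}\}, \qquad \Gamma'_{c,l,-}\cap\{y\le -d\}=\{x=\phi_c(y)-1\},$$
and
$$V_{c,d}=\{y<-d,\ \phi_c(y)-1<x<\phi_c(y)\}.$$
Both curves have the same slope
$$\phi_c'(y)=\frac{-y}{\sqrt{y^2-c}}=\frac{|y|}{\sqrt{y^2-c}}>0 .$$
Since $y^2-c\ge y^2-\max|c|$, this slope is at most $2$ for all $y\le-d$ and $c\in I$ once $d$ is large enough depending on $I$.

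\textbf{Right side $\Gamma_{c,l,-}$.} Here $f_c=0$, so the field is $(0,1)$. Along an orbit through a boundary point $(\phi_c(y_0),y_0)$ we have $x(y)=\phi_c(y_0)+O((y-y_0)^2)$. The boundary itself moves as $\phi_c(y_0)+\phi_c'(y_0)(y-y_0)$ with $\phi_c'(y_0)>0$. Hence for small $y-y_0>0$ the orbit satisfies $x(y)<\phi_c(y)$, i.e.\ it lies strictly to the left of the right side. It also lies to the right of $\phi_c(y)-1$. So the field points into $V_{c,d}$.

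\textbf{Left side $\Gamma'_{c,l,-}$.} Here $x=\phi_c(y)-1=-(\sqrt{y^2-c}+1)$, so
$$f_c(x,y)=(\sqrt{y^2-c}+1)^2-(y^2-c)=2\sqrt{y^2-c}+1\ \ge\ 2\sqrt{d^2-\max|c|}+1 .$$
For $d$ large enough depending on $I$, this exceeds $2\ge\phi_c'(y)$. Thus the orbit slope $dx/dy$ is strictly larger than the slope of the left side. Moving up ($\dot y=1$), the orbit therefore crosses $\Gamma'_{c,l,-}$ transversally from left to right, i.e.\ into $V_{c,d}$.

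Both estimates depend only on $\max_{c\in I}|c|$ and $d$, so one $d$ works uniformly for all $c\in I$, which is the claim. There is no real obstacle. The only points needing care are choosing $d$ so that both curves are defined and single-valued over $y\le -d$, and the degenerate tangency on the right side, where the field is vertical. That tangency is handled by the second-order comparison above, using the strict positivity of $\phi_c'$.
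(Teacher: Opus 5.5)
Your proof is correct and takes essentially the same route as the paper: on $\Gamma_{c,l,-}$ the field is $(0,1)$, which points into $V_{c,d}$ because that side has positive slope $dx/dy$ (so the field is in fact transversal to it there, and first order already suffices), and on $\Gamma'_{c,l,-}$ the value of $f_c$ is large while the slopes of the sides stay bounded. The only difference is that you compute $f_c=2\sqrt{y^2-c}+1$ on the left side exactly and bound the slope by $2$, where the paper uses the asymptotics $f_c\simeq-2y$ and the fact that the slopes are close to one.
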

\begin{proof} On the curve $\Gamma_{c,l,-}$ one has $f_c(x,y)=0$, the field is equal to $(0,1)$  and thus, is directed inside $V_{c,d}$ for every $d>0$. 
As  $y\to-\infty$, one has $x_{c,y}\simeq y$ uniformly in $c\in I$. On the segment $[x_{c,y}-1,x_{c,y}]$ the function 
$f_c(x,y)$ is positive and  $\frac{\partial f_c(x,y)}{\partial x}=2x\simeq2y$.  Therefore, $f_c(p_{c,y}')\simeq-2y\to+\infty$. But if $d>0$ is big, the slopes of the lateral sides of $V_{c,d}$ are close to one. Hence, the restriction $(f_c(p_{c,y}'),1)$ of  field (\ref{riccases}) to its left lateral side lying in $\Gamma_{c,l,-}'$ is directed  inside the domain 
$V_{c,d}$, whenever $d$ is large enough. The proposition is proved.
\end{proof}

Let $I$ and $d$ be as in the proposition. Take a $c\in I$ and a  sequence $(x_n,y_n)$, $y_n\to-\infty$, 
 lying in $V_{c,d}$. Then the graph of corresponding solution $g_n(y)$ over the interval $(y_n,-d)$ lies in $V_{c,d}$, by Proposition \ref{pogldom}. Every subsequence of the above solutions $g_n(y)$ contains a converging 
 subsequence, and its limit, denoted $x_-(y)$, clearly has asymptotics $y+O(1)$, as $y\to-\infty$.
 
 Let us prove uniqueness of a solution $x_-(y)$ with the above asymptotics, i.e., Statement 2), and Statement 4). Take  another solution $x(y)$. Fix a  
 $y_0<0$ such that $x_-(y)<0$  and $f_c(0,y)=c-y^2<0$ for $y\leq y_0$.  One has $x(y_0)\neq x_-(y_0)$, 
 \begin{equation}\frac{d(x(y)-x_-(y))}{dy}=x^2(y)-x_-^2(y)=(x(y)-x_-(y))(x(y)+x_-(y)).\label{xy2}
 \end{equation}
The module of the latter expression is bigger than $(x(y)-x_-(y))^2$, whenever $x(y)$ and $x_-(y)$ are of the same sign. 
Therefore, if $x(y_0)<x_-(y_0)$, then  
$x(y)$ goes to infinity in finite negative time $y_*<y_0$. This already proves Statement 4). Analogously, if $x(y_0)>x_-(y_0)$, we get that the value $x(y)$ becomes positive in finite negative time $y_*<y_0$. Thus, one of the above happens for arbitrarily large $y_*$, since $y_0$ is arbitrary. Hence, $x(y)$ is not asymptotically equivalent to $y$. Statement 2) is proved. 
 
 Let us prove Statement 5). The field $v_{Ric,c}$ is directed to the left half-plane at the points of the $y$-axis with $y< y_1$, since $f_c(0,y)<0$ for $y<-\sqrt{|c|}$.  Let $x(y_1)>x_-(y_1)$. Then the latter inequality remains valid for $y<y_1$ close to $y_1$ and eventually one will have $x_-(y)<0<x(y)$ for $y$ less than some $y_2<y_1$, by the argument after  (\ref{xy2}) and the above field direction statement. Hence,  if $x(y)\to\infty$, as $y$ tends to some $y_*<y_2$, it tends to $+\infty$, i.e., to $\infty$ from $\rr_+$, -- a contradiction to (\ref{atinf}). 
 
  Let us prove Statement 3). To this end, we fix a big $d>0$ and consider the auxiliary domain $W_{c,d}\subset\{ y<-d\}$  bounded by 
  the translation images $\Gamma_{c,r,-}\pm(1,0)$ of the right low branch of the hyperbola $\Gamma_c$ and the segment 
  connecting their intersection points with the line $\{ y=-d\}$. 
  On its lateral sides one has $f_c(x,y)\simeq\mp 2y$, and the vector field $-v_{Ric,c}$ is directed inside $W_{c,d}$, as in 
  the proof of Proposition \ref{pogldom}. Analogously, as $y\to-\infty$, the restriction of the function $f_c(x,y)$ to the semiaxis  $[0,+\infty)\times\{-y\}$  is negative (positive) on the left (respectively, 
  right) from $W_{c,d}$ and its module is no less than $|y|(1+o(1))$ there. 
  
  Take now a solution $x(y)$ different from $x_-(y)$. To prove the asymptotics $x(y)\simeq -y$, it suffices to show 
  that $(x(y_1),y_1)\in W_{c,d}$ for some $y_1<-d$; then the latter inclusion remains valid for all $y<y_1$. 
  Suppose the contrary: the orbit $(x(y),y)|_{y<- d}$ is disjoint from $W_{c,d}$. 
  Fix a big $y_0<-d-2$. Without loss of generality we consider that 
  $x(y_0)>0>x_-(y_0)$. Indeed, if $x(y_0)<x_-(y_0)$, then $x(y)$ has a pole $y_*<y_0$, and for $y<y_*$ close to $y_ *$ the value $x(y)$ is big and positive, by (\ref{atinf}). 
  If $x(y_0)>x_-(y_0)$, then $x(y)$ remains finite on $(-\infty,y_0]$ and is positive for 
  every negative $y$ large enough, as was shown above. Then we can and will consider that the latter holds for every 
   $y<y_0$. One has 
  $(x(y)+y)'=f_c(x,y)+1$. For $y\leq -y_0$ the latter expression is negative on the left from $W_{c,d}$ and positive on 
  its right and has module bigger than $|y|(1+o(1))$ there, as $y\to-\infty$, by the argument with derivatives from the proof of Proposition \ref{pogldom}. Thus, if $(x(y),y)$ lies  on the left from  $W_{c,d}$ for all $y\leq y_0$,  then $x(y)+y$ grows at least as $\frac{y^2}2(1+o(1))$, as $y\to-\infty$, by the latter lower bound. But then the orbit $(x(y),y)$ crosses $W_{c,d}$, since its starts on its left and then arrives to its right, 
  -- a contradiction. The case, when it starts on the right is treated analogously. Statement 3) is proved. 
 
 Let us prove Statement 1). Take a sequence $(x_n,y_n)$ with $y_n\to-\infty$ and $x_n\leq0$. 
 The solutions $g_n(y)$ with $g_n(y_n)=x_n$ are non-positive on the interval $[y_n,-\sqrt{|c|})$, since $v_{Ric,c}$ is directed to the left half-plane on $\{0\}\times(-\infty,-\sqrt{|c|})$. 
 Passing to a subsequence we can achieve that they 
 converge to a solution $x(y)$ of (\ref{ricric}), by compactness. The limit solution is also non-positive on $(-\infty,y_*)$, 
 and thus,  
 cannot have asymptotics $-y$, as $y\to-\infty$. Hence, $x(y)=x_-(y)$, by Statement 3). Statement 1) is proved. 
  Statement 7) follows by compactness. Lemma \ref{lxyc} is proved.
 \end{proof}

\subsection{Behavior of the stable solution $x_-(y)$  in absence of heteroclinic connections} 
\begin{proposition} \label{cal1} \cite{callot} If $c\in Z_{m}$, $m\in\zz_{\geq0}$,  
then the restriction of the stable solution $x_-(y)$ of (\ref{ricric}) to the real line has $m$ distinct poles with residues -1, and $x_+(y)\simeq -y$, as $y\to+\infty$. 
\end{proposition}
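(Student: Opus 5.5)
The plan is to pass to the linear oscillator equation (\ref{qharm}) through Remark \ref{rem-harm}, and to read both claims off the zeros and the growth of the corresponding eigenfunction-type solution. (I read the asymptotic claim in the statement as concerning $x_-$, as in the Introduction: $x_-(y)\simeq-y$ as $y\to+\infty$.)

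\textbf{Step 1: the linear solution $\psi_c$ and the asymptotics.} Let $\psi_c$ be the solution of (\ref{qharm}) decaying superexponentially as $y\to-\infty$, so that $x_-=-\psi_c'/\psi_c$ by (\ref{xpsi}) and the discussion after Remark \ref{rem-harm}. For $c\in Z_m$ we have $c\neq 2j+1$ for all $j$. Hence by Theorem \ref{homcon} there is no heteroclinic connection, i.e.\ $x_-\neq x_+$. Theorem \ref{tother} then gives $x_-(y)\simeq-y$ as $y\to+\infty$. Equivalently, $\psi_c$ grows superexponentially at $+\infty$.

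\textbf{Step 2: poles are simple zeros of $\psi_c$ with residue $-1$.} The real poles of $x_-$ are exactly the real zeros of $\psi_c$. Every such zero $y_*$ is simple, since $\psi_c\not\equiv0$ solves a second-order linear equation. Near it, $x_-=-\psi_c'/\psi_c=-\frac1{y-y_*}+O(1)$, so the residue is $-1$. The zeros are discrete, and by Lemma \ref{lxyc}, 5) (and its mirror statement 6) together with the asymptotics $\psi_c\simeq e^{\mp y^2/2}$ type behavior) there are none for $|y|$ large. So it remains to show that $\psi_c$ has exactly $m$ real zeros.

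\textbf{Step 3a: upper bound $\#\{\text{zeros}\}\le m$ (variational).} Let $y_1<\dots<y_k$ be the zeros of $\psi_c$. Consider the $k$ functions equal to $\psi_c$ on $(-\infty,y_1)$, $(y_1,y_2),\dots,(y_{k-1},y_k)$ respectively and zero elsewhere. Each lies in the form domain of $-\partial_y^2+y^2$: the decay at $-\infty$ is used for the first one, and none uses the growing tail beyond $y_k$. Integrating by parts, each has Rayleigh quotient exactly $c$, and they are pairwise orthogonal. Min-max then gives $\lambda_k\le c$, where $\lambda_j=2j-1$ is the $j$-th eigenvalue. Equality would force an eigenfunction vanishing on an interval, which is impossible, so $2k-1<c<2m+1$. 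Hence $k\le m$.

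\textbf{Step 3b: lower bound $\#\{\text{zeros}\}\ge m$ (Sturm--Picone).} Compare with $\varphi=e^{-y^2/2}H_{m-1}(y)$, which solves (\ref{qharm}) with eigenvalue $2m-1<c$, decays at both ends and has $m-1$ simple zeros $t_1<\dots<t_{m-1}$. By Sturm comparison, $\psi_c$ has a zero in each interval $(t_i,t_{i+1})$. The two unbounded intervals need a Wronskian argument. On $(-\infty,t_1)$, the Wronskian $W=\psi_c'\varphi-\psi_c\varphi'$ tends to $0$ at $-\infty$, since both functions decay there, and $W'=(2m-1-c)\psi_c\varphi$. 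Assuming no sign change of $\psi_c$ on $(-\infty,t_1]$ and integrating up to $t_1$ gives a sign contradiction. The same argument on $(t_{m-1},+\infty)$ uses that $\varphi$ decays fast enough to kill the growth of $\psi_c$ in $W$; I expect this to need a little care, using $\psi_c\sim e^{y^2/2}y^{\alpha}$ and $\varphi\sim e^{-y^2/2}y^{m-1}$ so that $W$ stays bounded. This yields at least $m$ zeros. For $m=0$ the lower bound is vacuous.

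\textbf{Main obstacle.} I expect the delicate point to be the boundary terms at $\pm\infty$ in the Wronskian and min-max arguments, since $\psi_c$ grows at $+\infty$. The first choice is to avoid the growing tail entirely (as in Step 3a) or to check the polynomial-times-Gaussian cancellation explicitly. As a fallback, I would use a continuity argument in $c$: zeros of $\psi_c$ move continuously and monotonically to the left as $c$ increases (Sturm), cannot collide because they are simple, and cannot escape to $-\infty$ by Lemma \ref{lxyc}, 7). A new zero can therefore enter only from $+\infty$, and only when $c$ crosses an eigenvalue $2j+1$, where $\psi_c$ changes the sign of its growth at $+\infty$. Counting crossings from $c\to-\infty$, where there are no zeros, then gives exactly $m$ zeros on $Z_m$.
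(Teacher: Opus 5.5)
Your proof is correct, and your reading of the asymptotic claim as $x_-(y)\simeq-y$ at $+\infty$ matches what the paper actually proves. Your route, however, is genuinely different from the paper's.

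\textbf{The paper's route.} It stays in the Riccati phase plane. Using the absence of heteroclinic connections and the Implicit Function Theorem, it argues that the poles of $x_-(y;c)$ move continuously and keep their number as $c$ ranges over $Z_m$. It then counts them just below the right endpoint $c=2m+1$, where $x_-$ is the explicit Hermite solution (\ref{x-herm}) with $m$ poles. The comparison result, Proposition~\ref{pmon}, is used to show that for $c$ slightly less than $2m+1$ the graph of $x_-(y;c)$ stays to the left of $x_+(y;2m+1)$ for $y\ge R$, so no extra pole enters through $+\infty$.

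\textbf{Your route.} You work pointwise in $c$ with linear Sturm--Liouville theory. Min-max applied to the truncations of $\psi_c$ gives $2k-1\le c<2m+1$, hence $k\le m$; your equality discussion is not even needed. Sturm comparison with $e^{-y^2/2}H_{m-1}$, whose eigenvalue $2m-1$ lies below $c$, gives $k\ge m$.

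\textbf{Trade-off.} Your argument is self-contained and free of compactness. It identifies the number of poles as the number of oscillator eigenvalues below $c$ and bypasses the continuity-of-poles step, which in the paper needs locally uniform control at $\pm\infty$. The paper's route instead yields continuous dependence of the poles on $c$, together with comparison tools it reuses for the uniform statements of Proposition~\ref{cal2} and in the proof of Proposition~\ref{adcomp}.

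\textbf{One point to tidy on $(t_{m-1},+\infty)$.} Boundedness of $W$ is not what gives the contradiction. If $\psi_c,\varphi>0$ there, then $W$ is strictly decreasing from $W(t_{m-1})=-\psi_c(t_{m-1})\varphi'(t_{m-1})\le0$, so what you need is $W>0$ for large $y$. This follows at once from
\[
W=\psi_c\varphi\,\bigl(\psi_c'/\psi_c-\varphi'/\varphi\bigr),
\]
since $\psi_c'/\psi_c=-x_-\simeq y$ by your Step 1 and $\varphi'/\varphi\simeq-y$. No power corrections are needed; in fact $W\to0^+$ because $c>2m-1$.

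For $m=1$ the two unbounded intervals merge into $\rr$. There $W$ decreases from $W(-\infty)=0$ yet is eventually positive, which gives the same contradiction.
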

We prove the next slightly stronger proposition. 
\begin{proposition} \label{cal2} For every $m\in\zz_{\geq0}$,  $0<\var<1$ there exists a $a_0=a_0(m,\var)>2$, such that for every $a\geq a_0$ and $c\in Z_{m,\var}$  the intersection with the rectangle $Q_a=\{|x|<2a, \ |y|<a\}$ of graph of the solution $x_-(y)$  consists of $m+1$ connected components $X_j=X_{j,a}$, $j=0,\dots,m$, satisfying the following statements, see Fig. \ref{fig:xj}.

(i) The components   $X_{j}$ are  graphs of the solution $x_-(y)$ over disjoint intervals $U_j=(\alpha_j,\beta_j)$, $j=0,\dots,m$, $\beta_{j-1}<\alpha_j$. 

(ii) Between any two neighbor intervals $U_j$, $U_{j+1}$, i.e., in $(\beta_j,\alpha_{j+1})$ there is exactly one real pole of the function $x_-(y)$. 

(iii) For every $j\leq m-1$ the restriction of the function $x_-$ to each $U_j$  is strictly increasing, and $x_-(\beta_j)=2a$, 
i.e., $X_{j}$  ends at the right lateral side of the rectangle $Q_a$; $x_-(\alpha_j)=-2a$ for $j=1,\dots,m$.  

(iv) $\alpha_0=-a$, $\beta_{m}=a$, i.e., the starting point of $X_0$ and endpoint of $X_m$ lies on the 
lower, respectively upper side of the rectangle $Q_a$. 

(v) $x_-(\alpha_0), x_-(\beta_m)\in(-\frac{11}{10}a, -\frac9{10}a)$.  
\end{proposition}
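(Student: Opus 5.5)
The proof goes through the oscillator equation (\ref{qharm}) via Remark \ref{rem-harm}. By (\ref{xpsi}), the poles of $x_-$ are the zeros of the subdominant solution $\psi_-$, the one decaying as $y\to-\infty$, and every such pole has residue $-1$. Sturm comparison and oscillation theory, applied on the family $c\in\rr$, show that the number of real zeros of $\psi_-$ is a nondecreasing step function of $c$. This number jumps by one exactly when a new zero comes in from $+\infty$, which happens precisely at the eigenvalues $c=2m+1$ (Theorem \ref{homcon}). For $c<1$ there are no zeros, since $\psi_-''=(y^2-c)\psi_-$ and a convexity argument applies. Hence $\psi_-$ has exactly $m$ real simple zeros for $c\in Z_m$. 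Moreover, $\psi_-$ is not in $L_2$ for such $c$, so $x_-\neq x_+$, and Theorem \ref{tother} gives $x_-(y)\simeq -y$ as $y\to+\infty$. This proves Proposition \ref{cal1}.

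Next I will make this uniform for $c\in Z_{m,\var}$, which is compact. The poles $y_1(c)<\dots<y_m(c)$ depend continuously on $c$ because they are simple zeros. So all of them lie in some $[-K,K]$ with $K=K(m,\var)$.

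For the tails I will use the explicit barrier domains from the proof of Lemma \ref{lxyc}. By Proposition \ref{pogldom}, for $y\le -d$ the graph of $x_-$ stays in $V_{c,d}$, so $x_-(y)=y+O(1)$ uniformly in $c\in Z_{m,\var}$ by statement 7) of Lemma \ref{lxyc}. For $y\ge d$ I need the mirror statement that $x_-$ eventually enters $W$-type domains around $-y$. It must do so at a uniform $d$, and this is the main obstacle. I will first try to get it by continuity in $c$: on the compact $Z_{m,\var}$ the solution $x_-$ stays uniformly away from $x_+$ at the point $y=d_0$, because heteroclinic connections occur only at $c=2j+1$, which lie outside $Z_{m,\var}$. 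The argument of Statement 3) then pushes the orbit into the forward-invariant domain around $-y$ within a uniformly bounded time. Consequently $x_-(\pm a)\in(-\frac{11}{10}a,-\frac{9}{10}a)$ for all $a\ge a_0$, which gives (iv) and (v).

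For the interior structure (i)–(iii), take $a>K+1$. The domain $\{y\le -d\}$ is handled by $V_{c,d}$, where $|x_-|<2a$. Between consecutive poles I will show monotonicity near the poles by a Laurent expansion: $x_-\approx -1/(y-y_j)$, so $x_-$ runs from $+\infty$ down after a pole on the right and rises to $+\infty$ before the next pole. On the part where $|x|\ge 2a>2|y|+\sqrt{|c|}$ we have $x'=x^2-y^2+c>0$. Hence each passage through $|x|\ge 2a$ near a pole is a single monotone excursion, leaving $Q_a$ through the right side with $x=2a$ and re-entering through the left side with $x=-2a$. Elsewhere in $Q_a$ the graph stays inside, since $|x_-|<2a$ away from the poles. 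To guarantee this, I will enlarge $a_0$ using the uniform bound on $x_-$ over compact subsets of $[-a,a]$ minus small neighborhoods of the poles. This bound follows from the tail estimates: outside the excursions, $x_-$ stays between $-|y|-O(1)$ and the increasing branch. The result is exactly $m+1$ components, separated by one pole each, which gives (i)–(iii). The monotonicity assertion in (iii) also needs $x'>0$ along $U_j$ for $j\le m-1$, not just near the endpoints; I will try to obtain it by comparison with the hyperbola $\Gamma_c$, since an arc reaching the right side cannot lie below $\Gamma_{c,l}$ after crossing it.
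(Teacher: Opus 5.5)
\paragraph{The gap: monotonicity in (iii).} Your sign information on $x_-'$ covers only three regions: where $|x|\ge 2a$, small neighbourhoods of the poles (the Laurent expansion), and the tail $y\le -d$, where the graph lies in $V_{c,d}$ to the left of $\Gamma_{c,l,-}$. On the bulk of each arc $U_j$, nothing you say prevents $x_-'$ from vanishing or changing sign. Your closing sentence about comparison with $\Gamma_c$ is a hope, not an argument: it is unclear which region ``below $\Gamma_{c,l}$'' means, and the lower component of $\{f_c<0\}$ is not addressed at all.

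The missing idea is an invariance argument for the two components of $\{x'\le 0\}$.
\begin{itemize}
\item For $m=0$ the claim is vacuous, so assume $m\ge 1$. Then $c>1$, so $\{f_c\le 0\}=\overline{\Omega_{c,+}}\cup\overline{\Omega_{c,-}}$ with $\Omega_{c,\pm}=\{\pm y>\sqrt{x^2+c}\}$.
\item On $\Gamma_c$ the field $v_{Ric,c}$ equals $(0,1)$. Hence orbits can enter $\Omega_{c,+}$ but never leave it, and can leave $\Omega_{c,-}$ but never enter it. Moreover $|x|<|y|$ on both components.
\item An arc $U_j$ with $j\le m-1$ never meets $\overline{\Omega_{c,-}}$, because its initial point lies outside it. For $j\ge 1$ it starts at $x=-2a$, where $f_c=4a^2-y^2+c>0$. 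For $j=0$ it starts at $y=-a$ strictly to the left of $\Gamma_{c,l,-}$, by the barrier $V_{c,d}$.
\item It never meets $\overline{\Omega_{c,+}}$ either: from there it would keep $|x|\le|y|<a$ and could not reach $x_-(\beta_j)=2a$ with $\beta_j<a$.
\end{itemize}
So $f_c(x_-(y),y)>0$ on $U_j$, which is the strict monotonicity. This is exactly how the paper proves (iii).

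\paragraph{The rest of the proposal.} Apart from this gap, your route is sound, and it differs from the paper's in the counting step. You get $m$ poles from Sturm oscillation: zeros of $\psi_-$ move left as $c$ grows, and a new one enters from $+\infty$ exactly as $c$ crosses $2m+1$. The paper instead shows the pole count is constant on $Z_m$ and evaluates it for $c$ slightly below $2m+1$, comparing with the Hermite solution through Proposition~\ref{pmon}. Your version is the more standard spectral argument; the paper's stays inside the Riccati picture. Note that convexity alone only rules out zeros for $c\le 0$; on $(0,1)$ you need the no-jump-before-the-first-eigenvalue part.

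Your uniform control of the tail at $+\infty$ is more careful than the paper's one-word ``compactness''. Entrance into the forward-invariant mirror image of $W_{c,d}$ is an open condition, so its time is bounded over the compact $Z_{m,\var}$.

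There are two slips:
\begin{itemize}
\item Since the residue is $-1$, $x_-\approx -1/(y-y_j)$ increases on both sides of a pole and returns from $-\infty$ after it. It does not ``run from $+\infty$ down''.
\item The inequality $2a>2|y|+\sqrt{|c|}$ fails for $|y|$ close to $a$. Use instead $x^2-y^2+c\ge 3a^2+c>0$ for $|x|\ge 2a$, $|y|<a$ and $a$ large.
\end{itemize}
That corrected inequality says the lateral sides of $Q_a$ are crossed only from left to right. Together with $|y_j|<a$ and (iv), (v), this already gives exactly $m+1$ components separated by single poles. Your extra uniform bound on $x_-$ away from the poles is therefore unnecessary.
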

 \begin{figure}
 \begin{center}
\epsfig{file=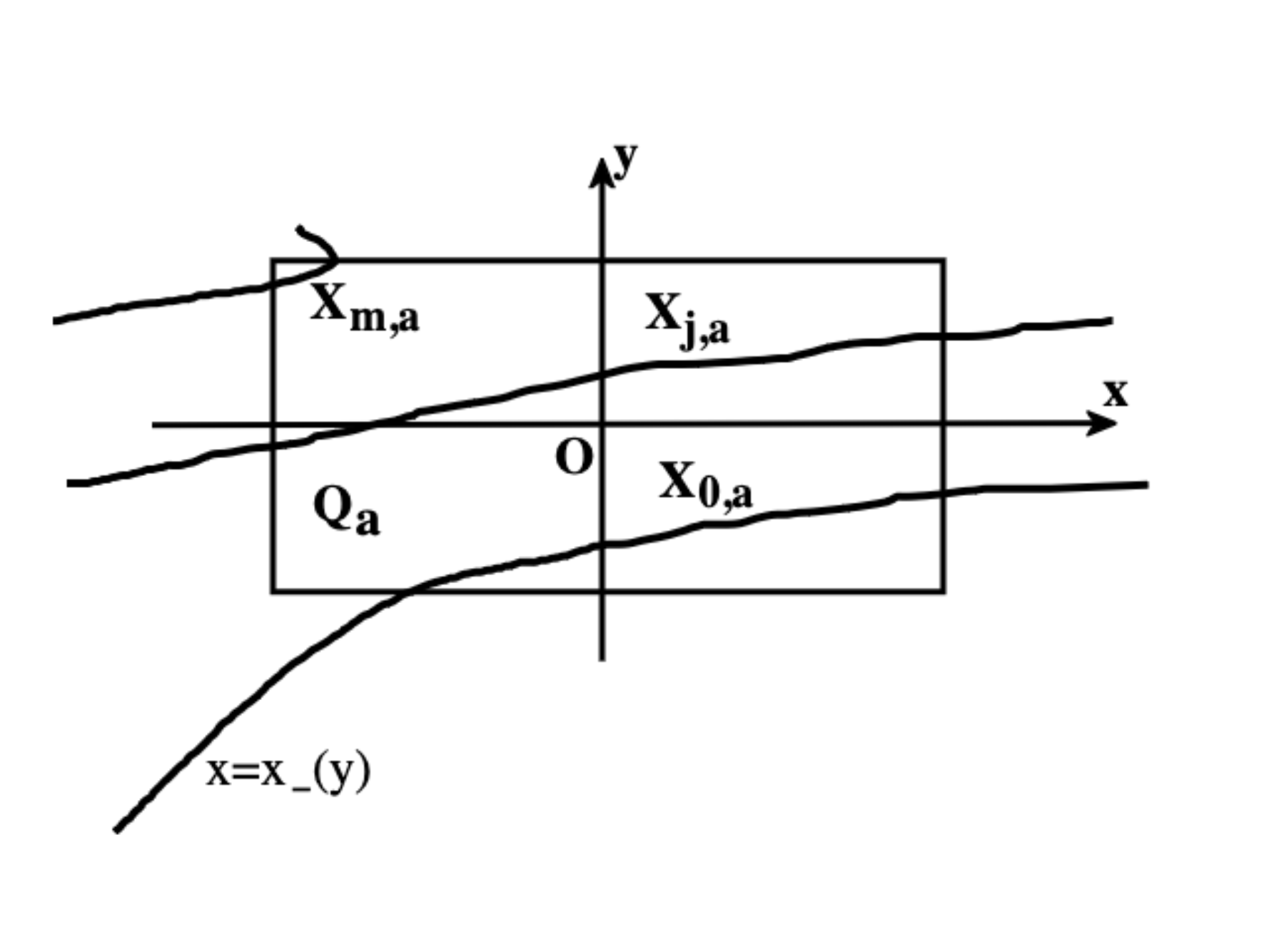, width=17em}
\caption{Graph of stable solution $x_-(y)$ of (\ref{ricric}) with $c\notin2\zz_{\geq0}+1$ and its crossing intervals $X_{j,a}$ with $Q_a$.}
\label{fig:xj}
  \end{center}
\end{figure} 
In the proof of Propositions \ref{cal1} and \ref{cal2} we use the next  proposition.

\begin{proposition} \label{pmon} Let $c_1<c_2$. Consider the backward orbits of the field $v_{Ric,c}=(f_c(x,y),1)$ giving Riccati equation (\ref{ricric}) for $c=c_1,c_2$. 

1) For every $z\in\rr^2_{x,y}$ the small arc of the  orbit starting at $z$ corresponding to $c=c_1$ lies on the right from 
that corresponding to $c=c_2$. 

2) Let $x_{1,\pm}(y)$, $x_{2,\pm}(y)$  be the stable and unstable solutions of Riccati equation (\ref{ricric}) corresponding to the values $c=c_1, c_2$. Then on every interval $(-\infty,-R)$, 
respectively $(R,+\infty)$ where both solutions $x_{1,-}$, $x_{2,-}$, respectively $x_{1,+}$, $x_{2,+}$ have no poles one has 
\begin{equation} x_{1,-}(y)<x_{2,-}(y), \ \ \ \text{ resp. } \ \ \ x_{1,+}(y)>x_{2,+}(y).\label{inx}\end{equation}
\end{proposition}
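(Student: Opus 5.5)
Statement 1) is a direct computation of directions, and Statement 2) follows from it by an orbit-comparison argument combined with Lemma \ref{lxyc}.

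\emph{Statement 1).} The field is $v_{Ric,c}=(f_c(x,y),1)$ with $f_{c_1}(x,y)=f_{c_2}(x,y)-(c_2-c_1)<f_{c_2}(x,y)$. We parametrize the backward orbit through $z=(x_0,y_0)$ by $y\le y_0$ and write it as $x_j(y)$, $j=1,2$. Then
$$x_j(y)=x_0-f_{c_j}(z)(y_0-y)+O((y_0-y)^2).$$
Since $f_{c_1}(z)<f_{c_2}(z)$, for $y<y_0$ close to $y_0$ we get $x_1(y)>x_2(y)$, which is exactly Statement 1). Equivalently, along a backward orbit the $c_1$-field turns to the right of the $c_2$-field.

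\emph{Statement 2), stable solutions.} Fix an interval $(-\infty,-R)$ on which $x_{1,-}$ and $x_{2,-}$ are finite. First we show that the graphs cannot cross there. Suppose that at some point $y_0<-R$ one has $x_{1,-}(y_0)=x_{2,-}(y_0)$. Let $x_2(y)$ be the $c_2$-solution through this point, so $x_2=x_{2,-}$. By Statement 1), the $c_1$-orbit $x_{1,-}$ lies strictly to the right of $x_{2,-}$ for $y$ slightly below $y_0$. Now set $\delta=x_{1,-}-x_{2,-}$. Then
$$\delta'=\delta\,(x_{1,-}+x_{2,-})-(c_2-c_1).$$
At any point where $\delta=0$ we get $\delta'=-(c_2-c_1)<0$. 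Hence every zero of $\delta$ is transversal with $\delta$ decreasing through it. So $\delta$ has at most one zero on the interval: a second zero would need an increasing crossing.

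Consequently, on $(-\infty,y_0)$ either $\delta>0$ throughout, or the inequality $x_{1,-}<x_{2,-}$ holds to the right of the unique zero. We therefore need to exclude $\delta>0$ near $-\infty$.

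\emph{Main obstacle: excluding $\delta>0$ near $-\infty$.} I would use the asymptotics $x_{j,-}(y)=y+O(1)$ from Lemma \ref{lxyc}. With them, $x_{1,-}+x_{2,-}\simeq 2y\to-\infty$. Suppose $\delta>0$ on $(-\infty,y_0)$. Then $\delta'<-(c_2-c_1)$ there, so $\delta$ increases as $y$ decreases, and it increases at least linearly. This does not yet give a contradiction by itself, so I would refine the asymptotics.

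Substituting $x=y+\eta$ gives $\eta'=2y\eta+\eta^2+c-1$. For the solution bounded as $y\to-\infty$ this yields
$$\eta=\frac{1-c}{2y}+o\!\left(\frac1y\right).$$
Hence $x_{1,-}-x_{2,-}=\dfrac{c_2-c_1}{2y}(1+o(1))<0$ for large negative $y$, which contradicts $\delta>0$.

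To justify the expansion of $\eta$, I would take the solution of $\eta'=2y\eta+\eta^2+c-1$ that stays bounded and check that $y\eta$ is bounded. Alternatively, I would use the domain-trapping argument of Proposition \ref{pogldom} with the narrower domain between $\Gamma_{c,l,-}$ and its translate by $C/|y|$.

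Combining the two parts, $\delta<0$ for large negative $y$ and $\delta$ has no increasing zero, so $\delta<0$ on the whole interval. This proves $x_{1,-}<x_{2,-}$.

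\emph{Statement 2), unstable solutions.} The inequality for $x_{j,+}$ follows from the symmetry $x_+(y)=-x_-(-y)$ of Theorem \ref{stunst}. Under $y\mapsto-y$ the inequality flips, which gives $x_{1,+}(y)>x_{2,+}(y)$ on $(R,+\infty)$.
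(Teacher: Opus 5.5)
Your proof is correct in structure but follows a different route from the paper. The step you single out as the main obstacle is not one.

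\textbf{The obstacle closes immediately.} Suppose $\delta>0$ on some $(-\infty,Y)$, and take $Y'\leq Y$ with $x_{1,-}+x_{2,-}<0$ on $(-\infty,Y')$. As you note, there $\delta'<-(c_2-c_1)$, so
$$\delta(y)\geq\delta(Y')+(c_2-c_1)(Y'-y)\to+\infty \quad \text{as } y\to-\infty.$$
But Lemma \ref{lxyc}, Statement 1) gives $x_{j,-}(y)=y+O(1)$, so $\delta=O(1)$. This is already the contradiction you need.

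\textbf{The refined-asymptotics detour is under-justified.} The expansion $\eta=\frac{1-c}{2y}+o(\frac1y)$ is true. However, neither justification you sketch proves it. Boundedness of $y\eta$, or trapping between $\Gamma_{c,l,-}$ and its shift by $C/|y|$, only gives $\eta=O(\frac1y)$. Each correction is of the same order $\frac1{|y|}$ as the difference $\frac{c_2-c_1}{2y}$ you want to detect, so you would need the exact coefficient. That could be obtained by trapping between the curves $x=-\sqrt{y^2-c}-\frac{1\pm\epsilon}{2|y|}$. Simply drop the detour and use boundedness. Your Statement 1) and the reduction of the unstable case via $x_+(y)=-x_-(-y)$ match the paper.

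\textbf{Comparison with the paper.} The paper also starts from a point $y_1$ with $x_{1,-}(y_1)>x_{2,-}(y_1)$. It then takes the auxiliary $c_2$-solution through $(x_{1,-}(y_1),y_1)$. Lying to the right of $x_{2,-}$, this solution is not the stable one. By Lemma \ref{lxyc}, Statement 3), it is $\simeq -y$ and hence positive near $-\infty$. By Statement 1), the $c_1$-backward orbit of the same point, namely the graph of $x_{1,-}$, stays to its right. So $x_{1,-}$ would be positive near $-\infty$, contradicting $x_{1,-}\simeq y$.

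Thus the paper uses the qualitative dichotomy `stable solution or $\simeq -y$' for a third orbit. You instead compare the two stable solutions directly through $\delta'=\delta(x_{1,-}+x_{2,-})-(c_2-c_1)$. Your version needs only their leading asymptotics. It also makes the orbit-ordering step explicit, since every zero of $\delta$ is a strict down-crossing; the paper leaves this step as `by Statement 1)'.
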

\begin{proof} Statement 1) is obvious by monotonicity in $c$ of the function $f_c$. Let us prove Statement 2). 
It suffices to prove it for the sign $-$. 
Suppose the contrary: there exists a $y_0\in(-\infty,-R)$ such that $x_{1,-}(y_0)\geq x_{2,-}(y_0)$. 
Then below the value $y=y_0$ the graph of the function $x_{1,-}$ lies on the right from the graph of $x_{2,-}(y)$, since its 
initial condition $(x_{1,-}(y_0),y_0)$ lies on the right and 
by Statement 1). Fix a $y_1<y_0$ close to $y_0$: one has $x_{1,-}(y_1)>x_{2,-}(y_1)$. The backward orbit of the point 
$z=(x_{1,-}(y_1),y_1)$ under the field $v_{Ric, c_2}$ lies on the right from the graph of the function $x_{2,-}(y)$, since 
the initial condition already lies on the right.  It is a graph of a solution $x(y;c_2)$ of (\ref{ricric}) with $c=c_2$. Therefore, the function $x(y;c_2)$  is asymptotic to $-y$, as $y\to-\infty$, and hence, is positive on some interval $(-\infty,R_1)$, 
 by Lemma \ref{lxyc}, Statement 3). But the backward orbit of the point $z$ under the field $v_{Ric,c_1}$
lies on the right from the graph of positive function $x(y,c_2)$, by Statement 1). It coincides with the graph of the function $x_{1,-}(y)\simeq y$, which is thus negative for big negative $y$. The contradiction thus obtained proves Statement 2). 
\end{proof}

\begin{proof} {\bf of Propositions \ref{cal1} and \ref{cal2}.}  The stable and unstable solutions $x_\pm(y)$ depend continuously on the parameter. Each pole of every solution $x(y)$ is simple with residue -1, since 
$x(y)=-\frac{\psi'(y)}{\psi(y)}$ where $\psi(y)$ is a solution of  linear second order differential equation (\ref{qharm}) and hence, 
may have only simple zeros. Fix a $m\in\zz_{\geq0}$. For every $\var\in(0,1)$ as $c$ varies in $Z_{m,\var}$, one has $x_-(y)\simeq -y$, as $y\to+\infty$, since there are no heteroclinic 
connections and by Statement 3) of Lemma \ref{lxyc}. This together with Implicit Function Theorem implies that  poles of the solution $x_-(y)=x_-(y;c)$ depend continuously on $c\in Z_{m,0}$ and their number is constant. It remains to find their number. 
For $c=2m+1$ the solution $x_-(y)$ has $m$ finite poles, which are zeros of the Hermite polynomial $H_m(y)$ and are real, see (\ref{x-herm}). Fix a big number $R>0$ such that the restriction to $(-\infty,-R]$ of the solution $x_-(y;2m+1)$ 
has no poles; then the same holds for the restriction to $[R,+\infty)$ of the function $x_+(y;2m+1)$. In addition we assume that $x_-(R;2m+1)$ is finite. As $c$ becomes slightly smaller than $2m+1$, the graph of the solution $x_-(y;c)$ over $(-\infty,-R]$ lies 
on the left from that of $x_-(y;2m+1)$, by Proposition \ref{pmon}, Statement 2). The same inequality remains valid on 
all of $\rr_y$ with "right" with respect to the $x$-coordinate on $\rr^2$ being replaced by "right" on the cylinder $\rp^1\times\rr_y=(\rr\slash\pi\zz)\times\rr_y$ with respect to the circular angular coordinate $2\arctan x(\mo2\pi)$. This follows from extended Statement 1) of Proposition \ref{pmon}  for arbitrary infinite point  
$z=(\infty,y)$ and local coordinates $(-\frac1x,y)$ on its neighborhood, where "on the right" means with 
respect to the coordinate $-\frac1x$. Finally, the graph of the function $x_-(y,c)$ remains on the left from that of $x_-(y,2m+1)$. Hence,  
\begin{equation}x_-(R;c)<x_-(R,2m+1)=x_+(R,2m+1)<x_+(R,c),\label{inec+}\end{equation}
by Proposition \ref{pmon}, Statement 2).  Therefore, the solution $x(y;2m+1)$ with initial condition $x(R;2m+1)=x_-(R,c)$ 
is less than $x_+(y;2m+1)$ for $y\geq R$.  But $x_-(y,c)<x(y;2m+1)$ for $y>R$, since $c<2m+1$, as in Proposition \ref{pmon}, Statement 1). 
Thus, $x_-(y;c)<x_+(y;2m+1)$ on $(R,+\infty)$, and hence, $x_-(y;c)$ has no poles  $y\geq R$, by Lemma 
\ref{lxyc} and (\ref{atinf}). It is different from $x_+(y;c)$ by (\ref{inec+})  and hence, 
is asymptotic to $-y$. Finally, the only poles of the solution $x_-(y;c)$ are small perturbations of those of $x_-(y;2m+1)$. 
Hence, their number is equal to $m$. Proposition \ref{cal1} is proved. 

Proposition \ref{cal2} then follows by 
compactness of the subsets $Z_{m,\var}$. In more detail,  fix an $a$ bigger than the maximal module of a pole of 
the solution $x_-(y;c)$ with $c\in Z_{m,\var}$. Moreover, we can achieve that Statements (iv) and (v) of Proposition \ref{cal2} hold for all $c\in Z_{m,\var}$, again by compactness and uniform asymptotics $x_-(y)=\mp y(1+o(1))$, as  
$y\to\pm\infty$ for $c\in Z_{m,\var}$. Then the other  Statements (i)--(iii) hold automatically, except for monotonicity statement in (iii). Let us prove a stronger statement: 
the restriction of a solution $x(y)$  of (\ref{ricric}) to an arc between its two neighbor finite poles $y_1<y_2$
is strictly increasing. Indeed, this arc starts at a point $(\infty,y_1)$ and goes in the $x$-coordinate from $-\infty$ to $+\infty$. 
 If, to the contrary, the derivative $x'(y)$ were non-positive at some $y\in(y_1,y_2)$, then the above arc would enter the domain 
 $\{ f_c(x,y)<0\}$. Then it enters its upper part $\Omega_{c,+}$, since the vector field $v_{Ric,c}$ is directed vertically up on 
 $\partial\Omega_{c,-}$, hence, outside $\Omega_{c,-}$. But the field $v_{Ric,c}$ is directed inside $\Omega_{c,+}$ on its lateral sides for the same reason. Hence, the arc in question will remain in $\Omega_{c,+}$ in the future and will not go to infinity in finite time, -- a contradiction. Proposition \ref{cal2} is proved.
 \end{proof}

\subsection{Stable slow flowboxes in the Riccati chart. Convergence to the stable solution}
Here we deal  with a slow-fast system (\ref{jos2g}) on a neighborhood of the point $O=(0,0)\in\rr^2_{\theta,\tau}$ with  right-hand side $f_{s;\omega}(\theta,\tau)$ having a horizontally positive germ at $O$, see Definition \ref{horpos}. We pass to the following {\it compact restriction:}  
\begin{equation} s \text{ lies in a compact subset  of the ambient parameter space}.
\label{compc}\end{equation}
In what follows, whenever the contrary is not specified, by $U$ we will denote the latter compact subset. In the case of the RSJ model, see Theorem \ref{thm1}, in the coordinates $(c_+,c_-)$ it will be 
some of the subsets $Z_{m,\var}\times Z_{k,\var}\subset\rr^2_{c_+,c_-}$. 

Without loss of generality we consider that the Hessian form at $O$ of the function $f_{s}=f_{s;0}(\theta,\tau)$ 
is equal to $\alpha(d\theta^2-d\tau^2)$, $\alpha=\alpha(s)>0$. 
One can achieve this by $s$-depending linear change of variables of type 
$(\theta,\tau)\mapsto(\wt\theta,\wt\tau)$, 
$(\theta,\tau)=(\wt\theta-\mu\wt\tau,\la\wt\tau)$. Here $\mu$ is the same, as in (\ref{varch}), and $\la=\frac{\la_2}{\la_1}$. 
In what follows the latter new variables $(\wt\theta, \wt\tau)$ will be denoted by $(\theta,\tau)$. Moreover, we can and will consider that $\alpha=\frac12$, as in the RSJ model (\ref{jos2}), applying homothety. Thus, we can take Riccati coordinates $(x,y)=(\frac\theta{\sqrt{2\omega}},\frac\tau{\sqrt{2\omega}})$.

Fix a rectangle $W$ centered at $O$, 
$$W=\{|\theta|<2M, \ |\tau|<M\}\subset\rr^2_{\theta,\tau}, \ \ \ \ 0<M<\frac\pi3,$$ 
 on whose closure all the $f_{s;\omega}$ are well-defined. 
 We choose it so that 
 
 - for $\omega=0$ for every $s\in U$  the intersection with $\overline W$ of the zero locus 
 $C_{s;0}=\{ f_{s;0}=0\}$ consists of two curves  $C_{\mcl}$, $C_\mcr$ intersecting at $O$ that are graphs of $C^1$-smooth functions $\tau=\tau(\theta)$ such that each $C_{\mcl,\mcr}$ starts in the interior of  the lower side of $W$ and ends in the 
 interior of  its upper side;
 
 - the above curves intersect the $\tau$-axis only at $O$ and transversally. 
 
 - the gradient map $(\theta,\tau)\mapsto\nabla f_{s;0}(\theta,\tau)$ is a diffeomorphism on $\overline W$, and in particular, 
 nonzero outside $O$. 
 
 Existence of $W$ as above follows by  compactness and Hessian form type. 
 Set
$$C_{\mcl,\pm}=C_{\mcl}\cap\{\pm\tau<0\}, \ \ C_{\mcr,\pm}=C_{\mcr}\cap\{\pm\tau<0\}.$$
Recall that we name the curves $C_{\mcl}$, $C_{\mcr}$ so that $C_{\mcl,+}$ lies on the left from $C_{\mcr,+}$. 
 Let $\Xi_-$ denote the open curvilinear triangle 
   bounded by $C_{\mcr,-}$, $C_{\mcl,-}$ and the horizontal segment connecting their lower endpoints. For every $s\in U$ fix 
    a point $b_0=b_0(s)\in\Xi_-$ and a point $a_0\in W\cap\{\tau=\tau(b_1)\}$ separated from the point $b_1$ by $C_{\mcr,-}$.   
    Set 
    $$I_0=[a_0,b_0], \ \ \wh\tau_0:=\tau(a_0)=\tau(b_0).$$

Everywhere below whenever we work with Riccati coordinates $(x,y)$ each subset in $\rr^2_{x,y}$ is identified with 
the corresponding subset in $\rr^2_{\theta,\tau}$ via the coordinate change $(\theta,\tau)\mapsto(x,y)$. 
The upper and lower horizontal sides (bases) of the rectangles $Q_R$, $W$ are denoted by 
$$Q_R^{up}, \ W^{up}= \text{ the upper base; } \ \ Q_R^{low}, \ W^{low} = 
\text{ the lower base.}$$

The main result of this subsection is the next lemma. 
\begin{lemma} \label{lconv1}{\bf (First Main Lemma).} Let (\ref{jos2g}) be a horizontally positive germ of slow-fast family on a neighborhood of the point $O$, see Definition \ref{horpos}, satisfying compactness condition (\ref{compc}). Let   $(x,y)$ be the  corresponding Riccati coordinates. 
Let $W$ and $I_0$ be as above. 
For every $R>0$ big enough for every $\omega>0$ small enough depending on $R$ the following statements hold.

 1) The orbit  $\mco(I_0)$  crosses the line 
 $\{ y=-R\}$ by a segment contained in the interior of the lower base $Q_R^{low}$, on the left from the $y$-axis. 
 
 2) It crosses the rectangle $Q_R$ by orbits of (\ref{jos2g}) in $Q_R$ 
 that all converge to the graph of the stable solution $x_-(y)$ on (\ref{ricric}) in the Riccati chart $(x,y)$, as $\omega\to0$. 
 The convergence is uniform in the parameter $s$ as convergence of graphs of functions.  
 \end{lemma}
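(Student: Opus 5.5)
The plan is to split the orbit of $I_0$ into two regimes. In the first, the orbit is tracked in the original coordinates by standard slow-fast arguments on the stable branch $C_{\mcr,-}$, down to the scale $\tau\sim-R\sqrt{2\omega}$. In the second, inside $Q_R$, we work in the Riccati chart and invoke Proposition \ref{progenr} together with Lemma \ref{lxyc}, Statement 7).

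\textbf{Step 1: from $I_0$ to the line $\{y=-R\}$.} The endpoints of $I_0$ satisfy $f_{s;0}(a_0)>0>f_{s;0}(b_0)$. Hence for small $\omega$ the horizontal fast flow contracts $I_0$ onto a thin stable flowbox $F_-(I_0)$ around the arc of $C_{s;\omega}$ close to $C_{\mcr,-}$ (Proposition \ref{pflboxes} / standard Tikhonov theory). To control the flowbox all the way down to distance $R\sqrt{2\omega}$ from $O$, I would build a trapping region in the spirit of Proposition \ref{pogldom}, but in the $(\theta,\tau)$ variables. After the normalization $Hf_{s;0}(O)=\frac12(d\theta^2-d\tau^2)$, the branch $C_{\mcr,-}$ is, in the Riccati chart near $O$, asymptotic to the line $x=y$ with $y<0$, which is the left lower branch $\Gamma_{c,l,-}$ of the hyperbola. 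Take the domain bounded by the curve $\{f_{s;\omega}=0\}$ (near $C_{\mcr,-}$) and its horizontal left shift by $\delta(\tau)$, with $\delta(\tau)=\kappa\,\omega/|\tau|$ for $|\tau|$ small and $\delta$ of order $\omega$ (times a constant) away from $O$.

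On the left boundary one has $\partial f/\partial\theta\approx\theta\approx\tau$, so $f\approx|\tau|\delta\gg\omega\cdot(\text{slope})$ once $\kappa$ is large. Thus the field points inward there, exactly as in Proposition \ref{pogldom}. On the zero curve the field is vertical, hence also inward. The orbit of $I_0$ enters this domain after fast time $O(\omega^{-1}\cdot\text{const})$ and remains in it down to $\tau=-R\sqrt{2\omega}$. At that level its width in $x$ is $\delta/\sqrt{2\omega}\sim \kappa/(2R)$, so the whole segment lies in $x\in[-R-1,-R+1]$, well inside $Q_R^{low}$ and to the left of the $y$-axis. This proves 1). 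The main technical point here is uniformity in the $O(\omega)$ and higher-order terms of $f_{s;\omega}$ on the intermediate scale $R\sqrt\omega\le|\tau|\le M$. I would handle it using the diffeomorphism property of $\nabla f_{s;0}$ on $\overline W$, which gives $|\partial_\theta f|\ge\sigma|\tau|$ near $C_{\mcr,-}$ uniformly in $s\in U$.

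\textbf{Step 2: inside $Q_R$.} By Proposition \ref{progenr}, in Riccati coordinates the line field converges to $v_{Ric,c(s)}$ uniformly on compacts. Let $R'\gg R$ and apply Step 1 with $R'$: the orbit crosses $\{y=-R'\}$ at points $(x_n,y_n)$ with $x_n\le0$ and $x_n=-R'+O(1)$. Solutions of the limiting Riccati equation through such points converge, uniformly in $c$ on compact sets, to $x_-(y)$ on $[-R,R]$ as $R'\to\infty$; this is Lemma \ref{lxyc}, Statement 7). By continuous dependence of solutions on the vector field, the true orbits on $[-R',R]$ are then close to the Riccati solutions as $\omega\to0$. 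A diagonal choice of $R'=R'(\omega)\to\infty$ slowly as $\omega\to0$ then gives 2).

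One caveat: $x_-$ may have poles inside $Q_R$, so convergence should be understood on the cylinder $\rp^1\times\rr_y$, where the extended field is smooth near $x=\infty$ (see (\ref{atinf})). The statement about graphs in $Q_R$ follows from this.

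\textbf{Main obstacle.} The hardest part is the matching between the two regimes: the trapping estimate of Step 1 must hold uniformly down to the scale $R'\sqrt\omega$ with $R'$ growing. I would first try the explicit barrier $\delta(\tau)=\kappa\omega/|\tau|$. I would verify the inward-pointing inequality by writing $f_{s;\omega}=\frac12(\theta^2-\tau^2)+O(\omega)+O(|\theta|^3+|\tau|^3)$, noting that the relative errors are small precisely when $|\tau|\gg\sqrt\omega$.
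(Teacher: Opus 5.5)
Your proposal is correct and is essentially the paper's argument. You trap $\mco(I_0)$ near the stable branch down to $\{y=-R\}$, where the paper uses the two-sided trapezoid $\Pi_\delta$ between $\gamma\pm(\delta\sqrt{2\omega},0)$ (Propositions \ref{proapr}, \ref{cl1} and the classical flowbox of Proposition \ref{pflboxes}) and your one-sided barrier of width $\kappa\omega/|\tau|$ is a sharper variant of the same estimate; you then identify the limit as $x_-$ via Lemma \ref{lxyc}, the paper through a subsequence argument and Statement 1 of that lemma, you through Statement 7 and a diagonal choice of $R'$.

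Drop the cylinder remark. Proposition \ref{progenr} controls the field only on compacts of $\rr^2_{x,y}$, so the true orbits converge only to the first arc $X_{0,R}$, up to its exit from $Q_R$. That is all the lemma asserts, and the passage past a pole is handled at the next flow point by Lemma \ref{ml2}.
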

\begin{proof} The classical slow-fast theory yields that as $\omega\to0$, the  orbit  $\mco(I_0)$ first becomes an exponentially narrow flowbox that drifts along a curve in the zero locus 
$$C_{s;\omega}=\{ f_{s;\omega}(\theta,\tau)=0\},$$ 
see Proposition \ref{pflboxes} below. Lemma \ref{lconv1} describes how the above flowbox extends further on, as the drift curve  approaches $O$. In the proof of the lemma we use an a priori bound  given by the next technical proposition. 

\begin{proposition} \label{proapr} There exists a $\sigma_1>0$ independent on $(s,\omega)$  
 such that for every $R>0$ big enough and $\omega$ small enough depending on $R$ the following statements hold.
 
1) The complement $(C_{s;\omega}\cap W)\setminus Q_R$ consists of four arcs that are graphs of $C^1$- functions 
 $\theta=\theta(\tau)$. Two arcs start in the interior of the base  $W^{low}$ and end in the interior of  
 $Q_R^{low}$; they are separated by the $\tau$-axis. Two other arcs have the same properties with "low" changed by "up". 
 See Fig. \ref{fig:piw}.  
 
 2)  The derivatives of the above functions $\theta(\tau)$ have modules less than $\sigma_1^{-1}$, i.e., the components 
 in question  have slopes greater than  $\sigma_1$.  
 
 3) For every $\eta>0$ for every $R$ big enough depending on $\eta$ and $\omega>0$ small enough depending on 
 $\eta$ and $R$   the coordinates of the above endpoints on the sides of $Q_R$ have $x$-coordinates $\eta$-close to 
 $\pm R$. 
 \end{proposition}

\begin{proof} Statements 1), 2) of the proposition are obvious, if we replace $Q_R$ by any fixed rectangle homothetic  to  $W$ and small enough, independent on $\omega$.  Let us prove them for $Q_R$. First let us prove that in the complement $W\setminus Q_R$ the slope of the curve $C_{s;\omega}$ is bounded from below. 
The critical point $O_\omega$ of $f_{s;\omega}$, which is $O_0=(0,0)$ for $\omega=0$, 
depends continuously on the parameters and has continuous derivative in $\omega$, by the Implicit Function Theorem and since the gradient map is a local diffeomorphism on a fixed neighborhood of $O$. 
Therefore, as $\omega$ becomes non-zero, the coordinates of the critical point  $O_\omega$ are $O(\omega)$. 
For every $\omega$ small enough this is a unique critical point of the function $f_{s;\omega}$ in $W$, and  it is contained in $Q_1$, by the above asymptotics.

Recall that by assumption, the Hessian matrix of the function $f_{s;\omega}(\theta,\tau)$ at $O$ has form $A= \diag(1,-1)$.  
Take a big $R>1$ and a $z\in W\setminus Int(Q_R)$ at which $f_{s;\omega}(z)=0$. Then 
\begin{equation}0=f_{s;\omega}(z)=f_{s;\omega}(O)+\nabla f_{s;\omega}(O)z+\frac12<Az,z> +o(||z||^2),\label{0hess}\end{equation}
 $f_{s;\omega}(O)=O(\omega)$, since $f$ has a continuous derivative  in $\omega$. One also has  
 \begin{equation}\nabla f_{s;\omega}(O)=O(||O_\omega||)=O(\omega),\label{nablaf}\end{equation}
 by the above discussion. Let $v$ denote the unit vector directing the radial line through $z$. On the other hand,  $||z||\geq R\sqrt{2\omega}$, hence the middle quadratic term in the right-hand side in (\ref{0hess}) is no less than $R^2\omega<Av,v>$. 
 Therefore, if $R$ is big enough, the middle term  in (\ref{0hess}) becomes dominating unless $v$ is close to 
 a unit vector $w$ that is isotropic, i.e., for which  $<Aw,w>=0$. At each point $z$ of an isotropic line $L$ 
 the gradient $Az$ of the  form $\frac12<Az,z>$ is orthogonal to $L$. It cannot be nearly vertical, since the isotropic lines have 
 angle bounded from below with the abscissa axis. This follows by compactness condition. One has 
  \begin{equation*}\nabla f_{s;\omega}(z)=\nabla f_{s;\omega}(O)+Az+o(z),\end{equation*} 
  which together with (\ref{nablaf}) and the above arguments implies that $Az$ is dominating there. 
  This implies that the gradient $\nabla f_{s;\omega}(z)$ has angle bounded from below with the ordinate axis at 
 all the points in the intersection $(W\cap C_{s;\omega})\setminus Int(Q_R)$, as does $Az$. Therefore, the tangent line to 
 $C_{s;\omega}$ has there angle bounded from below with the abscissa axis. Uniform lower boundedness of slope is proved.
 
The above domination argument  also implies the inequality 
 \begin{equation} ||\nabla f_{s;\omega}(z)||\geq\sigma_0||z|| \ \text{ for every } \ z\in (C_{s;\omega}\cap W)\setminus Q_R\label{nablafs}\end{equation}
 with some  $\sigma_0>0$ independent on $R$ and $\omega$. It will be used further on.
 
 Let us now prove Statements 1) and 2). 
 In the Riccati coordinates the intersection $C_{s;\omega}\cap Q_R$  converges to the hyperbola  
$$\Gamma_c:=\{ x^2-y^2+c=0\}.$$
Take $R$ big enough so that  $\Gamma_c$ intersects the interior of each horizontal side of $Q_R$ at two distinct points and does not intersect its lateral sides. Then on each horizontal side the intersection points are separated by the $y$-axis. This implies that for every $\omega$ small enough the intersection 
$C_{s;\omega}\cap(W\setminus Q_R)$ is a one-dimensional manifold with eight endpoints: two on each horizontal side 
of each rectangle $W$, $Q_R$. This together with monotonicity proved above implies that it consists of four components: 
two components going from the upper side of $Q_R$ to that of $W$; two components going from the lower side of 
$W$ to that of $Q_R$. Each pair of upper, respectively lower components is separated by the $\tau$-axis, as are their endpoints 
on horizontal sides. Statements 1) and 2) are proved. Statement 3) is obvious with $C_{s;\omega}$ replaced by 
the hyperbola $\Gamma_c$, and it remains valid for $C_{s;\omega}$ as well, if $\omega$ is small enough. 
The proposition is proved.
\end{proof}
 
 In what follows by $\gamma=\gamma_{s;\omega}$ we denote the left lower component of the submanifold 
 $C_{s;\omega}\cap(W\setminus Q_R)$, see Statement 1) of the above proposition. For every $\delta\in(0,1)$ set 
 \begin{equation}\gamma^{\pm}_\delta=\gamma_{\delta;s;\omega}^{\pm}:=\gamma\pm(\delta\sqrt{2\omega},0).
 \label{gdelta}\end{equation}
 The  curves (\ref{gdelta}) start at  $W^{low}$ and end inside the  side $Q_R^{low}$, as $R$ is large enough and $\omega$ is small enough depending on $R$, by Proposition \ref{proapr}. Set 
 $$\Pi_{\delta}=\Pi_\delta(s;\omega):=\text{ the curvilinear trapezoid bounded by  } \gamma^{\pm}_\delta, \ 
 W^{low},\  Q_R^{low}.$$
\begin{proposition} \label{cl1} For every $\delta\in(0,1)$ for every $R>0$ big enough depending on $\delta$ for every $\omega>0$ small enough depending on 
$(R,\delta)$ the restriction of vector field (\ref{jos2g}) to the curves (\ref{gdelta}) is directed inside the domain $\Pi_{\delta}$. See Fig. \ref{fig:piw}
\end{proposition}
\begin{figure}
 \begin{center}
\epsfig{file=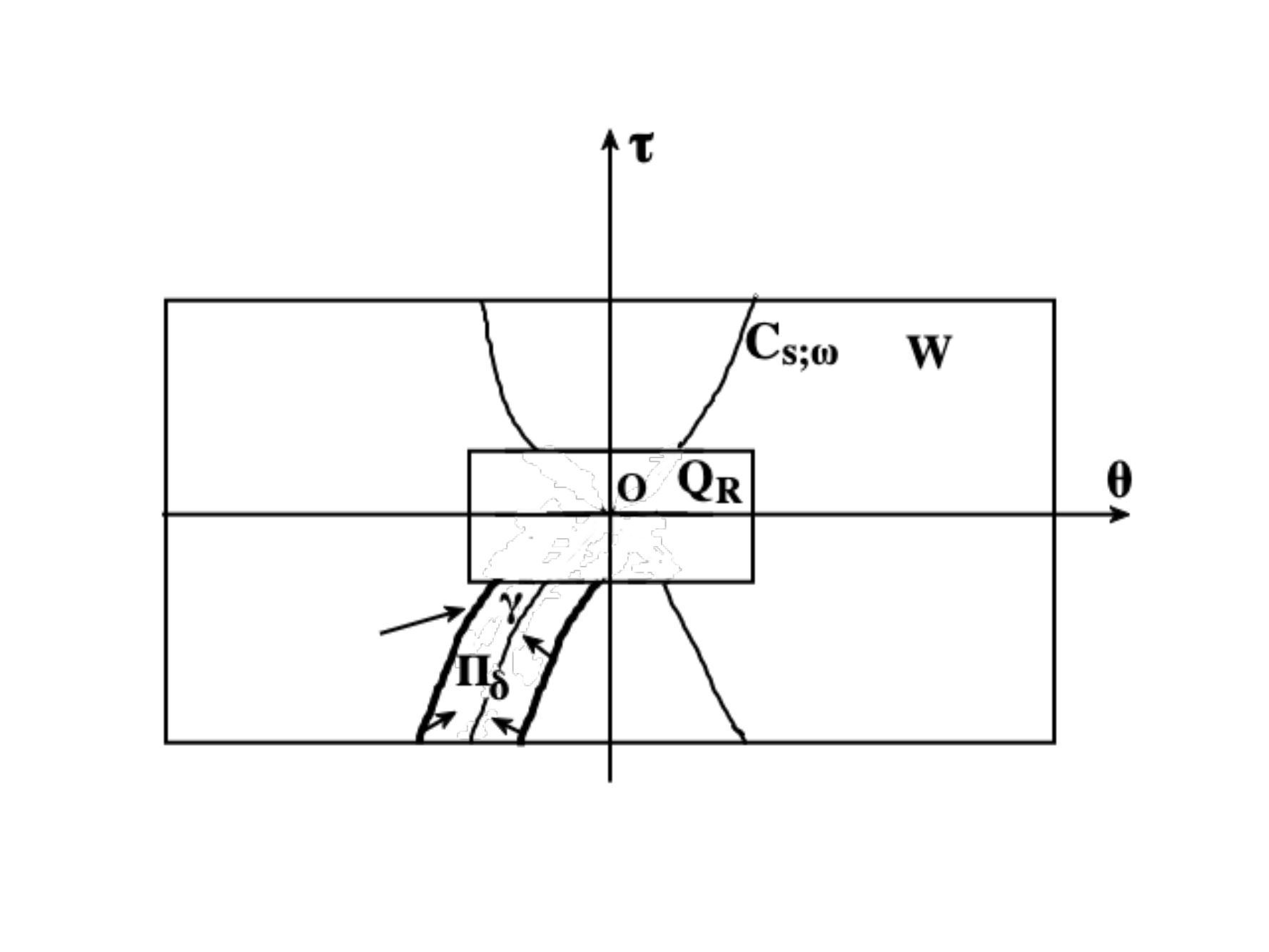, width=25em}
\vspace{-1.3cm}
\caption{The rectangles $W$, $Q_R$, the curve $\gamma$ and the trapezoid $\Pi_\delta$.}
\label{fig:piw}
  \end{center}
\end{figure} 
\begin{proof} 
We prove that the restriction to $\gamma^{-}_\delta$ of the field is directed to the right from the curve 
$\gamma^{-}_{\delta}$. The similar statement that on the curve $\gamma^{+}_\delta$ it is directed to the left is proved 
analogously. The field (\ref{jos2g}) is $(f_{s;\omega}(\theta,\tau),\omega)$. Its restriction to $\gamma$ is vertical and equal to $(0,\omega)$. For every $z\in\gamma$ one has 
\begin{equation}f_{s;\omega}(z)=0, \ f_{s;\omega}(z-(\delta\sqrt{2\omega},0))\geq-\delta\sqrt{2\omega}
\max\frac{\partial f_{s;\omega}(\theta,\tau)}{\partial\theta},\label{fsder}\end{equation}
where the maximum is taken over the segment connecting $z$ and $z-(\delta\sqrt{2\omega},0)$. We claim that 
\begin{equation}\max\frac{\partial f_{s;\omega}(\theta,\tau)}{\partial\theta}\leq-\sigma_0 R\sqrt{2\omega},\label{fs2}\end{equation}
where $\sigma_0$ is a constant independent on $R$ and $\omega$. Indeed, 
the latter derivative is the projection of the gradient to the horizontal axis. It is clearly negative 
at $z$. We claim that along the segment in question the latter projection of the gradient is negative  and its module  
is no less than the norm of the gradient times a universal constant. Indeed,   
on the curve $\gamma$ the angle of the gradient with the vertical axis  is bounded from below by a universal constant,  by Proposition \ref{proapr}, Statement 2). Its norm at $z\in\gamma$ is no less than $||z||\geq R\sqrt{2\omega}$ times a universal constant independent on $(R,\omega)$, see (\ref{nablafs}). As $z$ is changed by a vector with norm no greater than $\delta\sqrt{2\omega}$, 
the gradient is changed by a vector with norm no greater than  $\delta\sqrt{2\omega}$ times a universal constant, since the derivatives of the gradient are uniformly bounded on $W$. Therefore, if $R$ is big enough and $\omega$ is small enough depending on 
$R$, then along the above segment the norm of the gradient is bounded from below by $R\sqrt{2\omega}$ times a universal constant and its angle with the vertical axis remains bounded from below by some universal constant. 
Thus, its projection remains negative and its module is bounded from below by $\wt\sigma_0 R\sqrt{2\omega}$ with a universal constant $\wt\sigma_0$. This proves (\ref{fs2}). Substituting it to (\ref{fsder}) we get that 
\begin{equation} f_{s;\omega}(z)\geq2\wt\sigma_0\delta R\omega.\label{fs3}\end{equation}

The slope of the curve $\gamma^-_\delta$ at $z_\delta$ is equal to that of the curve $\gamma$ at $z$, and hence, 
is bounded from below by a universal constant $\sigma_1$.  
 Choosing $R$  big enough 
so that $2R\delta\wt\sigma_0>\sigma_1^{-1}$ yields that the field is directed to the right from the curve $\gamma^-_\delta$. 
This proves Proposition \ref{cl1}. 
\end{proof} 

Fix  arbitrary $\tau_1$, $\tau_2$, $\wh\tau_0<\tau_1<\tau_2<0$, and set 
    \begin{equation}\mcs:=\{ \tau_1\leq\tau\leq\tau_2\}.\label{stripi}\end{equation}

\begin{proposition} \label{pflboxes} The orbit $\mco(I_0)$   intersects $\mcs$ by a narrow flowbox denoted by $F_-(I_0)$ and called {\bf stable flowbox} that is $O(\omega)$-close to $\gamma$. It is fibered by horizontal segments of widths no greater than $\exp(-\frac d{\omega})$, with $d>0$ uniformly bounded from below. See Fig. \ref{fig:slow} for family (\ref{jos2}) with $c_{\pm}>0$, the base point $(\pi,0)$ and $I_0=[0,\pi]\times\{\wh\tau_0\}$, $\wh\tau_0=-\frac23\pi$, $\tau_1=-\frac{\pi}2$, $\tau_2=-\frac{\pi}3$. 
\end{proposition}

The proposition follows from the classical theory of slow-fast systems, see, e.g., 
\cite[theorem 3 and proposition 4]{ilguk}.
 \begin{figure}
 \begin{center}
 \vspace{-0.7cm}
\epsfig{file=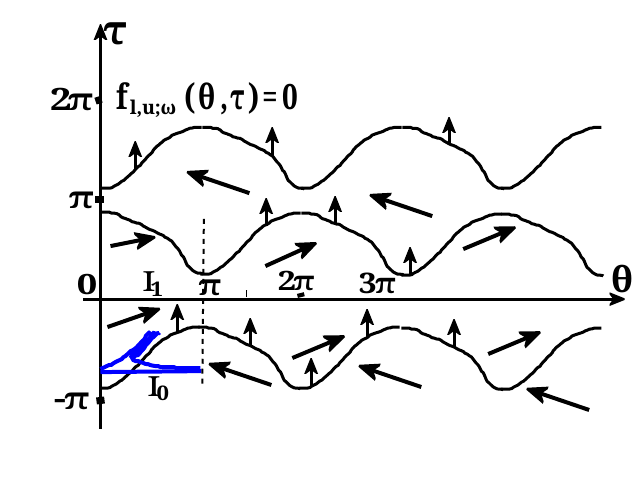, width=17em}
\caption{Phase portrait of field (\ref{jos2}) with $c_{\pm}=u\pm\ell>0$ and small $\omega>0$. The intersection of the orbit $\mco(I_0)$ with the strip $\mcs$ is the stable flowbox.}
\label{fig:slow}
  \end{center}
\end{figure}

\begin{corollary} If $R$ is big enough, as in the above propositions, then for every $\omega$ small enough the  flowbox 
$F_-(I_0)$ lies in the curvilinear trapezoid $\Pi_\delta$ and its forward orbit exits $\Pi_\delta$ through its upper base, see 
Fig. \ref{fig:pi}.  
\end{corollary}
\begin{proof} The flowbox lies on horizontal distance $O(\omega)$ from $\gamma$. For small $\omega$ it lies in $\Pi_\delta$:   $\delta\sqrt{2\omega}>O(\omega)$. Its forward orbit exits $\Pi_\delta$ through 
its upper base, since the field is directed inside $\Pi_\delta$ on its lateral sides and $\dot\tau=\omega>0$. 
\end{proof}
 \begin{figure}
 \begin{center}
\epsfig{file=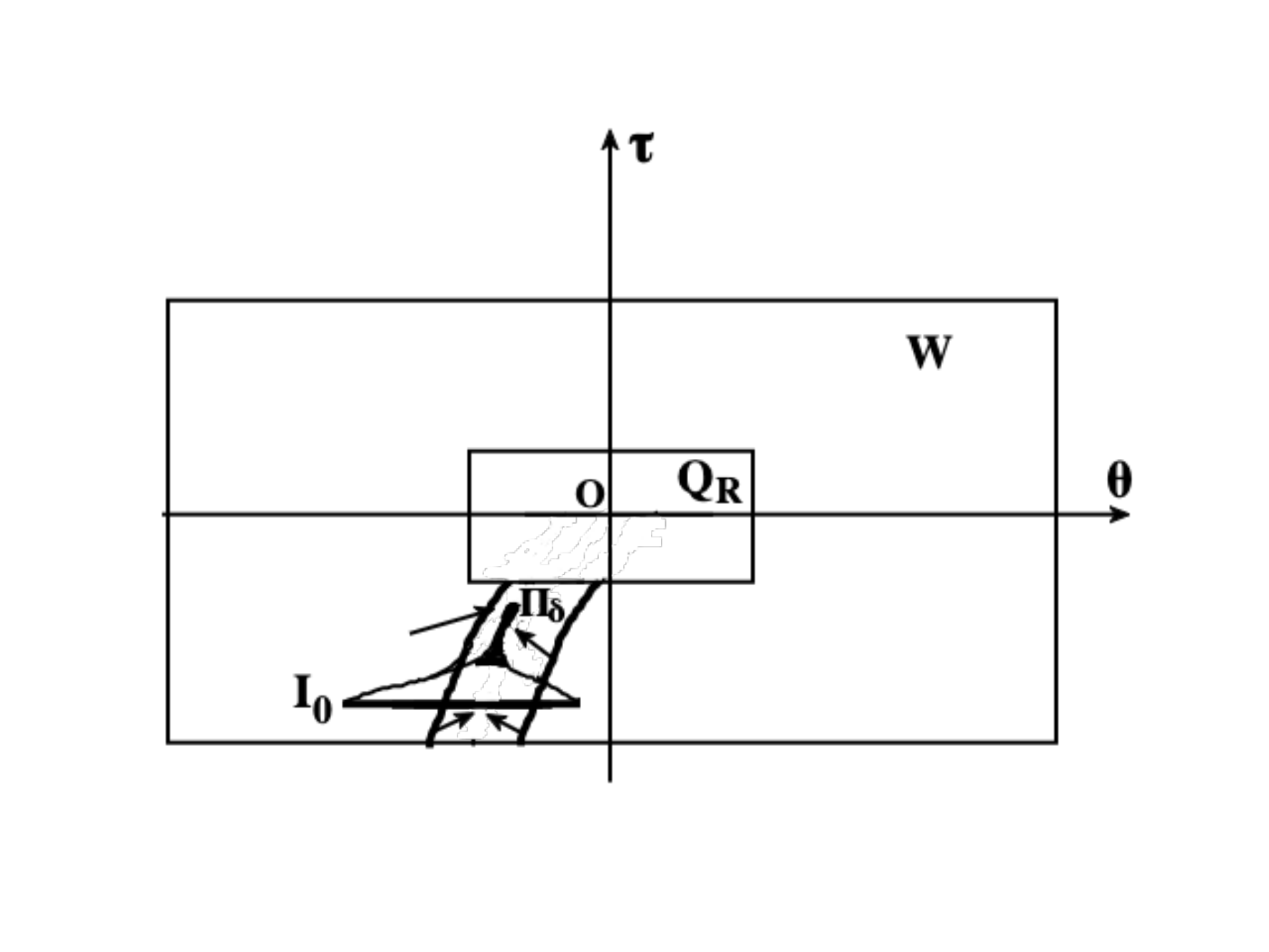, width=25em}
\vspace{-1.3cm}
\caption{The curvilinear trapezoid $\Pi_\delta$ contains the stable flowbox of the segment $I_0$ marked in bold thick.}
\label{fig:pi}
  \end{center}
\end{figure} 

Let us return to the proof of Lemma \ref{lconv1}. Fix a sequence $\omega_n\to0$ and a sequence  $\mco_n$ of forward extensions of orbits 
in the stable flowboxes $F_-(I_0)=F_-(I_0;\omega_n)$. Take a big $R>0$. For every $n$ large enough the orbit  
$\mco_n$ intersects the bottom side of the rectangle $Q_R$ at a point lying in the upper base of the curvilinear trapezoid $\Pi_\delta$, by the corollary. The latter upper base clearly lies in the left half-plane in the coordinates $(x,y)$, if $R$ is big enough, see Proposition \ref{proapr}, Statement 3). 
Hence, taking a subsequence one can achieve that in the 
Riccati chart the first arcs of intersections $\mco_n\cap Q_R$ converge to the graph of a solution $x=g(y)$ of Riccati equation (\ref{ricric}) starting at a point $(x_0,-R)$ in the bottom side, $-2R\leq x_0\leq 0$.  The above statements remain valid for $R$ arbitrary 
large and $n$ large enough depending on $R$. Hence, $g(y)$ is a limit of solutions with initial conditions 
$(x_k,-R_k)$ with $R_k\to+\infty$ and $x_k\leq0$. Therefore, $g(y)=x_-(y)$, by Lemma \ref{lxyc}, Statement 1). Lemma \ref{lconv1} 
is proved.
\end{proof}

\subsection{Crossings of neighborhoods of subsequent critical points. Convergence to pieces of the same solution}  
Consider a slow-fast family (\ref{jos2g}) of vector fields on the annulus $S^1\times(-b,b)$, 
$b>0$, $S^1=\rr\slash2\pi\zz$, i.e., $f_{s;\omega}(\theta,\tau)$ is $2\pi$-periodic just in $\theta$. We consider that it satisfies 
compactness condition (\ref{compc}),  
 the function $f_{s;0}(\theta,0)$ vanishes only at $\theta=0(\mo2\pi)$, and 
its germ at $O=(0,0)$ is horizontally positive, see Definition \ref{horpos}. Thus $f_{s;0}(\theta,0)>0$ 
for $\theta\in(0,2\pi)$, and $(0,0)$, $(2\pi,0)$ are  neighbor right flow points of its lifting to $\rr_{\theta}$. Without loss of generality we consider that the Hessian form of the function $f_{s;0}$ at $(0,0)$ has form $\alpha(d\theta^2-d\tau^2)$, $\alpha=\alpha(s)>0$, 
as in the previous subsection; $\alpha(s)$ is uniformly bounded from above and from below, by compactness. For example, system (\ref{jos2}) restricted to parameters $(\ell,u)$, $\ell+u\in Z_{m,\var}$, 
$\ell-u\in\zz_{k,\var}$, with $\theta$ replaced by the variable $\wt\theta=\theta-\pi$ satisfies the above conditions. Then 
the corresponding Riccati coordinates $(x,y)$ can be taken as 
$$(x,y)=(\sqrt{\alpha\omega^{-1}}\theta,\sqrt{\alpha\omega^{-1}}\tau).$$

     We consider the lifting of system (\ref{jos2g}) to the strip $\rr_{\theta}\times(-b,b)$. 
      Let $U$ be the parameter $s$ space: it is compact. 
     For every $s\in U$ let $c=c(s)$ be the parameter of the corresponding 
Riccati equation (\ref{ricric}), see (\ref{defc}): it is the same for the points $(0,0)$ and $(2\pi,0)$. 
Let $(x,y)$, $(\tilde x,\tilde y)$ be the corresponding Riccati coordinates at $(0,0)$ and $(2\pi,0)$ respectively that differ by translation by $2\pi$: $(\tilde x,\tilde y)=(x(\theta-2\pi,\tau), y(\theta-2\pi,\tau))$. For every $a>0$ by $Q_a$, $\widetilde Q_a$ 
we will denote the rectangle $Q_a$ considered in the coordinates $(x,y)$ and $(\tilde x,\tilde y)$ respectively, i.e., centered 
at $(0,0)$ and $(2\pi,0)$ respectively. Thus, 
$$Q_a=\{|\theta|<2\sqrt{\alpha^{-1}\omega}a, \ |\tau|<\sqrt{\alpha^{-1}\omega}a\}.$$
We identify every subset in $\wt Q_a$ with its image in $Q_a$ by the translation $z\mapsto z-(2\pi,0)$, or equivalently, by the Riccati coordinate equality $(x,y)=(\wt x,\wt y)$. 

The main result of this subsection is the next lemma. 
\begin{lemma} \label{ml2} {\bf (The Second Main Lemma).} Let $s_*\in U$, and let $g(y)$ be a solution of the Riccati equation (\ref{ricric}) with $c=c(s_*)$.  Let $a>0$, and consider the intersection of the graph 
$\{ x=g(y)\}$ with the rectangle $Q_a$ in the Riccati coordinates. Let us order  components of the intersection 
in the sense of orientation 
of the graph by the $y$-coordinate. Let some two subsequent components $X_0=X_{0,a}$, $X_1=X_{1,a}$ be graphs of restrictions of the solution 
$g(y)$ to two intervals $(\alpha_0,\beta_0)$, $(\alpha_1,\beta_1)$ separated by exactly one pole $y_*\in(\beta_0,\alpha_1)$ of $x(y)$; then $X_0$ ends at the right lateral side $Q_a^{right}$ of $Q_a$ 
and $X_1$ starts at its left lateral side $Q_a^{left}$. 
Consider arbitrary converging 
sequences $s_n\to s_*$,  $\omega_n\to0$ and a sequence of orbits $\mco_{n,0}$ 
of systems (\ref{jos2g}) in $Q_a$ with $s=s_n$, $\omega=\omega_n$. Let $\mco_{n,0}\to X_0$ in the Riccati coordinates $(x,y)$. Then for  every $n$ large enough there exists a 
forward  extension $\wh\mco_n$ of the arc $\mco_{n,0}$ that intersects  $\widetilde Q_a$ by an arc $\mco_{n,1}$  converging to $X_1$, as $n\to\infty$, see the above identification. Namely, each $\wh\mco_n$ consists of three subsequently adjacent arcs: the arc $\mco_{n,0}$ ending 
at  $Q_a^{right}$;  
an  arc starting at its endpoint and ending at $\widetilde Q_a^{left}$, diffeomorphically projected to  the segment of the $\theta$-axis connecting $Q_a^{right}$ and $\widetilde Q_a^{left}$; the arc $\mco_{n,1}$. 
\end{lemma}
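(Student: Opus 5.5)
The plan follows the sketch of Step 4 in Subsection 1.3. Enlarge the rectangle: fix $R>a$ large and work with $Q_R$, $\wt Q_R$. For $R$ big, the solution $g$ has only the pole $y_*$ between $X_0$ and $X_1$. The arcs $X_{0,R}\supset X_0$ and $X_{1,R}\supset X_1$ leave, respectively enter, $Q_R$ through its right and left lateral sides, at heights $y_1(R)$, $y_2(R)$ with $y_1(R)<y_*<y_2(R)$. Since $g$ has residue $-1$ at $y_*$, one has $g(y)\approx -1/(y-y_*)$ near the pole, so $|y_{1,2}(R)-y_*|=O(1/R)$. The convergence $\mco_{n,0}\to X_0$ on compact parts of $Q_a$ together with continuous dependence of Riccati solutions on initial data and parameters (Proposition \ref{progenr} gives uniform convergence of the line fields on $Q_R$) shows the following. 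For $n$ large, the forward extension of $\mco_{n,0}$ stays close to $X_{0,R}$ in $Q_R$ and exits through $Q_R^{right}$ at height $y_1^{(n)}$ with $|y_1^{(n)}-y_1(R)|<1/R$.

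The main step is the transit through the band between $Q_R^{right}$ and $\wt Q_R^{left}$, i.e. $\theta\in[2R\sqrt{\omega/\alpha},\,2\pi-2R\sqrt{\omega/\alpha}]$, $|\tau|\le$ const$\cdot R\sqrt\omega$. Here I would prove an a priori bound $f_{s;\omega}(\theta,\tau)\ge\sigma_1\min(\theta^2,(2\pi-\theta)^2)$ on trapezoids with vertical bases: one base is a subsegment $\{|y|\le K\}$ of $Q_R^{right}$, the other lies on $\{\theta=\pi\}$; symmetrically for $\wt Q_R^{left}$. The constant $\sigma_1>0$ must be independent of $R,\omega$. Near $O$ this follows from the Taylor expansion $f=\alpha(\theta^2-\tau^2)+O(\omega)+o(\theta^2+\tau^2)$. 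On such trapezoids $|\tau|\le\kappa|\theta|$ with $\kappa<1$ small, because the trapezoid sides have slopes close to zero, so $|y|\lesssim K+\kappa(x-2R)$; moreover $O(\omega)\ll\theta^2$ since $\theta\ge 2R\sqrt{\omega/\alpha}$. Away from $O$ the bound follows from $f_{s;0}(\theta,0)>0$ on $(0,2\pi)$, compactness and continuity in $\omega$. Then along the orbit $d\tau/d\theta=\omega/f$, so the increment of $\tau$ is at most $\omega\int_{2R\sqrt{\omega/\alpha}}^{\infty}\frac{d\theta}{\sigma_1\theta^2}\cdot 2=O(\sqrt\omega/R)$. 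In Riccati units the increment of $y$ is therefore $O(1/R)$, which also shows a posteriori that the orbit stays inside the trapezoids. Since $f>0$, the arc is a graph over $\theta$, giving the required diffeomorphic projection. I expect this bound, with constants uniform in $R,\omega$ and $s$ near $s_*$, to be the main obstacle. I would handle it by splitting the band into $|\theta|\le\theta_0$, near $0$ or $2\pi$, where the Hessian dominates, and $\theta_0\le\theta\le 2\pi-\theta_0$, where $f$ is bounded below.

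Consequently the orbit enters $\wt Q_R^{left}$ at $\wt y_2^{(n)}$ with $|\wt y_2^{(n)}-y_*|=O(1/R)$. Near the line $x=\infty$ I would use the chart $(-1/x,y)$ on the cylinder $\rp^1\times\rr_y$. The points $(\pm 2R,y)$ with $y$ within $O(1/R)$ of $y_*$ are $O(1/R)$-close to $(\infty,y_*)$, and the Riccati flow is smooth and transversal to the infinity line there, by (\ref{atinf}). Hence the solution of (\ref{ricric}) with $c=c(s_*)$ through $(-2R,\wt y_2^{(n)})$ is $\epsilon(R)$-close to $g$ on $[\alpha_1,\beta_1]$, with $\epsilon(R)\to0$ as $R\to\infty$. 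The convergence of the line fields in $\wt Q_R$ then makes the arc of $\wh\mco_n$ in $\wt Q_a$ close to $X_1$. Finally I would take $R\to\infty$ slowly with $n$ (a diagonal argument) to get $\mco_{n,1}\to X_1$.
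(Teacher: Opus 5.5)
Your proposal is correct and follows essentially the paper's proof. The key steps match: an a priori bound $f_{s;\omega}>\sigma_1\theta^2$ outside $Q_R$ near the $\theta$-axis, integration of $d\tau/d\theta=\omega/f$ to bound the $y$-increment of the transit arc by $O(1/R)$, closeness of the entry point in $\wt Q_R^{left}$ to $(\infty,y_*)$ in the cylinder $\rp^1\times\rr_y$, and then letting $R\to\infty$. The only minor difference is how the transit orbit is confined: you use a bootstrap in an $O(\sqrt\omega)$-thin horizontal strip, whereas the paper uses the inward-pointing lateral sides of the sector trapezoids $S_R$ and $S'_R$ (the latter in backward time) together with the inclusion $J_R\Subset J_R'$ on $\{\theta=\pi\}$.
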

As is shown below, the lemma is implied by the two following propositions.
\begin{proposition} In the conditions of Lemma \ref{ml2} the arc $X_0$ ends at $Q_a^{right}$ and $X_1$ starts at 
$Q_a^{left}$.
\end{proposition}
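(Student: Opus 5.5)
The plan is to use the geometry of a pole of a Riccati solution on the cylinder $\rp^1\times\rr_y$, together with the fact that between two neighbor poles a solution is strictly increasing. The key input is the monotonicity argument from the proof of Proposition \ref{cal2}: the graph of $g$ over the interval between two consecutive poles goes from $-\infty$ to $+\infty$ and is strictly increasing. That argument uses only that the vector field $v_{Ric,c}$ is vertical on $\partial\Omega_{c,\pm}$ and that an arc entering $\Omega_{c,+}$ stays there. I will reuse it, together with the local behaviour of $g$ near a pole.

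First, I localize near the pole $y_*$. The pole is simple with residue $-1$, since $g=-\psi'/\psi$ with $\psi$ having a simple zero. Hence $g(y)\to+\infty$ as $y\to y_*^-$ and $g(y)\to-\infty$ as $y\to y_*^+$. This is consistent with (\ref{atinf}): the field crosses the infinity line from $\rr_+$ into $\rr_-$. Consequently, on $(\beta_0,y_*)$ the graph lies outside $Q_a$ and tends to $x=+\infty$.

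Second, I show that $X_0$ cannot leave $Q_a$ through the upper base or the left side and then avoid re-entering. By definition, $X_0$ and $X_1$ are consecutive components and $g$ has no pole in $(\beta_0,y_*)$. On $(\beta_0,y_*)$ the graph stays outside $Q_a$ while $g\to+\infty$. I argue by cases on the exit side.

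\begin{itemize}
\item If $X_0$ exited through $Q_a^{up}$, then $\beta_0=a$. But then $y_*>\beta_0=a$, and $X_1$ would have to lie in $\{y>a\}$, which is impossible since $X_1\subset Q_a$.
\item If $X_0$ exited through $Q_a^{left}$ or $Q_a^{low}$, the graph would have to travel from $x\le -2a$ (or from the lower side, i.e. $y=-a$, which is incompatible with $y$ increasing along the graph) to $x=+\infty$ without re-entering $Q_a$ for $y\in(-a,a)$. By continuity, on its way from $x=-2a$ to $x=+\infty$ the graph crosses every value $x\in(-2a,2a)$ at some $y<y_*$. Since there is no pole in $(\beta_0,y_*)$, the only escape is that this crossing happens at $y\ge a$, which again forces $y_*>a$, a contradiction.
\end{itemize}

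Hence $X_0$ exits through $Q_a^{right}$, which is the conclusion $g(\beta_0)=2a$.

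The symmetric argument on $(y_*,\alpha_1)$ handles $X_1$. There $g$ comes from $-\infty$, the graph is outside $Q_a$, and $y$ is increasing along the graph. Entry through the upper base is impossible. Entry through $Q_a^{low}$ would require $\alpha_1=-a<y_*$, contradicting $y_*>\beta_0\ge-a$. For entry through $Q_a^{right}$, the graph would have to cross $x\in(-2a,2a)$ before reaching $x=2a$. That crossing must then happen at $|y|\ge a$, which contradicts $-a<y_*<\alpha_1<a$. Therefore $X_1$ starts at $Q_a^{left}$.

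The main obstacle is bookkeeping rather than analysis. I need to exclude cases where the pole $y_*$ lies outside $(-a,a)$, and cases where the graph grazes a corner of $Q_a$. I would handle this by noting that $y_*\in(\beta_0,\alpha_1)\subset(-a,a)$, because both $X_0$ and $X_1$ lie in $Q_a$. Once that is in hand, the intermediate value argument above applies cleanly. Corner contacts are excluded by the strict monotonicity of $g$ between poles, since $g$ cannot be tangent to a vertical side.
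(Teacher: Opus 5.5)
Your argument is correct and is essentially the paper's proof, written out in full. The paper only says that $X_1$ cannot enter through any side other than $Q_a^{left}$ because it is not the first arc (and that $X_0$ is treated analogously); your intermediate-value argument, using $g\to+\infty$ as $y\to y_*^-$, $g\to-\infty$ as $y\to y_*^+$ and $-a<\beta_0<y_*<\alpha_1<a$, is exactly the reasoning behind that ``clearly'', and your appeal to monotonicity between poles is unnecessary (it would not even apply if $X_0$ precedes the first pole of $g$), since corners are already excluded by $\beta_0,\alpha_1\in(-a,a)$.
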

\begin{proof} The arc $X_1$ starts at $Q_a^{left}$, since it clearly cannot start at any other side of $Q_a$, being not the first arc of its intersection with the graph.  
The fact that $X_0$ ends at $Q_a^{right}$ is proved analogously. 
\end{proof}
\begin{proposition} \label{pl2} In the conditions of Lemma \ref{ml2} there exists a universal constant $\sigma_2>0$ such that
 for every $R>0$ large enough and $n$ large enough depending on $R$ the following statements hold.  There exists a  forward extension of the orbit $\mco_{n,0}$ that is bijectively projected to a segment  in the $\theta$-axis and consists of  two subsequent arcs: 
the immediate extension of the arc $\mco_{n,0}$ to an orbit in $Q_R$ ending at a point $A_n\in Q_R^{right}$; an arc $A_nA'_n$ going to a point $A'_n\in\widetilde Q_R^{left}$. One has 
\begin{equation}0< \tau(A'_n)-\tau(A_n)<\frac{\sigma_2}{R}\sqrt{\omega}.\label{taubnan}\end{equation}
\end{proposition}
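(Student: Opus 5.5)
The plan is to prove Proposition \ref{pl2} by an a priori lower bound on $f_{s;\omega}$ in the horizontal corridor between the two rectangles $Q_R$ and $\widetilde Q_R$. Along the orbit we have $d\tau/d\theta=\omega/f_{s;\omega}$. Hence, as long as $f>0$ on the arc, the arc is a graph over the $\theta$-axis and
$$\tau(A_n')-\tau(A_n)=\int\frac{\omega\,d\theta}{f_{s;\omega}(\theta,\tau(\theta))}.$$

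First I locate $A_n$. Since $\mco_{n,0}\to X_0$ and $X_0$ ends at $Q_a^{right}$ (previous proposition), and the graph of $g$ beyond $\beta_0$ goes to $+\infty$ before the pole $y_*$, the forward extension in $Q_R$ converges to the arc of $g$ from $Q_a^{right}$ to $Q_R^{right}$. So $A_n$ exists for $n$ large, with Riccati coordinates $(2R,y_1)$ and $y_1$ close to $y_*$ (bounded independently of $R$). In $(\theta,\tau)$ this means $\theta(A_n)=2R\sqrt{\omega/\alpha}$ and $|\tau(A_n)|=O(\sqrt\omega)$.

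Second, I prove the lower bound. I use the trapezoid $T=\{2R\sqrt{\omega/\alpha}\le\theta\le\pi,\ |\tau|\le K|\theta|\}$ with a small fixed $K$, together with the mirror trapezoid near $(2\pi,0)$. On $T$ I claim
$$f_{s;\omega}(\theta,\tau)\ge\sigma_1\theta^2 .$$
The argument splits into two regions.
\begin{itemize}
\item Near $O$ (say $|\theta|<\rho$ with $\rho$ fixed): the Taylor expansion gives $f=\alpha(\theta^2-\tau^2)+O(\omega)+o(\theta^2+\tau^2)$. With $K<1/2$ we get $\theta^2-\tau^2\ge\frac34\theta^2$. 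The $O(\omega)$ term is dominated because $\theta^2\ge 4R^2\omega/\alpha$ with $R$ large.
\item Away from $O$ (for $\rho\le\theta\le2\pi-\rho$): $f_{s;0}(\theta,0)>0$ by hypothesis, so by compactness $f$ is bounded below by a positive constant for $|\tau|$ small and $\omega$ small.
\end{itemize}
The same estimate holds near $(2\pi,0)$ with $\theta$ replaced by $2\pi-\theta$.

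Third, I integrate. While the arc stays in $T\cup\widetilde T$ we have $f>0$, so $\tau$ increases and the arc projects bijectively. The increment satisfies
$$\Delta\tau\le\omega\int_{2R\sqrt{\omega/\alpha}}^{\infty}\frac{d\theta}{\sigma_1\theta^2}\cdot 2+O(\omega)=\frac{\sqrt{\alpha}}{\sigma_1R}\sqrt\omega+O(\omega)<\frac{\sigma_2}{R}\sqrt\omega ,$$
where $\sigma_2$ depends only on the uniform bounds on $\alpha$ and $\sigma_1$. A bootstrap then shows the arc never leaves the region. Since $|\tau(A_n)|=O(\sqrt\omega)$ and $\Delta\tau=O(\sqrt\omega)$, we have $|\tau|\le C\sqrt\omega$ along the arc, which is $\le K\theta$ because $\theta\ge 2R\sqrt{\omega/\alpha}$ and $R$ is large. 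Consequently the arc reaches $\theta=2\pi-2R\sqrt{\omega/\alpha}$, which is the left side of $\widetilde Q_R$, at a point $A_n'$ with $\tau$ inside the range of that side. Positivity $\Delta\tau>0$ follows from $f>0$.

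The main obstacle is making the lower bound $f\ge\sigma_1\theta^2$ uniform in $R$ and $\omega$ near $\theta\sim R\sqrt\omega$. At that scale the $O(\omega)$ perturbation terms (value and gradient at $O$) compete with $\alpha\theta^2\sim R^2\omega$. I would handle this as in Proposition \ref{proapr}, by taking $R$ large first and then $\omega$ small depending on $R$.
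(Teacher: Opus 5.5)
Your proposal is correct and follows the paper's argument: you use the a priori bound $f_{s;\omega}\geq\sigma_1\theta^2$ on a thin sector outside $Q_R$ (and its mirror at $(2\pi,0)$), then integrate $d\tau/d\theta=\omega/f_{s;\omega}$ to get a $\tau$-increment of order $\sqrt{\alpha\omega}/(\sigma_1 R)$. The only difference is how you keep the orbit in the sectors and glue at $\theta=\pi$. The paper uses that the field points into $S_R$ on its lateral sides, runs the argument near $(2\pi,0)$ in backward time with a wider sector $\chi'>\chi$, and concludes from $J_R\Subset J_R'$. Your forward bootstrap $|\tau|\leq C\sqrt\omega<K\min\{\theta,2\pi-\theta\}$ reaches the same conclusion directly, and your choice of small $K$ makes the positivity of $f$ away from the flow points more explicit than the paper does.
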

 
\begin{proof} For the proof we consider the next auxiliary sector and half-plane: 
$$S=S(\chi):=\{|\tau|<\chi|\theta|\}, \ \chi\in(0,\min\{\frac12, \frac b{2\pi}\}], \ \ \ \mch:=\{\theta<\pi\}.$$
Here  $\chi$ is fixed and independent on $(s;\omega)$. 
For every $R,\omega>0$ we consider the trapezoid  
$$S_R:=(S\setminus Q_R)\cap\mch,$$
see Fig \ref{fig:sr}. Its two lateral sides lie in  $\partial S$, and its left and right vertical bases are 
 $$\mcl_R:=S\cap\{ x=2R\}\subset Q_R^{right}, \ \ \mcr_R:=S\cap\{\theta=\pi\}\Subset\rr_{\theta}\times(-b,b).$$
 The latter inclusions follow from the assumption $\chi\in(0,\min\{\frac12, \frac b{2\pi}\}]$. 
 
  In what follows we prove that for every $R$ big enough and for every $n$ large enough depending on $R$  the forward extension of the orbit $\mco_{n,0}$ eventually enters $S_R$ through $\mcl_R$ and leaves it through $\mcr_R$ in a relatively 
 small time so that the increment of the coordinate $\tau$ along its arc in $S_R$ is no greater than $\frac{\sqrt\omega}R$ 
 up to a universal constant factor. Applying the same argument to the other right flow point $(2\pi,0)$ and to the 
  inverse dynamics yields that the orbit arrives to $\widetilde Q_R$, and the total increment of $\tau$ along its arc between 
  the rectangles $Q_R$ and $\widetilde Q_R$ is bounded by a similar quantity.

  To prove the above statements, we use the next a priori bounds. 
   There exists a  $\sigma_1>0$ such that for every $R$ large enough, $\omega$ small enough depending on $R$ and 
   every $s\in U$  one has 
\begin{equation}f_{s;\omega}(\theta,\tau)>\sigma_1\theta^2 \  \ \ \text{ for every } (\theta,\tau)\in S_R.\label{ineqf1}
\end{equation}
 Indeed, this holds for $f_{s;0}$ with a certain $\sigma_1$, since the Hessian form of the function $f_{s;0}$ at $(0,0)$ is  $\alpha(s)(d\theta^2-d\tau^2)$, $\alpha(s)>0$ being bounded from above and from below by uniform constants.  Inequality 
 (\ref{ineqf1}) remains valid for $f_{s;\omega}$ with $\omega$ small enough depending on $R$, since $f_{s;\omega}(\theta,\tau)$ 
 has derivative in $\omega$  continuous in $(s,\theta,\tau)$, and hence, its module is bounded by some $\eta>0$. In more detail, 
 $\sigma_1\theta^2\geq 4\sigma_1\alpha^{-1}R^2\omega$ on $S_R$. Take a big $R$ so that $\sigma_1\alpha^{-1}R^2>\eta$ for every $s\in U$. Then  $|f_{s;\omega}(\theta,\tau)-f_{s;0}(\theta,\tau)|\leq\eta\omega<\sigma_1\alpha^{-1}R^2\omega\leq\frac{\sigma_1\theta^2}4$ on $S_R$. Hence,  (\ref{ineqf1}) holds for $f_{s;\omega}$ 
 with $\sigma_1$ replaced by 
 $\frac{\sigma_1}2$.

 \begin{claim} \label{cl11} {\it For every $\chi\in(0,\min\{\frac12, \frac b{2\pi}\}]$, $R>0$ large enough depending on $\chi$,  
 and $\omega$ small enough depending on $R$ and $\chi$  
 the restriction of the field (\ref{jos2g}) to the lateral sides of the trapezoid $S_R$ is directed inside $S_R$.}
  \end{claim}
 \begin{proof} The vectors of the field (\ref{jos2g}) at points in $\overline S_R$ are directed up-right.  
 Their slopes  have modules less than $\frac{\omega}{4\sigma_1\alpha^{-1}R^2\omega}=\frac1{4\sigma_1\alpha^{-1} R^2}$, by 
  (\ref{ineqf1}) and an inequality from its proof. The latter quantity is less than the module 
 $\chi$ of slopes of the lateral sides of $S_R$, if $R$ is big enough. Then the field is directed inside $S_R$ on its lateral sides.
 \end{proof}
 \begin{claim} \label{cl12} {\it In the above claim each orbit of (\ref{jos2g}) in $S_R$ starting at a point  $A\in\mcl_R$ arrives at 
 a point  $B\in\mcr_R$  in time 
 \begin{equation}t<\frac1{\sigma_1R\sqrt{\alpha^{-1}\omega}},\label{t<}\end{equation} 
 whenever $\omega$ is small enough depending on $R$, and one has 
 \begin{equation}0<\tau(B)-\tau(A)\leq\nu=\nu(R,\omega):=\frac{\sqrt{\alpha\omega}}{\sigma_1R}, \ \ \ \alpha:=\max_s\alpha(s).\label{tau<}\end{equation}
The above orbit is diffeomorphically projected to a segment in $\rr_\theta$.}
 \end{claim}
 \begin{proof} The fact that the orbit in question remains in $S_R$ and then arrives to $\mcr_R$ follows from 
 Claim \ref{cl11}. The upper bound of time follows by integrating the differential inequality implied by the equation 
 $\dot\theta=f_{s;\omega}(\theta,\tau)$ and  (\ref{ineqf1}): 
 $$t=\int_{\theta_0}^{\pi}\frac{d\theta}{f_{s;\omega}(\theta,\tau)}\leq\int_{\theta_0}^{\pi}\frac{d\theta}{\sigma_1\theta^2}\leq\frac1{\sigma_1}\left(\frac1{\theta_0}-\frac1{\pi}\right), \ \ \theta_0=\theta(A)=2R\sqrt{\alpha^{-1}\omega},$$
 $$t\leq\frac1{\sigma_1}\left(\frac1{2R\sqrt{\alpha^{-1}\omega}}-\frac1{\pi}\right)<\frac1{\sigma_1R\sqrt{\alpha^{-1}\omega}}\ \ \  \ \ \text{ for small } \omega.$$
 This proves (\ref{t<}), which together with the equation $\dot\tau=\omega$ implies (\ref{tau<}). The last statement of the claim follows from (\ref{ineqf1}). 
 \end{proof} 
 
  Applying the above arguments to the point $(2\pi,0)$ and  system (\ref{jos2g}) in the inverse time  
   yields similar statements for 
a sector 
$$S'=S'(\chi')=-S(\chi')+(2\pi,0), \ \ \chi'\in(0,\min\{\frac12, \frac b{2\pi}\}],$$ 
see Fig. \ref{fig:sr}. 
This is a sector directed to the left with vertex $(2\pi,0)$. By  $S_R'$ we denote the new trapezoid $S_R=-S_R+(2\pi,0)$, with $S$ replaced by $S'$ and $Q_R$ replaced by $\widetilde Q_R$. By $\mcl_R'$ we denote its right base, which lies in 
$\wt Q_R^{left}$. The constant  $\sigma_1$ corresponding to  $S'$ will be denoted by $\sigma_1'$. 
\begin{figure}
 \begin{center}
 \vspace{-0.7cm}
\epsfig{file=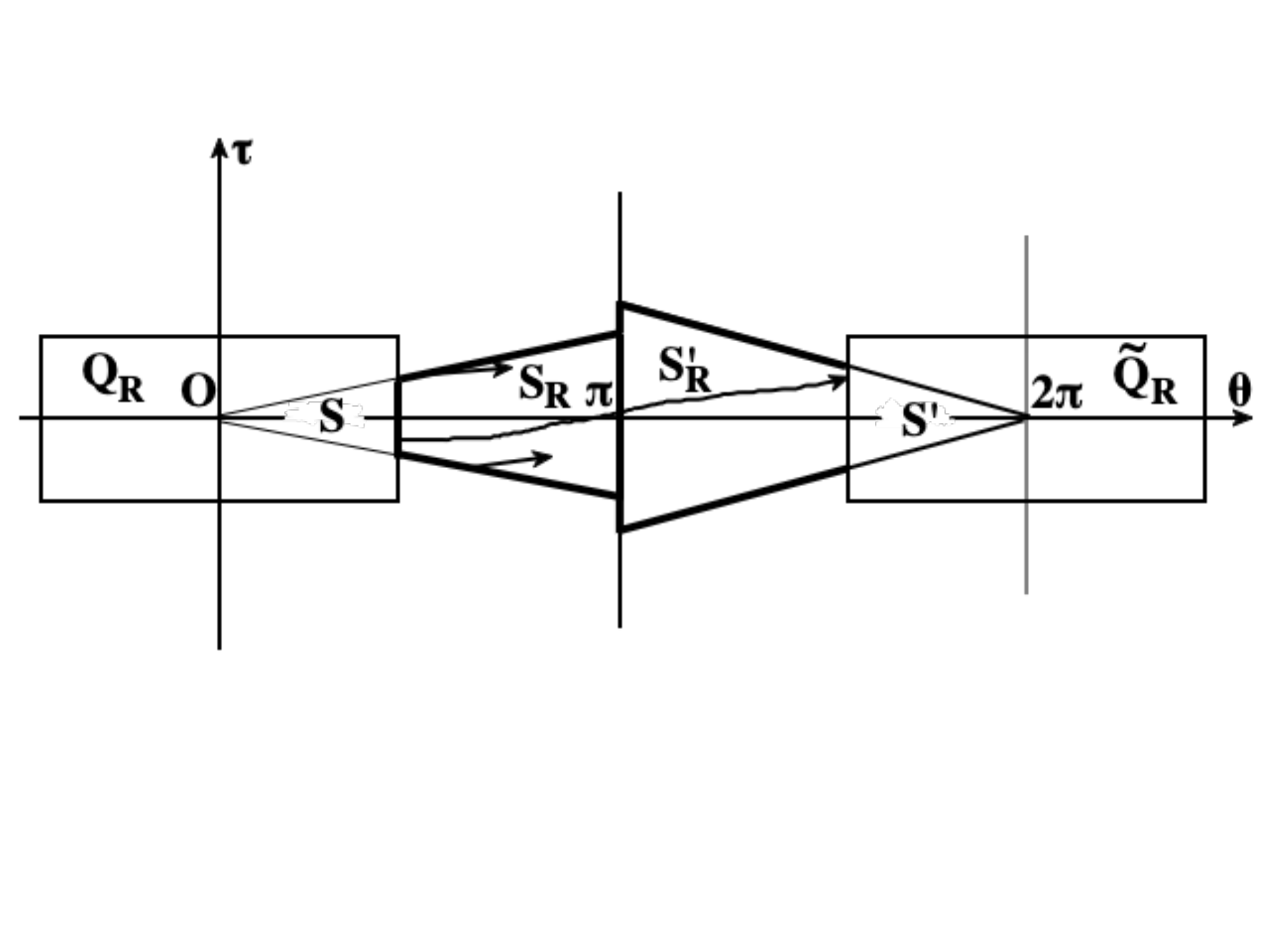, width=23em}
\vspace{-1.5cm}
\caption{The trapezoids $S_R$ and $S'_R$.}
\label{fig:sr}
  \end{center}
\end{figure} 
For the proof of Proposition \ref{pl2} we consider the  Poincar\'e maps 
$\mcp_0:\mcl_R\to\mathcal Y:=\{\theta=\pi\}$, $\mcp_1:\mcl_R'\to\mathcal Y$.  Each of them sends a given initial condition in $\mcl_R$, $\mcl_R'$
  to the point of the of first arrival to $\mathcal Y$ of the corresponding forward, respectively backward orbit of system (\ref{jos2}). They are diffeomorphisms onto segments denoted $J_R, J_R'\subset\mathcal Y$ respectively. 

\begin{claim} 
{\it If  $\chi<\chi'$,  then $J_R\Subset J_R'$ for every $R$  large enough depending on $\chi$, $\chi'$ and every $\omega$ small enough depending on 
$\chi$, $\chi'$, $R$.}
 \end{claim}
 
 \begin{proof} We identify each vertical segment with its projection to the $\tau$-axis. After this identification, each point of the segment $\mcl_R$ differs from 
 its  $\mcp_0$-image  by a quantity of module less than  $\nu=\nu(R,\omega)$, see (\ref{tau<}). 
 Similar statement holds for $\mcl_R'$ and  $\mcp_1$-image, and $\sigma_1$ replaced by $\sigma_1'$; the corresponding gap will be denoted by $\nu'$. If $\chi<\chi'$, then $\mcl_R$ lies in the interior of the segment 
 $\mcl_R'$, and the gap between them is equal to $2(\chi'-\chi)R\sqrt{\alpha^{-1}(s)\omega}$. It is clearly bigger than $4\nu$ and $4\nu'$, 
if $R$ is large enough depending on $\chi$ and $\chi'$. Then the inclusion  of the claim holds.
\end{proof}
 
 Let us return to the proof of Proposition \ref{pl2}. Fix $\chi,\chi'\in(0,\min\{\frac12,\frac b{2\pi}\}]$ with $\chi'>\chi$, e.g., with $\chi=\frac{\chi'}2$, and consider the corresponding sectors $S$ and $S'$.  
 For every  $R$ large enough and every $n$ large enough depending on  $R$ the forward extension of the arc $\mco_{n,0}$ crosses  $S_R$ by an arc going from a point 
 $A_{n,R}\in\mcl_R$ to a point $B_{n,R}\in\mcr_R\subset\mathcal Y=\{\theta=\pi\}$. Indeed, the limit of orbits $\mco_{n,0}$ is an orbit $\{ x=g(y)\}$ of the Riccati equation that goes to infinity in a finite time $y_*$. It thus crosses the interior of the side $\mcl_R$, if $R$ is big enough. Then the forward extension of the orbit $\mco_{n,0}$ also crosses $\mcl_R$ at 
 some point $A_n$, if $n$ is large enough. Hence, it arrives to some point $B_n\in\mcr_R$. The arrival point $B_n$ lies in $J_R$. We take $R$ so large that  $J_R'\Supset J_R$ for $n$ big enough depending on $R$, see the above claim. Then   the forward extension of the orbit arc $A_nB_n$  arrives at a point 
 $A_n'\in\mcl'_R$ and 
 $$\tau(A_n')-\tau(A_n)=(\tau(A_n')-\tau(B_n))+(\tau(B_n)-\tau(A_n))\leq \frac{\sigma_2}R\sqrt\omega,$$
  $\sigma_2=2\sqrt\alpha\max\{\sigma_1^{-1},\sigma_1'^{-1}\}$, 
 by (\ref{tau<}). Proposition \ref{pl2} is proved.
 \end{proof}
 
 \begin{proof} {\bf of Lemma \ref{ml2}.} Let $\mco_{n,0}$ be the orbits from the lemma in $Q_a$. Take an $R>a$ 
 large enough. Both  $\mco_{n,0}$ and its limit $X_0$ extend  to $Q_R$ as orbit arcs, and let $\mco_{n,0}$, 
 $X_{0,R}$  denote the latter extended orbits.  In the Riccati coordinates $\mco_{n,0}\to X_{0,R}$ by assumption.  
 Let, as above, $\widetilde Q_R$ denote the translation copy   
 of the rectangle $Q_R$ centered at $(2\pi,0)$. Let $A_n,A\in Q_R^{right}$ denote the endpoints of the orbits 
 $\mco_{0,n}$, $X_{0,R}$.  
 For every $n$ large enough the forward extension of the orbit $\mco_{n,0}$ contains an arc $A_nA_n'$ with 
 $A_n'\in\widetilde Q_R^{left}$, and $0<\tau(A_n')-\tau(A_n)\leq\frac{\sigma_2}{R}\sqrt{\omega}$, by Proposition \ref{pl2}. In the Riccati coordinates $(x,y)$ one has $A_n\to A$, and the sequence $A_n'$ 
 is bounded  in the Riccati coordinates $(\tilde x,\tilde y)$ centered at $(2\pi,0)$. Thus, the forward extension of the orbit $\mco_{n,0}$ intersects $\wt Q_R$ by an arc $\wt X_{1,R}^n$. Passing to  a subsequence we can and will 
 consider that $A_n'$ converge to a point $A'$ in the Riccati coordinates. Thus, $\wt X_{1,R}^n$  converge to the orbit $\wt X_{1,R}$ in $\wt Q_R$ of Riccati equation (\ref{ricric}) passing through $A'$. The point $A=(2R,y(A))$ lies in $X_{0,R}$, while $A'=(-2R,y(A'))\in\wt X_{1,R}$.
 One has 
 $0<\tilde y(A')-y(A)\leq\frac{\sigma_2}{R\sqrt{\alpha(s)}}$, by construction and the above inequality on the $\tau$-coordinates. If $R$ is large, then  $A$ is close  to the infinite point $(\infty,y_*)$ separating the arcs $X_{0,R}$ and $X_{1,R}$ in the ambient orbit of (\ref{ricric}). But then $A'$ is also close to $(\infty,y_*)$, by the above inequality on $y(A')$. 
 Thus, the limit  $\wt X_{1,R}$ is close $X_{1,R}$. Hence, $\wt X_{1,R}\cap\wt Q_a$ is close to $X_1$, if $R$ is big. Therefore, $\wt X_{1,n}\cap\wt Q_a\to X_1$, since $R$ is arbitrarily large. Lemma \ref{ml2} 
 is proved.
\end{proof}

\subsection{The parquet. Proofs of Theorems \ref{mt}, \ref{thm2} and Statement 1) of Theorem \ref{thm1}}

First we prove Theorem \ref{mt2}, for which Theorem \ref{mt} is a particular case. Then we deduce 
Theorem \ref{thm2} and 
Statement 1) of Theorem \ref{thm1}, which is its particular case, using discussion in Subsection 1.3 and the argument below.  

\begin{proof} {\bf of Theorem \ref{mt2}.}   Let us prove the statement of Theorem \ref{mt2} for the segment $I_0$ and the right flow point $z_0$ for every $\omega$ small enough depending on $m$, $k$, $\var$. Without loss of generality we consider that $z_0$ is the origin $O=(0,0)$. We fix a cylinder $S^1_{\theta}\times(-b,b)$ containing $I_0$ and $I_1$. We  
will work in the coordinates $(\wt\theta,\wt\tau)=(\theta+\mu\tau,\tau)$ 
on the latter cylinder  
in which the Hessian form of the function $f_{s;0}$ at $O$ takes the form $\alpha(s)(d\wt\theta^2-d\wt\tau^2)$, 
and from now on we will denote the latter coordinates by $(\theta,\tau)$, as in the previous subsection. Fix a rectangle 
$W=\{|\theta|<2M, \ |\tau|<M\}$ as at the beginning of Subsection 2.5 that contains the segments $I_0$ and $I_1$. Recall that the Riccati coordinates are $(x,y)=(\sqrt{\alpha\omega^{-1}}\theta,\sqrt{\alpha\omega^{-1}}\tau)$. Set 
$$Q_R=\{|x|<2R, \ |y|<R\}=\{|\theta|<2\sqrt{\alpha^{-1}\omega}R, \ |\tau|<\sqrt{\alpha^{-1}\omega}R\},$$
$$z_j=(2\pi j,0)\in\rr^2_{\theta,\tau},  \ \ Q_{R,j}:=Q_R+(2\pi j,0), \ \ W_{j}=W+(2\pi j,0), 
 \ \ j\in\zz,$$
$$(x_j,y_j)=(x(\theta-2\pi j),y) \ \text{ are Riccati coordinates on } \ Q_{R,j} \ \text{ centered at } z_j.$$

We consider that $s\in\wt U\Subset U$ is such that $c_+\in Z_{m,\var}$, $c_-\in Z_{k,\var}$. 

Everywhere below for any two values $\tau_1<\tau_2$ by $\mcp^{\tau_1,\tau_2}$ we denote the Poincar\'e map: 
the flow map in time $\frac{\tau_2-\tau_1}{\omega}$ of vector field (\ref{jos2g}) from the line $\{\tau=\tau_1\}$ to the line $\{\tau=\tau_2\}$. 

Let $\wh\tau_0$, $\wh\tau_1$ denote the values of the coordinate $\tau$ on the horizontal segments $I_0$ and $I_1$ respectively. Let $\tau_{R,+}=\sqrt{\alpha^{-1}\omega}R$ denote the value of $\tau$ on the upper side $Q_R^{up}$ 
of the rectangle $Q_R$. 

Step 1: the image $\mcp^{\wh\tau_0,\tau_{R,+}}(I_0)$ lies in the interior of the  subsegment $I_{0,R}:=(-\frac{11}{10}R,-\frac9{10}R)$ of the side 
$Q_{R,m}^{up}$ in the coordinate $x_m$, whenever $R$ is large enough and $\omega$ is small enough depending on $R$. 

To prove this, we consider the stable solution $x_-(y)$ of the Riccati equation (\ref{ricric}) corresponding to $O$. 
We consider that  $R$ is greater than the constant $a_0(m,\var)$ from Proposition \ref{cal2}: then the rectangle $Q_R$ satisfies its statements. Namely, 
let $X_0,\dots, X_m$ denote the corresponding components of the intersection of the graph of the solution $x_-(y)$ with $Q_R$. Then $X_m$ ends at a point in $Q_R^{up}$ with $x$-coordinate lying in the interval 
$(-\frac{11}{10}R,-\frac9{10}R)$. 
As $\omega$ is small enough depending on $m$, $k$, $\var$, the  intersection $\mco(I_0)\cap Q_R$ converges in the Riccati coordinates $(x,y)=(x_0,y_0)$  to $X_0$, by Lemma \ref{lconv1}. 
The  orbit $\mco(I_0)$ crosses each $Q_{R,j}$ by a family of orbit arcs that converge uniformly to 
$X_j$ in the Riccati coordinates $(x_j,y_j)$, as $\omega\to0$. This follows by Lemma \ref{ml2} and induction in $j$. 
Applying this statement to $j=m$ and taking into account that the  endpoint of the arc $X_m$ lies in $(-\frac{11}{10}R,-\frac9{10}R)$, we get that $\mcp^{\wh\tau_0,\tau_{R,+}}(I_0)\subset Int(I_{0,R})$, whenever $\omega$ is small enough. 

Step 2:  $\mcp^{\tau_{R,+},\wh\tau_1}(I_{0,R})\subset Int(I_1')=Int(I_1)+(2\pi m,0)$, whenever $R$ is large enough and 
$\omega$ is small enough depending on $R$. To prove this, we consider the upper left component  $\gamma$ of the curve  $[(C_{s;\omega}\cap W)\setminus Q_R]+(2\pi m,0)\subset W_m$.  
It starts at a point  $z\in Q_{R,m}^{up}$ lying on the left from the $y_m$-axis and crosses the interior of the segment $I_1'$ at a point $q$ uniformly bounded away from its ends: the point $q$ converges to the intersection $I_1'\cap [C_{s;0}+(2\pi m,0)]$, 
as $\omega\to0$. 
The coordinate $x_m(z)$ is $\frac R{20}$-close to $-R$, whenever $R$ is big enough and $\omega$ is small enough depending on $R$, see Proposition \ref{proapr}, Statement 3). Thus, we can and will consider that  $z\in I_{R,0}$. The curves  
$\gamma_{\pm}=\gamma\pm(\frac15\sqrt{\alpha^{-1}\omega}R,0)$ start at points $z_{\pm}\in Q_{R,m}^{up}$ and end 
at points $q_{\pm}\in I_1'$.  
 Let $\Pi_{\delta}=z_+q_+q_-z_-$ denote the curvilinear trapezoid with bases contained in $Q_{R,m}^{up}$ and $I_1'$ and lateral sides lying in $\gamma_{\pm}$. The field (\ref{jos2g}) is directed inside $\Pi_{\delta}$ on its lateral sides, 
 whenever $R$ is big enough and $\omega$ is small enough depending on $R$, as in Proposition \ref{cl1}. Its lower 
 base contains $I_{0,R}$, by construction. Therefore, the forward orbit of the segment $I_{0,R}$ exits $\Pi_{\delta}$ through 
 its upper side lying in $I_1'$. The inclusion of Step 2 is proved.
 
 The Poincar\'e map $\mcp^{\wh\tau_0,\wh\tau_1}$ is the composition of the Poincar\'e maps from Steps 1 and 2. 
 It sends $I_0$ strictly inside the segment $I_1'$ for small $\omega$, by results of theses steps. This proves Statement 1) of Theorem 
 \ref{mt2}. 
 
 To outline the next three steps of the proof, let us do the following auxiliary construction. Consider the closed singular orbit 
 $\phi$ of the slow-fast system (\ref{jos2}) on $\tt^2$ starting at $z_0$. It consists of two arcs: the arc $\phi_0$ going from 
 $z_0$ to $w_0$; the arc $\phi_1$ going from $w_0$ to $z_0$. The lifting to $\rr^2$ of the arc $\phi_0$ starting at 
 $z_0$ ends at a point $w_{-j}=w_0+(-2\pi j,0)$ for some $j\in\zz$, while the lifting of the arc $\phi_1$ starting at $w_0$ 
 ends at $z_{j,1}=z_0+(2\pi j,2\pi)$ with the same $j$. This follows from the condition of Theorem \ref{thm2}, and hence, 
 of Theorem \ref{mt2}, saying that $\phi=\phi_0\phi_1$ is homotopic to the circle $\{0\}\times S^1_\tau$. 
 
  Consider the lifting $\wt\phi_0$ of the arc $\phi_0$ to $\rr^2$ that starts at $z_m$. The arc $\wt\phi_0$  ends at the left flow point $w_{m-j}=w_0+(2\pi(m-j),0)$. Let us 
 introduce a rectangular neighborhood $\wt W$ of the 
 point $w_{m-j}$ and auxiliary horizontal segments $J_0, J_1\subset \wt W$, 
 analogous to the segments $I_0$, $I_1$. But now the interiors of the segments
 $J_0$, $J_1$ intersect the stable curves $C_{\mcl,-}(w_{m-j})$ and $C_{\mcr,+}(w_{m-j})$ respectively, instead of 
 $C_{\mcr,-}$ and $C_{\mcl,+}$ in the case of point $z_0$. Let $\wt\tau_0$, $\wt\tau_1$ denote the $\tau$-coordinates 
 of the segments $J_0$ and $J_1$. 
 
 Step 3: $\mcp^{\wh\tau_1,\wt\tau_0}(I_1')\Subset J_0$ for $\omega$ small enough. This follows by the standard slow-fast system theory. Namely, 
 the forward orbit of the segment $I_1'$ drifts along the arc $\wt\phi_0$ as a narrow flowbox that becomes 
 exponentially narrow during drifts along stable arcs of the singular orbit $\wt\phi_0$. Thus, it will enter the interior 
 of the segment $J_0$, since $\phi_0$ crosses it. 
 
 Step 4: $\mcp^{\wt\tau_0,\wt\tau_1}(J_0)\subset J_1':=J_1-(2\pi k,0)$ for $\omega$ small enough. This is proved analogously to Step 1. But now 
 the arguments are applied to the vector field family obtained from family (\ref{jos2g}) by the coordinate change 
 $\theta\mapsto-\theta$, which interchanges left flow points and right flow points.
 
 Step 5: $\mcp^{\wt\tau_1,\wh\tau_0+2\pi}(J_1')\subset I_0'=I_0+(2\pi(m-k),2\pi)$. Indeed, consider the lifting $\wt\phi_1$ 
 to $\rr^2$ starting at $w_{m-j-k}$ of the arc $\phi_1$. It ends at $z_{m-k,1}=z_{m-k}+(0,2\pi)$ and crosses $Int(I_0')$, by the above discussion and construction. 
 The forward orbit of the segment $J_1'$ drifts along the arc $\wt\phi_1$ and enters the interior of the segment 
 $I_0'$, as in Step 3. This proves the above inclusion. 
 
 The Poincar\'e map $\mcp^{\wh\tau_1,\wh\tau_0+2\pi}$ is the composition of the Poincar\'e maps from Steps 3--5. 
 This together with results of Steps 3--5 implies that it sends $I_1'$ strictly inside the segment  $I_0'$ and finishes the proof of Theorem \ref{mt2}.
  \end{proof}
 
  \begin{proof} {\bf of Theorem \ref{thm2}.} Let $\wt U\subset U$ be an arbitrary compact subset of the $s$-parameter space $U$. For every $m,k\in\zz_{\geq0}$, $\var\in(0,1)$, $\omega$ small enough depending on $\wt U$, $m$, $k$, $\var$,  for every $s\in\wt U$ with 
  $\sigma(s)=(c_+(s), c_-(s))\in Z_{m,\var}\times Z_{k,\var}$  
  the Poincar\'e map $\mcp^{\tau(b_0),\tau(b_0)+2\pi}$ sends $I_0$ to $I_0'=I_0+(2\pi r,2\pi)$, $r=m-k$, by Statements 1) and 2) of 
  Theorem \ref{mt2}. This implies that after the above shift identification $I_0'\simeq I_0$ the Poincar\'e map becomes a self-map $I_0\to I_0$. Hence, it has a fixed point. This implies that corresponding flow has a  $2\pi$-periodic orbit intersecting $I_0$ with rotation number $r$, and hence, $s$ lies in the phase-lock area $L_r(\omega)$.  Thus, $\sigma^{-1}(Z_{m,\var}\times Z_{k,\var})\cap\wt U\subset L_r(\omega)$.  Now for the proof of Theorem \ref{thm2} it   remains to show that $L_r(\omega)\cap \wt U$ accumulates to no point $z\in \wt U\setminus\cup_{\mu-\kappa=r}\sigma^{-1}(\overline Z_{\mu}\times \overline Z_{\kappa})$. That is the place where we use submersivity of the map 
 $\sigma$, its local non-constance in some variable $s_j$, say, $s_1$, and monotonicity of the function $f_{s;\omega}(\theta,\tau)$  in the same variable $s_1$ for small $\omega$.  The parameter space $U$ is the disjoint union of open sets $\sigma^{-1}(Z_\mu\times Z_\kappa)$, 
 $\mu,\kappa\in\zz_{\geq0}$, and their boundaries $\sigma^{-1}(\partial(Z_\mu\times Z_\kappa))$, since $\sigma$ is a submersion. Suppose the contrary: an accumulation point $z\in\wt U$ 
as above exists. It cannot lie in  $\sigma^{-1}(Z_\mu\times Z_\kappa)$ with $\mu-\kappa\neq r$, 
since $\sigma^{-1}(Z_{\mu,\var}\times Z_{\kappa,\var})\subset L_{r'}(\omega)$, $r'=\mu-\kappa\neq r$ for arbitrarily small $\var>0$ for every $\omega$ small enough depending on $\var$, and the phase-lock areas $L_r(\omega)$ and $L_{r'}(\omega)$, $r\neq r'$, are disjoint. Thus, the only a priori possible case is when $z=\sigma^{-1}(w)$, where $w$ lies in the boundary of several subsets $Z_\mu\times Z_\kappa$, and for all the latter subsets adjacent to $w$ one has 
  $\mu-\kappa\neq r$.  Let us show that this is impossible.

Fix a  segment $[\alpha_1,\alpha_2]\subset\rr_{s_1}$ of small length $\delta=\alpha_2-\alpha_1>0$, centered at $s_1(z)=\frac{\alpha_1+\alpha_2}2$. Let $[A_1,A_2]$ denote the segment 
  through $z$ parallel to the $s_1$-axis and projected to $[\alpha_1,\alpha_2]$.  Passing to a sequence of values 
  $\delta=\delta_n\to0$, one can achieve that $c_+(A_j)$, $c_-(A_j)$ are non-integer and thus, lie in some sets $Z_{\mu_j}$, 
  $Z_{\kappa_j}$, by local non-constance of $c_{\pm}$ in $s_1$, see condition (vi) of Theorem \ref{thm2}.  Moreover, passing to a subsequence we can and will consider that $\mu_j$, $\kappa_j$, and hence $r_j=\mu_j-\kappa_j$ remain 
  the same: independent on $n$.    Passing to limit, we get that $r_j\neq r$, since $z\notin\overline L_r^0$. 
 The domains $Z_{\mu_j}\times Z_{\kappa_j}$ are thus 
 adjacent to each other at $w$. This implies that either $|r_1-r_2|=1$, or $|r_1-r_2|=2$ and 
 $w$ also lies in the boundary of another domain $Z_{\mu}\times Z_{\kappa}$ with $\mu-\kappa=r':=\frac{r_1+r_2}2$. 
By assumption, $w$ is not adjacent to $Z_{\mu}\times Z_{\kappa}$ with $\mu-\kappa=r$. This together with 
the above statements implies that $r\notin[r_1,r_2]$. 
  
  Fix  a $\delta=\delta_n$ and $\mu_j$, $\kappa_j$, $r_j$ as above. 
  Take a small neighborhood $W$ of the projection 
  of the point $z$ to the $(s_2,\dots,s_n)$-subspace. For every $\xi\in W$ let $[A_{\xi,1},A_{\xi,2}]$ denote the segment 
  in the $s$-space parallel to the $s_1$-axis, projected to $[\alpha_1,\alpha_2]$ and centered at $(s_1(z),\xi)\in\rr^n_s$. The intervals $(A_{\xi,1},A_{\xi,2})$ saturate a neighborhood of the point $z$ in $\rr^n_s$, which will be denoted by $V$. Taking $W$ small depending on $z$ and  $\var>0$ small depending on 
  $z$, $\delta$, $W$ one can achieve that 
  $A_{\xi,j}\in\sigma^{-1}(Z_{\mu_j,\var}\times Z_{\kappa_j,\var})$ for all $\xi\in W$. Then one has $A_{\xi,j}\in L_{r_j}(\omega)$ for all $\omega$ small enough and all $\xi\in W$, since 
  $\sigma^{-1}(Z_{\mu_j,\var}\times Z_{\kappa_j,\var})\subset L_{r_j}(\omega)$ for small $\omega$, see the beginning of the proof of Theorem \ref{thm2}. Thus,  $\rho(A_{\xi,j};\omega)=r_j$ 
  for every $\omega$  less than some value $\omega_0>0$. But the rotation number is monotonous in $s_1$, as is $f_{s;\omega}$. Therefore, 
$\rho([A_{\xi,1},A_{\xi,2}]\times(0,\omega_0))\subset[r_1,r_2]$ for every $\xi\in W$. 
  Thus, for every $\omega\in(0,\omega_0)$ the rotation number function $\rho(s)$ restricted to $V$ takes values 
  in the segment $[r_1,r_2]$, which is at least 1-distant from the number $r$. This contradicts to the assumption that 
  $L_r(\omega)$ accumulates to $z$, as $\omega\to0$. The contradiction thus obtained proves Theorem \ref{thm2}.
\end{proof}

Statement 1) of Theorem \ref{thm1} follows from Theorem \ref{thm2} proved above: in its conditions  
$(s_1,s_2)=(\ell,u)$,  $c_\pm=u\pm\ell$ are strictly monotonous in $\ell$ and $f_{\ell,u;\omega}(\theta,\tau)=\cos\theta+\ell\omega+(1+u\omega)\cos\tau$ is monotonous in $\ell$.

 \subsection{Vertices as limits of constrictions. Proofs of Proposition \ref{adcomp} and Statement 2) of Theorem \ref{thm1}}
 \begin{proof} {\bf of Proposition \ref{adcomp}.} As we stay in a connected component of a phase-lock area, the coordinate of the attractor of the Poincar\'e map depends analytically on the parameters by the Implicit Function Theorem. It  crosses neither $0$, nor $\pi$, since otherwise the Poincar\'e map would degenerate to either a parabolic map, or the identity, by 
 symmetry $(\theta,\tau)\mapsto(-\theta,\tau)$ of system (\ref{jos2}). This is impossible for 
 a point from the interior of the phase-lock area, since one can perturb a parabolic Poincar\'e map to elliptic by 
 either increasing, or decreasing the parameter $\ell$. This follows by monotonicity in $\ell$ of system (\ref{jos2}). 
 Therefore, it suffices to consider a vertex $(r,u_0)$, $u_0=r+2k-1$, $r\in\zz_{\geq0}$, $k\in\nn$, and the case, when $\psi_1=(r,u_0-\var)$, $\psi_2=(r,u_0+\var)$ with small $\var$ and $\omega$ is small depending on $r$ and $\var$. The segment $I_0$ contains an attractor $a(\omega)$ of the Poincar\'e map $h_{I_0}:I_0\to I_0$ for both parameter values $\psi_1$, $\psi_2$, and the corresponding rotation number is equal to $r$, whenever $\omega$ is small enough, by Corollary \ref{corparq} of Theorem \ref{mt}. The Poincar\'e map  $h:S^1_\theta\times\{0\}\to S^1_\theta\times\{0\}$, which is the flow map in time $\frac{2\pi}{\omega}$, is conjugated to  $h_{I_0}$ via the flow map sending $I_0$ to the $\theta$-circle. Therefore, the attractor  of the Poincar\'e map $h$ is the  intersection $\beta(\omega)$ of the forward orbit $\mco(a(\omega))$  with the $\theta$-circle. In the Riccati  coordinates centered at the right flow point $(\pi,0)$ the latter forward orbit lifted to $\rr^2$ converges to the graph of the stable solution $x_-(y)$ of the Riccati equation with $c=c_+=\ell+u$, by Lemma \ref{lconv1}. Its intersections with $O(\sqrt\omega)$-neighborhoods of the other right flow points converge to graphs  of restrictions of the same solution $x_-$ to successive disjoint intervals, any two neighbor interval pair being separated by a pole of  $x_-$, by Lemma \ref{ml2}.  In our case, for $\ell=r$ and $u=u_0=r+2k-1$, one has $c_+=u+\ell=2m-1$, $m=k+r>0$, the graph of solution $x_-$ meets the abscissa $x$-axis at $0$, if $m$ is odd, and at $\infty$, if $m$ is even. 
 Let us treat the case of odd $m$; the case of even $m$ is treated analogously. Then as $u=u_0\pm\var$ with small $\var$, the graph of solution $x_-(y)$ meets the abscissa axis at a point with small coordinates $d_{\pm}$, $d_-<0<d_+$, 
 see Proposition \ref{pmon}, thus lying in different intervals $(0,\pm\pi)$. Proposition \ref{adcomp} is proved. 
 \end{proof}
 
 \begin{proof} {\bf of Statement 2) of the Parquet Theorem.} Fix a vertex $X=(r,u_0)$,  $u_0=r+2k-1$, $k\in\nn$, and a small 
 $\var>0$ such that $(r,u_0\pm\var)$ lie in two distinct components of the limit domain $L_r^0$ adjacent to $X$ from above and from below. Then for every fixed $\omega\neq0$ small enough 
the  attractors of the Poincar\'e maps $h:S^1_\theta\times\{0\}\to S^1_\theta\times\{0\}$  corresponding to system (\ref{jos2}) with $(\ell,u)=(r,u_0\pm\var)$  lie in different half-circles $(0,\pm\pi)$, by Proposition \ref{adcomp}. Therefore, there exists 
a $u=u(\omega)\in(u_0-\var,u_0+\var)$ such that  the Poincar\'e map corresponding to the parameter 
$(r,u(\omega))$ has a fixed point either at $0$, or at $\pi$. Thus, it is either parabolic, or the identity, due to the symmetry 
$(\theta,\tau)\mapsto(-\theta,-\tau)$ of system (\ref{jos2}), see \cite{RK}, which fixes  the points $(0,0)$, $(\pi,0)$ and  conjugates the Poincar\'e map with its inverse. Hence, $(r,u(\omega))$ lies in the boundary of a phase-lock area, whenever $\omega$ is small enough. Thus, for arbitrarly given small $\var$ 
we found that for every $\omega$ small  enough there is a boundary point $(r,u(\omega))$ that is $\var$-close to $(r,u_0)$. Let us show that it is a constriction of the phase-lock area $L_r(\omega)$. Indeed, for every $\omega$ small enough the ray issued vertically up from the point $(r,r-\frac12)$ lies in $L_r(\omega)$, by  \cite[lemma 5.1]{bibgl}. 
The only points of boundaries of phase-lock areas that lie in the interior of the latter ray are constrictions of the phase-lock area $L_r(\omega)$, by the same lemma and its proof given in loc. cit. 
The boundary point $(r,u(\omega))$ in question lies there, since 
 $u(\omega)>u_0-\var=r+2k-1-\var>r$. Hence, it is a constriction. Statement 2) is proved. The proof of the Parquet Theorem is complete. 
\end{proof}
\section{Acknowledgements} 
I am grateful to V.M.Buchstaber for statement of the problem on asymptotics of the phase-lock areas in the RSJ model 
and for helpful discussions. I am very grateful to A.A.Alexandrov for numerous helpful discussions and for 
 Figures 3 of the phase-lock areas. I am grateful to Yu.S.Ilyashenko, J.-P.Ramis, V.A.Kleptsyn, I.V.Schurov, A.S.Gorsky, Ya.V.Fominov  for helpful discussions.

\end{document}